\def\N{\mathbb N}
\def\*  C{{*  \mathcal C}}
\newcommand{\Ker}{{\rm Ker}\,}
\newcommand{\Rad}{{\rm Rad}}
\newcommand{\ord}{{\rm ord}}
\long\def\alert#1{\smallskip{\hskip\parindent\vrule%
\vbox{\advance\hsize-2\parindent\hrule\smallskip\parindent.4\parindent%
\narrower\noindent#1\smallskip\hrule}\vrule\hfill}\smallskip}
\newcommand{\Cc}{\mathcal{C}}
\def\*  C{{}*  \hspace*  {-1pt}{\Cc}}
\def\text#1{{\rm {\rm #1}}}
\newtheorem{prop}{Proposition}[section]
\newtheorem{lemma}[prop]{Lemma}
\newtheorem{cor}[prop]{Corollary}
\newtheorem{theo}[prop]{Theorem}
\theoremstyle{definition}
\newtheorem{Def}[prop]{Definition}
\newtheorem{ex}[prop]{Example}
\newtheorem{rem}[prop]{Remark}
\newtheorem{opprob}[prop]{Open problem}
\title{State BL-algebras}
\author{Lavinia Corina Ciungu$^1$, Anatolij Dvure\v{c}enskij$^2$, Marek Hy\v{c}ko$^2$}
\keywords{BL-algebra, MV-algebra, state, state BL-algebra,
state-operator, strong state-operator, state-morphism-operator}
\subjclass[2000]{06D72, 03G12}
\thanks{The first author thanks SAIA for the fellowship in Slovakia, Summer
 2009, two others are thankful for the support by Center of
Excellence SAS -~Quantum Technologies~-,  ERDF OP R\&D Project CE
QUTE ITMS  26240120009, the grants VEGA Nos. 2/0032/09, 2/7142/27
SAV and by the Slovak Research and Development Agency under the
contract No. APVV-0071-06, Bratislava.}
\begin{document}
\begin{abstract}
The concept of a state MV-algebra was firstly introduced by Flaminio
and Montagna in \cite{FlMo0} and \cite{FlMo} as an MV-algebra with
internal state as a unary operation. Di Nola and Dvure\v{c}enskij
gave a stronger version of a state MV-algebra in \cite{DiDV1},
\cite{DiDV2}. In the present paper we introduce the notion of a
state BL-algebra, or more precisely, a BL-algebra with internal
state.  We present different types of state  BL-algebras, like
strong state BL-algebras and state-morphism BL-algebras,  and we
study some classes of state BL-algebras. In addition, we give a sample of important examples of state BL-algebras and present some open problems.
\end{abstract}

\maketitle

\section{Introduction}%1

BL-algebras were introduced in Nineties by P. H\'ajek  as the
equivalent algebraic semantics for its basic fuzzy logic (for a
wonderful  trip through fuzzy logic realm,  see \cite{Haj1}). They
generalize theory of MV-algebras that is the algebraic semantics of
\L ukasiewicz many valued logic that was introduced in Fifties by
C.C. Chang \cite{Cha}. 40 years after appearing BL-algebras, D.
Mundici \cite{Mu} presented an analogue of probability, called a
state, as averaging process for formulas in \L ukasiewicz logic. In
the last decade, theory of states on MV-algebras and relative
structures is intensively studied by many authors, e.g. \cite{KuMu,
Kro, DvRa, DvuRac, Geo1, Rie} and others.

A new approach to states on MV-algebras was presented by T. Flaminio
and F. Montagna in \cite{FlMo0} and \cite{FlMo}; they added a unary
operation, $\sigma,$ (called as an inner state or a state-operator)
to the language of MV-algebras, which preserves the usual properties
of states.  It presents a unified approach to states and
probabilistic many valued logic in a logical and algebraic settings.
For example, H\'ajek's approach, \cite{Haj1}, to fuzzy logic with
modality $\mbox{Pr}$ (interpreted as {\it probably}) has the
following semantic interpretation: The probability of an event $a$
is presented as the truth  value of $\mbox{Pr}(a)$.

A. Di Nola and A. Dvure\v censkij gave in \cite{DiDV1} a stronger
version of state MV-algebras, namely state-morphism MV-algebras. In
particular, they completely described subdirectly irreducible
state-morphism MV-algebras. Such a description of only state
MV-algebras is yet unknown \cite{FlMo0, FlMo}.  And in \cite{DiDV2},
they described some types of state-morphism MV-algebras. In the
paper \cite{DDL}, the authors studied some subvarieties of state
MV-algebras, and they showed that any state MV-algebra whose
MV-reduct belongs to the variety $MV_n$ of MV-algebras  generated by
simple MV-chains $S_1,\ldots,S_n$ ($n\ge 1$), is always a
state-morphism MV-algebra.

In the present paper, we extend the definitions of state MV-algebras
and state-morphism MV-algebras to the case of BL-algebras and we
generalize the properties of the state-operator to this case.
Besides state-operators, we define strong state operators that in
the case of MV-algebras are identical with state-operators, and also
morphism-state-operators as state-operators preserving $\odot.$ To
illustrate these notions, we present some important examples of
state BL-algebras. We also study some classes of state-morphism
MV-algebras such as simple, semisimple, perfect and local
state-morphism MV-algebras, using the radical under a
state-morphism-operator and its properties. We show that under some
conditions, states and extremal states on the image of the
state-operator are in a one-to-one correspondence to states and
extremal states on the associated BL-algebra.

The paper is organized as follows: Section 2 recalls basic notions
and some results of BL-algebras and their classes which will be used
later in the paper. In Section 3 we define a state-operator and a
strong state-operator for a BL-algebra and prove some of their basic
properties. Section 4 gives   a  sample of important illustrative
examples, including BL-algebras corresponding some basic continuous
t-norms (like \L ukasiewicz, G\"odel and product). We show that if a
BL-algebra is linear, then every state-operator $\sigma$ is
necessarily an endomorphism such that $\sigma^2 = \sigma.$ Section 5
deals with state-filters and congruences. We show that subdirectly
irreducible state BL-algebras are not necessarily linear, and we
show some properties of radicals. In Section 6 we present  relations between states on BL-algebras and state-operators.  Finally, in Section 7 different
classes of state-morphism BL-algebras are presented, such as simple,
semisimple, perfect and local state-morphism BL-algebras. In
addition, we present some open problems.

\section{Elements of BL-algebras}%2

In the present section, we gather  basic definitions and properties
on BL-algebras for reader's convenience.

\begin{Def}(\cite{Haj1})\label{def:BL}
A BL-algebra is an algebra $(A, \wedge, \vee, \odot, \rightarrow,
0,1)$ of the type $\langle 2,2,2,2,0,0\rangle$ such that $(A,
\wedge,\vee,0,1)$ is a bounded lattice, $(A,\odot,1)$ is a
commutative monoid,
and for all $a,b,c \in A,$\\
$(1)$ $c\leq a\rightarrow b$ iff $a\odot c\leq b;$\\
$(2)$ $a\wedge b=a\odot (a\rightarrow b);$\\
$(3)$ $(a\rightarrow b)\vee (b\rightarrow a)=1.$
\end{Def}

Let $a \in A$, we set $a^0:=0$ and $a^n :=a^{n-1}$ for any integer
$n\ge 1.$  If there is the least integer $n$ such that $a^n = 0,$ we
set $\ord(a) = n,$  if there is no such an integer, we set
$\ord(a)=\infty.$

The following well-known properties of BL-algebras will be used in
the sequel.

\begin{prop}\label{4}
Let $A$ be a BL-algebra. Then\\
$(1)$ if $a\leq b$ and $c\leq d$ then $a\odot c\leq b\odot d;$ \\
$(2)$ if $a\leq b$ then $c\rightarrow a\leq c\rightarrow b;$\\
$(3)$ $a\rightarrow b^-=(a\odot b)^-;$\\
$(4)$ $a\rightarrow a\wedge b = a\rightarrow b;$\\
$(5)$ $a\to b \le a\odot c \to b\odot c;$\\
$(6)$ $a\to (b\to c) = (a\odot b)\to a.$
\end{prop}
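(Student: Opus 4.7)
The plan is to reduce all six assertions to the adjunction $c\le a\to b\iff a\odot c\le b$ from item (1) of Definition \ref{def:BL}, supplemented by $a\wedge b=a\odot(a\to b)$ from item (2) together with commutativity and associativity of $\odot$. The prelinearity axiom (3) is not needed. I also read $b^-$ in (3) as $b\to 0$ in the usual way.

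For part (1), from $b\odot c\le b\odot c$ adjunction gives $b\le c\to(b\odot c)$; if $a\le b$, transitivity and a second use of adjunction yield $a\odot c\le b\odot c$, and a symmetric argument for the right-hand factor together with transitivity of $\le$ delivers $a\odot c\le b\odot d$. Part (2) is immediate: setting $x:=c\to a$ one has $c\odot x\le a\le b$, hence $x\le c\to b$. Part (5) is equally brief: with $x:=a\to b$ the adjunction gives $a\odot x\le b$, so $(a\odot c)\odot x=(a\odot x)\odot c\le b\odot c$ by (1), and one more application of adjunction yields $x\le(a\odot c)\to(b\odot c)$.

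The main step is the residuation-exponential law underlying (6): applying adjunction twice,
\[
x\le a\to(b\to c)\iff a\odot x\le b\to c\iff (a\odot b)\odot x\le c\iff x\le(a\odot b)\to c,
\]
where commutativity is used to merge $b\odot(a\odot x)$ into $(a\odot b)\odot x$; substituting $x$ by either side of (6) gives the equality. (The printed right-hand side ``$(a\odot b)\to a$'' I take to be a typographical slip for $(a\odot b)\to c$.) Specialising $c:=0$ and recalling $b^-=b\to 0$ yields (3) at once. For (4), the inequality $a\to(a\wedge b)\le a\to b$ follows from $a\wedge b\le b$ via part (2); for the reverse, set $x:=a\to b$, so that $a\odot x\le b$ by adjunction and $a\odot x\le a\odot 1=a$ by part (1), whence $a\odot x\le a\wedge b$ and adjunction returns $x\le a\to(a\wedge b)$.

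There is no real obstacle: the only care required is bookkeeping around the commutativity and associativity of $\odot$ in (6), and the recognition that (3) is just a specialisation of (6) rather than needing an independent argument. Everything else is the standard residuated-lattice toolkit applied directly to the adjunction.
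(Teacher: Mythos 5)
Your proof is correct. The paper itself gives no argument for this proposition --- it is quoted as a list of well-known BL-algebra identities and used without proof --- so there is nothing to compare against; your derivation of all six items from the residuation adjunction $c\leq a\rightarrow b \iff a\odot c\leq b$ (with (3) obtained as the $c=0$ specialisation of (6), and (4) via $a\odot(a\to b)\le a\wedge b$) is the standard one and each step checks out, including your reading of the right-hand side of (6) as $(a\odot b)\to c$, which is indeed a typographical slip in the statement.
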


We define the following operations known in any BL-algebra $A$:\\
$x\oplus y:=(x^-\odot y^-)^-,$ $x\ominus y:=x\odot y^-$ and
$d(x,y):=(x\rightarrow y)\odot (y\rightarrow x)$ for any $x,y \in
A$.

We recall a few definitions of states that we will use in the next
sections. We note that a state is an analogue of averaging process
for formulas in \L ukasiewicz logic \cite{Mu} or in fuzzy logic
\cite{Geo1, Rie}.

According to \cite{Geo1}, we say that
a {\it Bosbach state} on $A$ is a function $s:A\rightarrow [0,1]$ such that the following conditions hold:\\
$(BS1)$ $s(x)+s(x\rightarrow y)=s(y)+s(y\rightarrow x), $ $x,y \in A;$\\
$(BS2)$ $s(0)=0$ and $s(1)=1.$

For another notion of a state given in \cite{Rie}, we introduce a
partial relation $\perp$ as follows: We say that two elements $x,y
\in A$ are said to be {\it orthogonal} and we write $x\perp y$ if
$x^{--}\leq y^-.$ It is simple to show that $x\perp y$ iff $x\le
y^-$ and iff $x\odot y=0.$ It is clear that $x\perp y$ iff $y \perp
x,$ and $x\perp 0$ for each $x \in A.$

For two orthogonal elements $x,y$ we define a partial binary
operation, $+,$ on $A$ via $x+y:=y^-\rightarrow
x^{--}(=x^-\rightarrow y^{--}).$

A function $s:A\rightarrow [0,1]$ is called a {\it Rie\v can state} if the following conditions hold:\\
$(RS1)$ if $x\bot y,$ then $s(x+y)=s(x)+s(y);$\\
$(RS2)$ $s(0)=0.$

As it was shown in \cite{DvRa}, every Bosbach state is a Rie\v{c}an
state and vice versa, therefore for the rest of the paper a Bosbach
state or a Rie\v{c}an state will be  shortly called a {\it  state}.
We denote by ${\mathcal S}(A)$ the set of all states on $A.$ We
recall that ${\mathcal S}(A)$ is always non-void, see Remark
\ref{re:maxim} below.

We remind  that a net of states, $\{s_\alpha\},$ {\it converges
weakly} to a state, $s,$ if $\lim_\alpha s_\alpha(x)=s(x)$ for every
$x \in A.$ In addition, if $s$ is a state, then $\Ker(s):=\{x\in
A\mid s(x)=1\}$ is a filter.

A {\it state-morphism} on $A$ is a function $m:A\rightarrow [0,1]$ satisfying:\\
$(SM1)$ $m(0)=0;$\\
$(SM2)$ $m(x\rightarrow y)=\min\{1-m(x)+m(y),1\},$\\
for any $x,y$ in $A.$

We note that by \cite{Geo1}, every state-morphism is a state.

A state $s$ on $A$ is called an {\it extremal state} if for any
$0<\lambda<1$ and for any two states $s_1, s_2$ on $A,$ $s=\lambda
s_1+(1-\lambda) s_2$ implies $s_1=s_2=s.$  By $\partial_e{\mathcal
S}(A)$ we denote the set of all extremal states. Due to the
Krein--Mil'man theorem, \cite[Thm 5.17]{Goo}, every  state on $A$ is
a weak limit of  a net of convex combinations of extremal states.

\begin{theo}$\rm($\cite{Geo1, DvRa}$\rm)$\label{13}
Let $m:A\rightarrow [0,1]$ be a state. Then $m$ is an extremal state
iff $m(x\vee y)=\max\{m(x), m(y)\}$ for any $x,y$ in $A$ iff
$m(x\odot y)=m(x)\odot m(y):= \min\{m(x)+m(y)-1,0\}$ for any $x,y$
in $A$ iff $m$ is a state-morphism iff $\Ker(m)$ is a maximal
filter.
\end{theo}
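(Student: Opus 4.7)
The plan is to prove the five conditions equivalent by organizing them around (d) and establishing (d)$\Leftrightarrow$(b), (d)$\Leftrightarrow$(c), (d)$\Leftrightarrow$(e), and (a)$\Leftrightarrow$(d). The algebraic implications will be short manipulations based on the Bosbach identity (BS1) together with the BL-identities $(x\to y)\vee(y\to x)=1$ and $x\wedge y=x\odot(x\to y)$. The workhorse observation is the always-valid formula $m(u\wedge v)=m(u)+m(u\to v)-1$, obtained by applying (BS1) to the pair $(u\wedge v, u)$ and using Proposition~\ref{4}(4).

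For (d)$\Rightarrow$(b) I would use the standard BL-identity $x\vee y=((x\to y)\to y)\wedge((y\to x)\to x)$: (SM2) together with a case split on whether $m(x)\ge m(y)$ shows that both arguments of the outer meet have $m$-value $\max\{m(x),m(y)\}$, and then the meet formula above forces $m(x\vee y)=\max\{m(x),m(y)\}$. For (b)$\Rightarrow$(d), applying (b) to $(x\to y)\vee(y\to x)=1$ gives $\max\{m(x\to y),m(y\to x)\}=1$, and whichever of the two equals $1$, the Bosbach balance pins the other down to $1-|m(x)-m(y)|$, which is exactly (SM2). The equivalence (c)$\Leftrightarrow$(d) is parallel, using $x\wedge y=x\odot(x\to y)$ to translate multiplicativity on $\odot$ into the state-morphism formula for $\to$. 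For (d)$\Leftrightarrow$(e): a state-morphism factors as $A\twoheadrightarrow A/\Ker(m)\hookrightarrow[0,1]$ where $[0,1]$ is a simple BL-chain, forcing $\Ker(m)$ to be maximal; conversely, if $\Ker(m)$ is maximal then $A/\Ker(m)$ is a simple BL-algebra, and every state on a simple BL-chain is the inclusion, so $m$ is a state-morphism.

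The main obstacle is the convex-analytic equivalence (a)$\Leftrightarrow$(d). The direction $\Leftarrow$ is clean: if $m$ is a state-morphism and $m=\lambda s_1+(1-\lambda)s_2$, then $m(x)=1$ forces $s_1(x)=s_2(x)=1$ for every $x\in\Ker(m)$; the $s_i$ therefore descend to states on the simple quotient $A/\Ker(m)\subseteq[0,1]$, which admits only the inclusion as a state, hence $s_1=s_2=m$. The forward direction $\Rightarrow$ proceeds by contrapositive: if $\Ker(m)$ is not maximal, then two distinct maximal filters extending $\Ker(m)$ yield two distinct state-morphisms whose convex combination recovers $m$, contradicting extremality. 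I would refer to \cite{Geo1, DvRa} for the explicit Choquet-style construction of this decomposition, which constitutes the main technical content of the theorem.
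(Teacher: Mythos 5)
First, a framing remark: the paper does not prove Theorem \ref{13} at all --- it is imported from \cite{Geo1, DvRa} --- so there is no internal argument to compare yours against, and your attempt has to stand on its own. Your algebraic equivalences among (b), (c), (d) and (e) are essentially sound: the valuation-type identities you extract from (BS1) (in particular $m(u\wedge v)=m(u)+m(u\to v)-1$ and, taking $v=0$, $m(z^-)=1-m(z)$), together with prelinearity, do give (b)$\Leftrightarrow$(d), and your treatment of (d)$\Leftrightarrow$(e) via the simple quotient $A/\Ker(m)\hookrightarrow[0,1]$ is correct, modulo quoting the standard fact that the only state on a subalgebra of the standard MV-algebra $[0,1]$ is the inclusion. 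One soft spot: for (c)$\Rightarrow$(d) the identity $x\wedge y=x\odot(x\to y)$ combined with the workhorse formula only yields $m(x)+m(x\to y)\ge 1$ for all $x,y$, which is strictly weaker than (SM2), and since $y^{--}\ne y$ in a general BL-algebra you cannot reach all pairs by the negation trick either. The clean repair is (c)$\Rightarrow$(e): from $m(x^n)=\max\{n\,m(x)-(n-1),0\}$ one gets $m((x^n)^-)=1$ for some $n$ whenever $m(x)<1$, so $\Ker(m)$ is maximal by Proposition \ref{27}, and then (e)$\Rightarrow$(d) as you argue.

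The genuine gap is the direction (a)$\Rightarrow$(e) (extremal implies maximal kernel), which is the real content of the theorem. Your contrapositive sketch fails for two reasons. First, a non-maximal proper filter need not be contained in two \emph{distinct} maximal filters (in a local BL-algebra every proper filter sits under the unique maximal one), so the two state-morphisms you want need not exist. Second, and more seriously, even when $F_1\ne F_2$ are maximal filters containing $\Ker(m)$, there is no reason that a convex combination $\lambda m_1+(1-\lambda)m_2$ of the induced state-morphisms should \emph{equal} $m$: a priori an extremal state could be an integral over a continuum of state-morphisms rather than a combination of two, and ruling this out is exactly the hard part. The standard proofs either run a genuine Choquet/Krein--Mil'man argument on the state space, or pass through the unit interval of an $\ell$-group and invoke Goodearl's characterization of extremal states, or show directly that a state violating $m(x\vee y)=\max\{m(x),m(y)\}$ admits an explicit nontrivial convex splitting. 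Deferring this to \cite{Geo1, DvRa} is legitimate for a cited theorem, but as a proof your write-up is incomplete precisely where the theorem is nontrivial. (Minor point: the expression $\min\{m(x)+m(y)-1,0\}$ in the statement is a typo for $\max\{m(x)+m(y)-1,0\}$, which you implicitly and correctly use.)
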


We remind a few definitions and results related to the  notion of a
{\it filter.}

A non-empty set $F\subseteq A$ is called a {\it filter} of $A$ (or a {\it BL-filter} of $A$) if for every $x,y\in A$:\\
$(1)$ $x,y\in F$ implies $x\odot y\in F;$\\
$(2)$ $x\in F,$ $x\leq y$ implies $y\in F.$

A proper filter $F$  of $A$ is called a {\it maximal filter} if it
is not strictly contained in any other proper filter.

We denote by $\Rad(A)$ the intersection of all maximal filters of
$A.$

\begin{prop}$\rm($\cite{DiGeIo}$\rm)$\label{27}
If $F$ is a proper filter in a nontrivial BL-algebra $A,$ then the following are equivalent:\\
$(1)$ $F$ is maximal;\\
$(2)$ for any $x\in A,$ $x\notin F$ implies $(x^n)^- \in F$ for some
$n\in \N.$
\end{prop}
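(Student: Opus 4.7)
The plan is to prove the two implications by exploiting the standard description of the filter generated by a subset in a BL-algebra.

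\textbf{(1) $\Rightarrow$ (2).} Assume $F$ is maximal and take $x \notin F$. I would consider the filter $\langle F \cup \{x\}\rangle$ generated by $F$ together with $x$. By a standard description, this filter consists of all $y \in A$ for which there exist $f \in F$ and $n \in \N$ with $f \odot x^n \leq y$. Since $F$ is maximal and $x \notin F$, this enlarged filter must coincide with $A$, so in particular $0 \in \langle F \cup \{x\}\rangle$. Hence $f \odot x^n \leq 0$ for some $f \in F$, $n \in \N$. By residuation (the adjointness $(1)$ in Definition \ref{def:BL}) this yields $f \leq x^n \rightarrow 0 = (x^n)^-$, and since $F$ is upward-closed we conclude $(x^n)^- \in F$.

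\textbf{(2) $\Rightarrow$ (1).} Suppose condition $(2)$ holds and, for contradiction, that $F$ is not maximal. Then there exists a proper filter $G$ with $F \subsetneq G$; pick $x \in G \setminus F$. By $(2)$, there is $n \in \N$ such that $(x^n)^- \in F \subseteq G$. Since $x \in G$ and $G$ is closed under $\odot$, also $x^n \in G$. Therefore $x^n \odot (x^n)^- \in G$. But using property $(2)$ of Definition \ref{def:BL} with $b = 0$ we get $x^n \odot (x^n \rightarrow 0) = x^n \wedge 0 = 0$, so $0 \in G$, contradicting properness of $G$.

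The only nontrivial ingredient is the description of $\langle F \cup \{x\}\rangle$ used in the first implication, which is a routine exercise built from the two filter axioms together with Proposition \ref{4}(1); I would state it briefly and not belabor its verification. Apart from that, both directions are straightforward applications of residuation and the identity $a \odot a^- = 0$.
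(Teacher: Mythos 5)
Your proof is correct. The paper does not prove this proposition at all --- it is imported from \cite{DiGeIo} as a known fact --- so there is no in-paper argument to compare against; your two directions (the description of the filter generated by $F\cup\{x\}$ plus residuation for $(1)\Rightarrow(2)$, and $x^n\odot(x^n)^-=x^n\wedge 0=0$ for $(2)\Rightarrow(1)$) are the standard proof and both check out.
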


A BL-algebra is called {\it local} if it has a unique maximal filter.

A proper filter $P$ of $A$ is called {\it
primary} if for  $a,b\in A,$  $(a\odot b)^-\in P$ implies
$(a^n)^-\in P$ or $(b^n)^-\in P$ for some $n\in \N.$

\begin{prop}$\rm($\cite{Leo}$\rm)$\label{27.1}
In a BL-algebra $A$ the following are equivalent:\\
$(1)$ $A$ is local;\\
$(2)$ any proper filter of $A$ is primary.
\end{prop}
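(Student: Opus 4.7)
The plan is to prove the equivalence directly in both directions, using Proposition~\ref{27} and the standard description of the filter generated by a subset of a BL-algebra.

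For $(1)\Rightarrow(2)$, I would assume $A$ is local with unique maximal filter $M$, and let $F$ be a proper filter with $(a\odot b)^-\in F$. Introduce the generated filters $F_a:=\langle F\cup\{a\}\rangle$ and $F_b:=\langle F\cup\{b\}\rangle$. If both were proper, localness would place them inside $M$; then $a,b\in M$, so $a\odot b\in M$, and together with $(a\odot b)^-\in F\subseteq M$ one would get $0=(a\odot b)\odot(a\odot b)^-\in M$, contradicting properness. Hence one of them, say $F_a$, is improper. Using the description $F_a=\{x\in A:f\odot a^n\le x\text{ for some }f\in F,\ n\ge 0\}$, the relation $0\in F_a$ gives $f\odot a^n=0$ for some $f\in F$ and $n\ge 1$ (the case $n=0$ would force $0=f\in F$). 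By residuation $f\le(a^n)^-$, so $(a^n)^-\in F$ by upward closure.

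For $(2)\Rightarrow(1)$, I would argue by contrapositive. Suppose $A$ has two distinct maximal filters $M_1\ne M_2$, and claim the trivial filter $\{1\}$ is not primary. Pick $a\in M_1\setminus M_2$. Since $M_2$ is maximal and $a\notin M_2$, Proposition~\ref{27} supplies $n\in\N$ with $(a^n)^-\in M_2$. Setting $u:=a^n$ and $v:=(a^n)^-$, we have $u\in M_1$, $v\in M_2$, and $u\odot v=u\odot u^-=0$, so $(u\odot v)^-=1\in\{1\}$. Primality of $\{1\}$ would demand $u^k=0$ or $v^k=0$ for some $k$; but $u^k\in M_1$ and $v^k\in M_2$, so either conclusion forces $0$ into one of the maximal filters, contradicting properness.

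The main technical ingredient lies in the forward direction, namely translating ``$F_a$ is improper'' into ``$(a^n)^-\in F$''. This rests on the explicit form of $\langle F\cup\{a\}\rangle$ and on the residuation identity $x\odot y=0\iff x\le y^-$, both standard for BL-algebras. The backward direction is then straightforward once one observes that $x\odot x^-=0$ always holds, so an element $a\in M_1\setminus M_2$ together with its witness $n$ from Proposition~\ref{27} produces a pair $(u,v)$ whose nilpotence is blocked by the two distinct maximal filters.
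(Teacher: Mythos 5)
Your argument is correct. Note that the paper itself gives no proof of this proposition---it is quoted from Leu\c stean's thesis \cite{Leo}---so there is nothing internal to compare against; your write-up supplies a self-contained proof that the paper omits. Both directions check out: in $(1)\Rightarrow(2)$ the key step, passing from ``$F_a=\langle F\cup\{a\}\rangle$ is improper'' to ``$(a^n)^-\in F$,'' is justified exactly as you say by the standard description $F_a=\{x: f\odot a^n\le x,\ f\in F,\ n\ge 0\}$ together with residuation $f\odot a^n=0\iff f\le (a^n)^-$; the only unstated ingredient is that in a local algebra every proper filter lies below the unique maximal one (Zorn, since a filter is proper iff it omits $0$). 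In $(2)\Rightarrow(1)$ the choice $u=a^n$, $v=(a^n)^-$ with $a\in M_1\setminus M_2$ and $n$ supplied by Proposition~\ref{27} does give $(u\odot v)^-=1\in\{1\}$ while $u^k\in M_1$ and $v^k\in M_2$ block any nilpotence, so $\{1\}$ fails to be primary. The only caveat, shared with Proposition~\ref{27} itself, is that $A$ must be nontrivial for $\{1\}$ to be a proper filter; it is worth stating that hypothesis explicitly.
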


\begin{prop}$\rm($\cite{Tur}$\rm)$\label{27'''}
A BL-algebra is local iff for any $x\in A,$ $\ord(x)<\infty$ or
$\ord(x^-)<\infty.$
\end{prop}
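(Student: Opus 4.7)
The plan is to prove the equivalence by contradiction in both directions, using the characterization of local BL-algebras as those with a unique maximal filter and the characterization of maximal filters from Proposition~\ref{27}.

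For the forward direction ($\Rightarrow$), I would suppose $A$ is local with unique maximal filter $M$ and that some $x \in A$ satisfies $\ord(x) = \infty$ and $\ord(x^-) = \infty$, aiming for a contradiction. Consider $F_x := \{y \in A : y \geq x^n \text{ for some } n \geq 1\}$. Upward closure is immediate, and by Proposition~\ref{4}(1), $y_1 \geq x^{n_1}$ and $y_2 \geq x^{n_2}$ imply $y_1 \odot y_2 \geq x^{n_1+n_2}$, so $F_x$ is a filter. It is proper because $x^n \neq 0$ for all $n$. By Zorn's lemma it embeds into a maximal filter, which by locality must be $M$; hence $x \in M$, and the same argument applied to $x^-$ gives $x^- \in M$. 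Then $x \odot x^- \in M$, but the BL-identity $x \odot x^- = x \odot (x \rightarrow 0) = x \wedge 0 = 0$ (axiom~(2) of Definition~\ref{def:BL}) puts $0 \in M$, contradicting properness.

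For the converse ($\Leftarrow$), I would assume the order condition holds and that $A$ has two distinct maximal filters $M_1 \neq M_2$, again aiming for a contradiction. Pick $x \in M_1 \setminus M_2$; by Proposition~\ref{27} there is $n \in \N$ with $(x^n)^- \in M_2$. Apply the hypothesis to $y := x^n$. If $\ord(y) < \infty$, then $y^k = 0$ for some $k$; since $x \in M_1$ forces $y = x^n \in M_1$ and hence $y^k \in M_1$, we obtain $0 \in M_1$, contradicting properness. Otherwise $\ord(y^-) < \infty$ yields $(y^-)^k = 0$ for some $k$; since $y^- \in M_2$ we conclude $0 \in M_2$, again a contradiction.

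The main point of care is the forward direction, where one needs both the identity $x \odot x^- = 0$ (a direct consequence of the BL-absorption axiom) and the Zorn-based extension of the principal filter generated by $x$; the converse is essentially immediate once one observes that membership in a proper $\odot$-closed filter precludes finite order, so Proposition~\ref{27} together with the hypothesis applied to $x^n$ forces $0$ into one of the two maximal filters.
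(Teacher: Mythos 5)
Your argument is correct. The paper itself gives no proof of this proposition (it is quoted from \cite{Tur}), so there is nothing to compare against, but your two-sided contradiction argument --- extending the proper principal filter $\{y : y\ge x^n\}$ to the unique maximal filter and using $x\odot x^- = x\wedge 0 = 0$ for one direction, and combining Proposition~\ref{27} with the order hypothesis applied to $x^n$ for the other --- is the standard, self-contained proof and all steps check out. The only cosmetic point worth noting is that in the converse you show there is \emph{at most} one maximal filter; locality also requires the existence of one, which follows from Zorn's lemma applied to the proper filter $\{1\}$ in a nontrivial $A$, and is worth a half-sentence.
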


\begin{rem}$\rm($\cite{Tu}$\rm)$
If $F$ is a filter of a BL-algebra $A,$ then we define the
equivalence relationship $x\sim_F y$ iff $(x\rightarrow y)\odot
(y\rightarrow x)\in F.$ Then $\sim_F$ is a congruence and  the
quotient algebra $A/F$ becomes a BL-algebra with the natural
operations induced from those on $A.$ Denoting by $x/F$ the
equivalence class of $x$, then $x/F = 1/F$ iff $x\in F.$ Conversely,
if $\sim$ is a congruence, then $F_\sim:=\{x\in A\mid x\sim 1\}$ is
a filter, and $\sim_{F_\sim}=\sim,$ and $F=F_{\sim_F}.$
\end{rem}

\begin{rem}\label{re:maxim}
If $F$ is a maximal filter on a BL-algebra $A,$ then $A/F$ is always
isomorphic to a subalgebra of the real interval $[0,1]$ that is
simultaneously an MV-algebra as well as a BL-algebra such that
$x^{--}=x$ for all $x \in [0,1].$ In other words, the mapping
$x\mapsto x/F,$ $x \in A,$ is a state-morphism.
\end{rem}

\begin{prop}$\rm($\cite{GeoLeo, Tur}$\rm)$\label{0}
A filter $P$ of $A$ is primary if and only if $A/P$ is local.
\end{prop}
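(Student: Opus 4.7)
The plan is to use Proposition \ref{27'''} to translate ``$A/P$ is local'' into an order condition on elements of $A/P$; both directions then collapse to one-line arguments via the primary condition and the identity $x \odot x^- = 0$.

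For the forward direction, suppose $P$ is primary and fix any $x \in A$. The adjunction (Definition \ref{def:BL}(1)) applied to the trivial inequality $x^- \le x \to 0$ yields $x \odot x^- \le 0$, so $(x \odot x^-)^- = 1 \in P$. Primarity (taken with $a = x$, $b = x^-$) then produces $n \in \N$ with $(x^n)^- \in P$ or $((x^-)^n)^- \in P$. Since $\sim_P$ is a congruence with respect to $\odot$ and $\to$ (hence with respect to $^-$), these conditions are equivalent to $(x/P)^n = 0/P$, respectively $((x/P)^-)^n = 0/P$, so $\ord(x/P) < \infty$ or $\ord((x/P)^-) < \infty$. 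By Proposition \ref{27'''}, $A/P$ is local.

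For the converse, suppose $A/P$ is local and assume $(a \odot b)^- \in P$. Passing to the quotient gives $(a/P) \odot (b/P) = 0/P$, whence by the adjunction $b/P \le (a/P)^-$. Proposition \ref{27'''} applied to $A/P$ yields some $n \in \N$ with $(a/P)^n = 0/P$ or $((a/P)^-)^n = 0/P$. In the first case $(a^n)^- \in P$ directly. In the second case, monotonicity of $\odot$ (Proposition \ref{4}(1)) together with $b/P \le (a/P)^-$ gives $(b/P)^n \le ((a/P)^-)^n = 0/P$, hence $(b^n)^- \in P$. Either way $P$ is primary.

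There is no serious obstacle: the entire argument relies on Proposition \ref{27'''}, the elementary identity $x \odot x^- = 0$, and the fact (recalled in the remark preceding Remark \ref{re:maxim}) that $\sim_P$ is a congruence for all BL-operations, so passage between statements in $A$ about membership in $P$ and statements in $A/P$ about equality with $0/P$ is immediate. The only small care needed is to match the primary condition with the order condition symmetrically in $a$ and $b$, which is accomplished by the case split above.
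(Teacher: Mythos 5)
Your argument is correct. The paper itself gives no proof of this proposition --- it is quoted from \cite{GeoLeo, Tur} --- so there is nothing to compare against, but your derivation from Proposition \ref{27'''} together with the translation $(x^n)^-\in P \Leftrightarrow (x/P)^n=0/P$ (valid because $\sim_P$ is a congruence and $x\sim_P 0$ iff $x^-\in P$) is exactly the standard route and every step checks out, including the use of $x\odot x^-=0$ in the forward direction and the case split via $b/P\le (a/P)^-$ in the converse. The only point you leave tacit is that ``primary'' presupposes $P$ proper; in the converse direction this follows because a local algebra has a maximal (hence proper) filter, so $A/P$ is nontrivial and $P\neq A$.
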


\begin{prop}$\rm($\cite[Cor. 1.16]{DiGeIo}$\rm)$\label{27''}
$\Rad(A)=\{x\in A\mid (x^n)^-\leq x, \forall \ n\in \N\}.$
\end{prop}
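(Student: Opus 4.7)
The plan is to prove both inclusions. For ``$\supseteq$'', I assume $(x^n)^- \le x$ for every $n \in \N$ and take an arbitrary maximal filter $M$. If $x \notin M$, Proposition~\ref{27} supplies some $k$ with $(x^k)^- \in M$; combined with $(x^k)^- \le x$ and upward closure of $M$, this forces $x \in M$, a contradiction. Hence $x \in \Rad(A)$.

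For ``$\subseteq$'', I fix $x \in \Rad(A)$ and $n \in \N$, and aim to show that $a := (x^n)^- \to x$ equals $1$. I will first argue that $b := (x^{n+1})^-$ lies in no maximal filter: indeed, if $b \in M$ maximal, then $x \in \Rad(A) \subseteq M$ forces $x^{n+1} \in M$, and axiom $(2)$ of Definition~\ref{def:BL} gives $x^{n+1} \odot b = x^{n+1} \wedge 0 = 0 \in M$, contradicting properness. A Zorn extension of the principal filter generated by $b$ to a maximal filter is then impossible, so this principal filter must be improper: $b^k = 0$ for some $k \in \N$.

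Next I apply the prelinearity axiom $(3)$ of Definition~\ref{def:BL} to $(x^n)^-$ and $x$, obtaining $a \vee (x \to (x^n)^-) = 1$; Proposition~\ref{4}(3) together with commutativity of $\odot$ yields $x \to (x^n)^- = (x \odot x^n)^- = b$, so $a \vee b = 1$. Since $\odot$ distributes over $\vee$, an easy induction gives
\[
  1 \;=\; (a \vee b)^k \;=\; \bigvee_{i+j=k} a^i \odot b^j .
\]
The summand with $i = 0$ is $b^k = 0$, while every summand with $i \ge 1$ satisfies $a^i \odot b^j \le a^i \le a$; therefore $1 \le a$, so $a = 1$ and $(x^n)^- \le x$.

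The hardest step is the reverse inclusion, since it is not obvious a priori how the infinitary condition ``$x$ lies in every maximal filter'' should yield a pointwise inequality. The decisive trick is to apply the standard ``in no maximal filter $\Leftrightarrow$ some power vanishes'' dichotomy not to $(x^n)^-$ itself but to $b = (x^{n+1})^-$, precisely because prelinearity applied to $(x^n)^-$ and $x$ produces exactly this $b$ on the complementary side. Once $a \vee b = 1$ and $b^k = 0$ are lined up, the final $(a \vee b)^k$-expansion is routine.
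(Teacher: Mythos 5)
Your proof is correct. Note that the paper itself gives no argument for this statement: it is quoted from \cite[Cor.\ 1.16]{DiGeIo}, so there is no in-paper proof to compare against; what you have supplied is a self-contained derivation from the facts the paper does record. Both inclusions are sound. The easy direction correctly combines Proposition \ref{27} with upward closure of filters. For the hard direction, your key move --- applying the dichotomy ``lies in no maximal filter iff some power vanishes'' to $b=(x^{n+1})^{-}$ rather than to $(x^{n})^{-}$, precisely because prelinearity applied to the pair $\bigl((x^{n})^{-},x\bigr)$ yields $a\vee b=1$ with $a=(x^{n})^{-}\to x$ --- is the standard route to this corollary, and the concluding expansion of $(a\vee b)^{k}$ is legitimate because $\odot$ distributes over finite joins in any BL-algebra. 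That distributivity is a consequence of residuation which you invoke without proof and which is not among the facts listed in Proposition \ref{4}, so it deserves an explicit citation or a one-line argument. Two minor housekeeping points: the $i=0$ summand being $b^{k}$ relies on the convention $a^{0}=1$, which is the intended one even though the paper's displayed definition of powers ($a^{0}:=0$, $a^{n}:=a^{n-1}$) is garbled; and the Zorn extension of the principal filter $F(b)$ to a maximal filter presupposes $A$ nontrivial, which is harmless since the trivial algebra satisfies the stated equality vacuously.
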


Denote $\Rad(A)^- = \{x^- \mid x\in \Rad(A)\}.$ The element $x\in A$
such that $(x^n)^-\le x$ for every integer $n \ge 1$ is said to be
{\it co-infinitesimal}. The latter proposition says that $\Rad(A)$
consists only from all co-infinitesimal elements of $A.$

\begin{rem}\label{26'}
One can easily check that if $x\in \Rad(A),$ then $x^-\in \Rad(A)^-$
and if $x\in \Rad(A)^-$, then $x^-\in \Rad(A).$
\end{rem}

A BL-algebra $A$ is called {\it perfect} if, for any $x \in A,$
either $x\in \Rad(A)$ or $x\in \Rad(A)^-.$

\begin{cor} $\rm($\cite{Ciu}$\rm)$\label{26''}
In a perfect BL-algebra $A$, if $x\in \Rad(A)$ and $y\in \Rad(A)^-$,
then $x^-\leq y^-.$
\end{cor}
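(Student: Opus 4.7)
The plan is to exhibit a short chain of inequalities going from $x^-$ up to $y^-$ through an auxiliary element built from $x$ and $y^-$. The two ingredients I will need are (a) that $\Rad(A)$ is a filter, being the intersection of all maximal filters, hence in particular closed under $\odot$; and (b) the characterization of radical elements in Proposition~\ref{27''}, which gives $(u^n)^- \leq u$ for every $u\in\Rad(A)$ and every $n\geq 1$, so in particular $u^-\leq u$.

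First I would convert the hypothesis on $y$ into a statement about $y^-$: by Remark~\ref{26'}, $y\in\Rad(A)^-$ forces $y^-\in\Rad(A)$. Combined with $x\in\Rad(A)$, and the fact that $\Rad(A)$ is closed under $\odot$, this shows that
\[
u := x\odot y^- \in \Rad(A).
\]
Applying the radical characterization to $u$ with $n=1$ yields $u^-\leq u$, that is, $(x\odot y^-)^-\leq x\odot y^-$.

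Next I would paste on the obvious monotonicity facts. Since $y^-\leq 1$ we have $x\odot y^-\leq x$, and antitonicity of $^-$ (a consequence of Proposition~\ref{4}) then gives $x^-\leq (x\odot y^-)^-$. On the other side, $x\leq 1$ gives $x\odot y^-\leq y^-$ directly. Concatenating these three inequalities,
\[
x^- \;\leq\; (x\odot y^-)^- \;\leq\; x\odot y^- \;\leq\; y^-,
\]
which is the desired conclusion.

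There is essentially no hard step: once one notices that the right auxiliary element is $u=x\odot y^-$, everything collapses into one line. It is worth remarking that the hypothesis that $A$ is \emph{perfect} is not used anywhere in the argument; the inequality $x^-\leq y^-$ holds for any $x\in\Rad(A)$ and $y\in\Rad(A)^-$ in an arbitrary BL-algebra.
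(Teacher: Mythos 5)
Your argument is correct, and note that the paper itself gives no proof here: the corollary is quoted from \cite{Ciu}, so there is no internal argument to compare against. Your chain is sound: $y\in\Rad(A)^-$ gives $y^-\in\Rad(A)$ by Remark~\ref{26'}, the radical is a filter (an intersection of filters) and hence $\odot$-closed, so $u=x\odot y^-\in\Rad(A)$; Proposition~\ref{27''} with $n=1$ gives $u^-\le u$; and the monotonicity of $\odot$ together with the antitonicity of $(\cdot)^-$ (which, strictly speaking, is not literally one of the items of Proposition~\ref{4} but follows at once from residuation: $u\le x$ implies $x^-\le u^-$) yields $x^-\le u^-\le u\le y^-$. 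In effect you have proved the cleaner general fact that $a^-\le b$ for any two elements $a,b\in\Rad(A)$ of an arbitrary BL-algebra, and then specialized $b=y^-$. Your closing remark is also correct: perfectness plays no role, and the inequality holds for any $x\in\Rad(A)$ and $y\in\Rad(A)^-$ in any BL-algebra; the hypothesis in the statement merely reflects the context (Theorem~\ref{32}) in which the corollary is applied.
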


A BL-algebra $A$ is called $(1)$ {\it simple} if $A$ has two
filters, $(2)$ {\it semisimple} if $\Rad(A)=\{1\},$ and $(3)$ {\it
locally finite} if for any $x\in A,$ $x\neq 1,$ $\ord(x)<\infty.$

\begin{lemma} \label{100} If $A$ is a BL-algebra, the following are equivalent:\\
$(1)$ $A$ is locally finite;\\
$(2)$ $A$ is simple.

In such a case, $A$ is linearly ordered.
\end{lemma}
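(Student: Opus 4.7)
The plan is to prove the two implications symmetrically using the standard principal-filter formula, and to read off linearity from Remark~\ref{re:maxim}. For $(1)\Rightarrow(2)$, I would take a filter $F\ne\{1\}$, pick some $x\in F$ with $x\ne 1$, use local finiteness to get $n$ with $x^n=0$, and then observe that closure of $F$ under $\odot$ forces $0=x^n\in F$, so upward closure gives $F=A$; hence the only filters are $\{1\}$ and $A$.

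For $(2)\Rightarrow(1)$, the key tool is the principal filter $\langle x\rangle=\{y\in A:y\ge x^n\text{ for some }n\ge 1\}$, which is indeed a filter by Proposition~\ref{4}(1). Given $x\ne 1$, this $\langle x\rangle$ strictly contains $\{1\}$, so simplicity forces $\langle x\rangle=A$; in particular $0\in\langle x\rangle$ yields $x^n=0$ for some $n$, i.e., $\ord(x)<\infty$.

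For the linearity assertion, I would note that under simplicity $\{1\}$ is the unique proper filter of $A$ and hence trivially maximal. Remark~\ref{re:maxim} then gives an isomorphic embedding of $A/\{1\}$ into $[0,1]$. Since the congruence $\sim_{\{1\}}$ collapses to equality (from $(x\to y)\odot(y\to x)=1$ one reads $x\le y$ and $y\le x$), the quotient is $A$ itself, and therefore $A$ inherits the linear order of $[0,1]$.

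I do not expect any genuine obstacle here; the only real choice is between this Remark~\ref{re:maxim}-based route and a direct attack via $(x\to y)\vee(y\to x)=1$ combined with nilpotency of both joinands, which would force one to invoke distributivity of $\odot$ over $\vee$ and expand powers of a join. I would favour the Remark-based route as noticeably cleaner.
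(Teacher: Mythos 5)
Your proof of the equivalence is correct and coincides with the paper's: the same nilpotency argument shows a proper filter cannot contain an element of finite order, and the same principal filter $\langle x\rangle=\{y: y\ge x^n \text{ for some } n\}$ handles the converse (the paper phrases it contrapositively via $F(x)$ being a proper filter $\ne\{1\}$, but the content is identical). The only genuine divergence is the linearity claim. The paper simply cites \cite[Prop 2.14]{DiGeIo}, which is the elementary fact that a locally finite BL-algebra is a chain. You instead invoke Remark \ref{re:maxim}: since $\{1\}$ is maximal under simplicity and $\sim_{\{1\}}$ is equality (your verification of this via $(x\to y)\odot(y\to x)=1\Rightarrow x=y$ is correct), $A\cong A/\{1\}$ embeds into $[0,1]$ and so is a chain. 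Formally this is valid within the paper's structure, since the Remark is stated earlier as background. Be aware, though, that the standard proof of that Remark runs precisely through ``$F$ maximal $\Rightarrow A/F$ simple $\Rightarrow A/F$ locally finite MV-chain,'' i.e., through the very lemma you are proving; so your route is a heavier citation that is circular in spirit (though not within the paper's formal ordering), whereas the paper's reference is to the weaker, genuinely prior fact. If you want a self-contained argument, the direct attack you mention -- from $(x\to y)\vee(y\to x)=1$, if both joinands were $<1$ they would be nilpotent, and distributivity of $\odot$ over $\vee$ forces a power of their join to be $0$, contradicting that the join is $1$ -- is the cleaner choice.
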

\begin{proof}
First, assume $A$ is locally finite. Consider $F$ a proper filter of
$A$ and let $x\in F\subseteq A, x\neq 1$. There exists $n\in \N,
n\geq 1$ such that $x^n=0$, so $0\in F$ which is a contradiction.
Thus the only proper filter of $A$ is $\{1\}$, that is $A$ is
simple.

Now, suppose $A$ is simple and let $x\in A, x\neq 1$ such that
$\ord(x)=\infty$. Then the filter $F(x)$ generated by $x$ is proper
and $F(a)\neq \{1\}$ which contradicts the hypothesis. It follows
that $A$ is locally finite.

The linearity of $A$ follows from \cite[Prop 2.14]{DiGeIo}.
\end{proof}

\section{State BL-algebras}%3

Inspired by T. Flaminio and F. Montagna \cite{FlMo0,FlMo}, we
enlarge the language of BL-algebras by introducing a new operator,
an internal state.

\begin{Def}\label{2'} Let $A$ be a BL-algebra. A mapping $\sigma:A
\to A$ such that, for all $x,y \in A,$ we have\\
$(1)_{BL}$ $\sigma(0)=0;$\\
$(2)_{BL}$ $\sigma(x\rightarrow y)=\sigma(x)\rightarrow \sigma(x\wedge y);$\\
%$(3)_{BL}$ $\sigma(x\odot y)=\sigma(x)\odot \sigma(x^-\vee y);$\\
$(3)_{BL}$ $\sigma(x\odot y)=\sigma(x)\odot \sigma(x\rightarrow x\odot y);$\\
$(4)_{BL}$ $\sigma(\sigma(x)\odot \sigma(y))=\sigma(x)\odot \sigma(y);$\\
$(5)_{BL}$ $\sigma(\sigma(x)\rightarrow
\sigma(y))=\sigma(x)\rightarrow \sigma(y)$ \\
is said to be a {\it state-operator} on $A,$ and the pair
$(A,\sigma)$ is said to be a {\it state BL-algebra}, or more
precisely, a {\it BL-algebra with  internal state}.
\end{Def}

We recall that the class of state BL-algebras forms a variety.

If $\sigma$ is a state-operator, then $\Ker(\sigma):=\{x\in A \mid
\sigma(x)=1\}$ is said to be the {\it kernel} of $\sigma$ and it is a filter
(more precisely a state-filter, see Section \ref{sec5}). A
state-operator $\sigma$ is said to be {\it faithful} if
$\Ker(\sigma)=\{1\}.$

\begin{ex}\label{ex:1}
$(A, \mbox{\rm id}_A)$ is a state BL-algebra.
\end{ex}

\begin{ex}\label{ex:2}  Let $A$ be a  BL-algebra. On $A\times
A$ we define two operators, $\sigma_1$ and $\sigma_2,$ as follows
$$
\sigma_1(a,b)=(a,a),\quad \sigma_2(a,b)=(b,b),\quad (a,b)\in A\times
A.\eqno(3.1)
$$
Then $\sigma_1$ and $\sigma_2$ are two state-operators on $A\times
A$ that are also  endomorphisms such that $\sigma_i^2 =\sigma_i,$
$i=1,2.$ Moreover, $(A\times A,\sigma_1)$ and $(A\times A,\sigma_2)$
are isomorphic state BL-algebras under the isomorphism $(a,b)\mapsto
(b,a).$
\end{ex}

\begin{ex}\label{4elemEx}
Consider $A=\{0,a,b,1\}$ where $0<a<b<1$. Then
$(A,\wedge,\vee,\odot,\rightarrow,0,1)$ is a BL-algebra that is not
an MV-algebra (\cite{Tur}) with the operations:
\begin{center}
\begin{tabular}{|c|c|c|c|c|c|}
\hline
$\mathbf{\odot}$ & \textbf{0} & \textbf{a} & \textbf{b} & \textbf{1} \\ \hline
\textbf{0} & 0 & 0 & 0 & 0 \\
\hline
\textbf{a} & 0 & 0 & a & a \\
\hline
\textbf{b} & 0 & a & b & b \\
\hline
\textbf{1} & 0 & a & b & 1 \\
\hline
\end{tabular}
%\hspace{10mm}
\qquad
\begin{tabular}{|c|c|c|c|c|c|}
\hline
$\mathbf{\rightarrow}$ & \textbf{0} & \textbf{a} & \textbf{b} & \textbf{1} \\
\hline
\textbf{0} & 1 & 1 & 1 & 1 \\
\hline
\textbf{a} & a & 1 & 1 & 1 \\
\hline
\textbf{b} & 0 & a & 1 & 1 \\
\hline
\textbf{1} & 0 & a & b & 1 \\
\hline
\end{tabular}
%.
\end{center}

The operation $\oplus$ is given by the table:
\begin{center}
\begin{tabular}{|c|c|c|c|c|c|}
\hline
$\mathbf{\oplus}$ & \textbf{0} & \textbf{a} & \textbf{b} & \textbf{1} \\
\hline
\textbf{0} & 0 & a & 1 & 1 \\
\hline
\textbf{a} & a & 1 & 1 & 1 \\
\hline
\textbf{b} & 1 & 1 & 1 & 1 \\
\hline
\textbf{1} & 1 & 1 & 1 & 1 \\
\hline
\end{tabular}
\end{center}

One can easily check that the unary operation $\sigma$ defined as follows:\\
$\hspace*{3cm}$ $\sigma(0)=0,\ \sigma(a)=a,\ \sigma(b)=1,\ \sigma(1)=1$ \\
is a state-operator on $A$. Therefore, $(A,\sigma)$ is a state
BL-algebra. Moreover, the following identities hold: $\sigma(x\odot
y)=\sigma(x)\odot \sigma(y)$ and $\sigma(x\rightarrow
y)=\sigma(x)\rightarrow \sigma(y)$ for all $x,y\in A$, so $\sigma$
is a BL-endomorphism and $\sigma(A)=\{0,a,1\}.$

\end{ex}

\begin{lemma}\label{3new}
In a state BL-algebra $(A,\sigma)$ the following hold:\\
$(a)$ $\sigma(1)=1;$\\
$(b)$ $\sigma(x^-) = \sigma(x)^-;$\\
$(c)$ if $x\leq y,$ then $\sigma(x)\leq \sigma(y);$\\
$(d)$ $\sigma(x\odot y)\geq \sigma(x)\odot \sigma(y)$ and if $x\odot y = 0,$ then $\sigma (x\odot y)=\sigma(x)\odot \sigma(y);$\\
$(e)$ $\sigma(x\ominus y)\geq \sigma(x)\ominus \sigma(y)$ and if $x\leq y,$ then $\sigma(x\ominus y)=\sigma(x)\ominus \sigma(y);$\\
$(f)$ $\sigma(x\wedge y) = \sigma(x)\odot \sigma(x\rightarrow y)$;\\
$(g)$ $\sigma(x\rightarrow y)\leq \sigma(x)\rightarrow \sigma(y)$ and if $x, y$ are comparable, then $\sigma(x\rightarrow y) = \sigma(x)\rightarrow \sigma(y)$;\\
$(h)$ $\sigma(x\rightarrow y)\odot \sigma(y\rightarrow x)\leq d(\sigma(x),\sigma(y));$\\
$(i)$ $\sigma(x)\oplus \sigma(y)\geq \sigma(x\oplus y)$ and if $x\oplus y=1,$ then $\sigma(x)\oplus \sigma(y)=\sigma(x\oplus y)=1;$\\
$(j)$ $\sigma(\sigma(x))=\sigma(x);$\\
$(k)$ $\sigma(A)$ is a BL-subalgebra of $A;$\\
$(l)$ $\sigma(A)=\{x\in A: x=\sigma(x)\};$\\
$(m)$ if $\ord(x)< \infty,$ then $\ord(\sigma(x))\le \ord(x)$ and  $\sigma(x)\notin \Rad(A);$\\
$(n)$ $\sigma(x\rightarrow y) = \sigma(x)\rightarrow \sigma(y)$ iff
$\sigma(y\rightarrow x) = \sigma(y)\rightarrow \sigma(x);$\\
$(o)$ if $\sigma(A) = A,$ then $\sigma$ is the identity on $A;$\\
$(p)$ if $\sigma$ is faithful, then $x<y$ implies
$\sigma(x)<\sigma(y);$\\
$(q)$ if $\sigma$ is faithful then either $\sigma(x)=x$ or
$\sigma(x)$ and $x$ are not comparable;\\
$(r)$ if $A$ is linear and $\sigma$ faithful, then  $\sigma(x)=x$
for any $x \in A.$

\end{lemma}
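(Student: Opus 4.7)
The plan is to prove the eighteen items in the stated order, with each building on its predecessors. Items (a)--(b) come from $(2)_{BL}$ with $x=y=0$ and with $y=0$ respectively, using $x\wedge 0=0$ and $\sigma(0)=0$. Item (f), $\sigma(x\wedge y)=\sigma(x)\odot\sigma(x\to y)$, follows by applying $(3)_{BL}$ to the identity $x\wedge y=x\odot(x\to y)$ of Definition \ref{def:BL}(2) and simplifying via Proposition \ref{4}(4). With (f) in hand, monotonicity (c) is immediate: if $x\le y$ then $y\wedge x=x$, so $\sigma(x)=\sigma(y)\odot\sigma(y\to x)\le\sigma(y)$. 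From here (g) combines $(2)_{BL}$, the inequality $x\wedge y\le y$, (c), and Proposition \ref{4}(2); (d) uses $(3)_{BL}$ together with the adjunction consequence $y\le x\to(x\odot y)$ and (c), the equality case for $x\odot y=0$ coming from a squeeze between $0$ and $\sigma(0)$; (e) and (i) reduce to (d) and (b) via the definitions of $\ominus$ and $\oplus$; (h) is obtained by $\odot$-multiplying two instances of (g). Item (j) is $(4)_{BL}$ with $y=1$. Then $(4)_{BL}$, $(5)_{BL}$ and (j) close $\sigma(A)$ under $\odot$ and $\to$, hence under $\wedge,\vee$ and the constants, giving (k), and (l) is then immediate from (j).

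For (m), iterating (d) gives $\sigma(x)^n\le\sigma(x^n)=0$, whence $\ord(\sigma(x))\le n=\ord(x)$; then the characterisation $\Rad(A)=\{a\mid(a^k)^-\le a\ \forall k\}$ from Proposition \ref{27''} forces $\sigma(x)\notin\Rad(A)$, since otherwise $1=(\sigma(x)^n)^-\le\sigma(x)$ would contradict $\sigma(x)^n=0$. The crux is (n). Assuming $\sigma(x\to y)=\sigma(x)\to\sigma(y)$, rewrite the left side via $(2)_{BL}$ as $\sigma(x)\to\sigma(x\wedge y)$, so $\sigma(x)\to\sigma(x\wedge y)=\sigma(x)\to\sigma(y)$. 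Left-$\odot$-multiplying by $\sigma(x)$ and using the BL-identity $a\odot(a\to b)=a\wedge b$ inside the subalgebra $\sigma(A)$ collapses this to $\sigma(x\wedge y)=\sigma(x)\wedge\sigma(y)$. Substituting into $\sigma(y\to x)=\sigma(y)\to\sigma(y\wedge x)$ and applying Proposition \ref{4}(4) yields $\sigma(y\to x)=\sigma(y)\to\sigma(x)$, and symmetry finishes the equivalence.

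The remaining items are short. Item (o) is a direct reading of (l). For (p), if $x<y$ and $\sigma(x)=\sigma(y)$, then comparability together with the equality case of (g) forces $\sigma(y\to x)=\sigma(y)\to\sigma(x)=1$, so faithfulness gives $y\to x=1$, contradicting $x<y$. For (q), assume $\sigma(x)\le x$; then $(2)_{BL}$ combined with (j) gives $\sigma(x\to\sigma(x))=\sigma(x)\to\sigma(\sigma(x))=1$, so faithfulness yields $x\le\sigma(x)$ and hence equality, with the case $x\le\sigma(x)$ being symmetric. Finally (r) is immediate from (q) together with the hypothesis of linearity. The main obstacle throughout is (n): since $\odot$ is not right-cancellative in a general BL-algebra, one must route the argument through $a\odot(a\to b)=a\wedge b$ inside the subalgebra $\sigma(A)$ rather than attempt a direct cancellation.
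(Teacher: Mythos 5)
Your proof is correct and follows essentially the same route as the paper: each item is derived from the axioms $(1)_{BL}$--$(5)_{BL}$ via the same key identities ($x\wedge y = x\odot(x\to y)$, $y\le x\to x\odot y$, Proposition \ref{4}(4), etc.). The only differences are cosmetic reorderings — you obtain $(c)$ as a corollary of $(f)$, settle the equality case of $(d)$ by the cleaner squeeze $0=\sigma(x\odot y)\ge\sigma(x)\odot\sigma(y)\ge 0$, route $(n)$ through the intermediate identity $\sigma(x\wedge y)=\sigma(x)\wedge\sigma(y)$, and prove $(q)$ directly from $(2)_{BL}$ and faithfulness rather than from $(p)$ — all of which are sound.
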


\begin{proof}
$(a)$ $\sigma(1)=\sigma(x\rightarrow x)=\sigma(x)\rightarrow
\sigma(x\wedge x)=1$ using condition $(2)_{BL}.$

$(b)$ $\sigma(x^-)=\sigma(x\rightarrow 0)=\sigma(x)\rightarrow
\sigma(x\wedge 0)=\sigma(x)\rightarrow 0=\sigma(x)^-$ using
$(1)_{BL}$ and $(2)_{BL}.$

$(c)$ If $x\leq y,$ $x=y\odot (y\rightarrow x),$ so by $(3)_{BL}$,
we get
$\sigma(x)=\sigma(y\odot (y\rightarrow x))=\sigma(y)\odot \sigma(y\rightarrow (y\odot (y\rightarrow x))) \leq \sigma(y).$% q.e.d.

$(d)$ By $(3)_{BL},$ (5) of Proposition \ref{4} and $(c)$ we have
$\sigma(x\odot y)=\sigma(x)\odot \sigma(x\rightarrow x\odot y)\geq
\sigma(x)\odot \sigma(y),$ because in view of Proposition
\ref{4}(5), $y\leq x\rightarrow x\odot y$. If $x\odot y = 0$, then
$y\leq x^-$, so $\sigma(x)\odot \sigma(y)\leq \sigma(x)\odot
\sigma(x^-) = \sigma(x)\odot \sigma(x)^- = 0 = \sigma(0) =
\sigma(x\odot y)$.

$(e)$ Since $x\ominus y:=x\odot y^-,$ so then $\sigma(x\ominus
y)=\sigma(x\odot y^-)=\sigma(x)\odot \sigma(x\rightarrow x\odot
y^-)\geq \sigma(x)\odot \sigma(y^-)=\sigma(x)\odot
\sigma(y)^-=\sigma(x)\ominus \sigma(y),$ using $(3)_{BL}$ and $(c).$
Now if $x\leq y,$ then $y^-\leq x^-$ and $x\odot y^- \leq x\odot x^-
= 0$, so by $(d)$ we obtain equality.

$(f)$ $\sigma(x\wedge y) = \sigma(x\odot (x\rightarrow y)) =
\sigma(x)\odot \sigma(x\rightarrow (x\odot (x\rightarrow y))) =
\sigma(x)\odot \sigma(x\rightarrow (x\wedge y)) = \sigma(x)\odot
\sigma(x\rightarrow y)$, using Proposition \ref{4}(4) .

$(g)$ Using $(2)_{BL}$, $(c)$ and Proposition \ref{4}(2), it follows
that $\sigma(x\rightarrow y)=\sigma(x)\rightarrow \sigma(x\wedge
y)\leq \sigma(x)\rightarrow \sigma(y).$ If $x\leq y$, then from
$(c)$ we get $\sigma(x)\leq \sigma(y)$ and $\sigma(x\rightarrow y) =
\sigma(1) = 1 = \sigma(x)\rightarrow \sigma(y)$. Let now $y\leq x$,
then $x\wedge y = y$ and from $(2)_{BL}$ we have the desired
equality.

$(h)$ From $(g)$ we know that $\sigma(x\rightarrow y)\leq
\sigma(x)\rightarrow \sigma(y)$ and similarly $\sigma(y\rightarrow
x)\leq \sigma(y)\rightarrow \sigma(x)$. Using Proposition \ref{4}(1)
we get $\sigma(x\rightarrow y)\odot \sigma(y\rightarrow x)\leq
d(\sigma(x),\sigma(y)).$

$(i)$ We know $x\oplus y=(x^-\odot y^-)^-.$
 By $(b)$ and $(d),$
$\sigma(x^-\odot y^-)\geq \sigma(x^-)\odot \sigma(y^-),$ so
$(\sigma(x^-)\odot \sigma(y^-))^-\geq \sigma((x^-\odot y^-))^-,$
which implies
$\sigma(x)\oplus \sigma(y)\geq \sigma(x\oplus y).$\\
If $x\oplus y=1,$ then $1=\sigma(x\oplus y)\leq \sigma(x)\oplus
\sigma(y),$ so $\sigma(x)\oplus \sigma(y)=1$ and thus
$\sigma(x)\oplus \sigma(y)=\sigma(x\oplus y).$

$(j)$ Replacing $y=1$ in $(4)_{BL}$ and using $(a)$ we get:
$\sigma(\sigma(x))=\sigma(\sigma(x)\odot \sigma(1))=\sigma(x)\odot
\sigma(1)=\sigma(x).$

$(k)$ From $(1)_{BL}, (4)_{BL}, (5)_{BL}$ and $(a)$ it follows that
$\sigma(A)$ is closed under all BL-operations $\odot, \to, \wedge$
and $\vee.$ Thus $\sigma(A)$ is a BL-subalgebra of $A$.

 $(l)$ For the direct inclusion, consider
$x\in \sigma(A),$ that is $x=\sigma(a)$ for some $a\in A.$ Then
$\sigma(x)=\sigma(\sigma(a))=\sigma(a)$ by $(j),$ and thus
$x=\sigma(x).$ The other inclusion is straightforward.

$(m)$  Let $m=\ord(x).$ By $(d),$ $0=\sigma(x^m)\ge \sigma(x)^m$
proving $\ord(\sigma(x))\le \ord(x).$ From Proposition \ref{27''} we
conclude any element of finite order cannot belong to $\Rad(A).$

$(n)$ Let $\sigma(x\rightarrow y) = \sigma(x)\rightarrow \sigma(y)$.
Then by $(f),$ $\sigma(y\rightarrow x) = \sigma(y)\rightarrow
\sigma(y\wedge x) = \sigma(y)\rightarrow (\sigma(x)\odot
\sigma(x\rightarrow y)) = \sigma(y)\rightarrow (\sigma(x)\odot
(\sigma(x)\rightarrow \sigma(y))) = \sigma(y)\rightarrow
\sigma(x)\wedge \sigma(y) = \sigma(y)\rightarrow \sigma(x)$. The
converse implication is proved by exchanging $x$ and $y$ in the
previous formulas.

$(o)$ For any $x \in A,$ we have $x =\sigma(x_0)$ for some $x_0\in
A.$ By $(j)$, we have $\sigma(x)=
\sigma(\sigma(x_0))=\sigma(x_0)=x.$

$(p)$ Suppose the converse, i.e. $\sigma(x)=\sigma(y).$  Then
$\sigma(y\to x)=\sigma(y)\to \sigma(x)=1$ giving $y\le x,$ absurd.

$(q)$ Let  $x$ be such that $\sigma(x)\ne x$ and let $x$ and
$\sigma(x)$ be comparable. Then $x<\sigma(x)$ or $\sigma(x)<x$
giving $\sigma(x)<\sigma(x),$ a contradiction.

$(r)$ It follows directly from $(q).$
\end{proof}

\begin{rem}\label{re:3.6}
It is interesting to note that for MV-algebras and linear product
BL-algebras \cite[Lemma 4.1]{CEGT} we have $x\rightarrow x\odot y =
x^- \vee y$. So for these subvarieties the axiom $(3)_{BL}$ can be
rewritten in the form:
$$ \sigma(x\odot y)=\sigma(x)\odot \sigma(x^-\vee y).\eqno(3')_{BL}$$
\end{rem}

\begin{Def}   A {\it strong state-operator} on a BL-algebra $A$ is a
mapping $\sigma:A\to A$ satisfying $(1)_{BL},(2)_{BL},(3')_{BL},
(4)_{BL},$ and $(5)_{BL}.$  The couple $(A,\sigma)$ is called a {\it
strong state BL-algebra}.
\end{Def}

In what follows, we show that a strong state-operator is always a
state-operator.

\begin{prop}\label{pr:3.8}
Every strong state BL-algebra is a state BL-algebra.
\end{prop}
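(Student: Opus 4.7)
The plan is to derive axiom $(3)_{BL}$ from $(3')_{BL}$ by applying the latter twice and exploiting the BL-algebra identity $a \wedge b = a \odot (a \to b)$. Set $z := x \to x\odot y$ and observe that $x \odot z = x \odot (x \to x\odot y) = x \wedge (x\odot y) = x\odot y$, since $x\odot y \leq x$. Therefore $\sigma(x\odot z) = \sigma(x\odot y)$, and applying $(3')_{BL}$ to the pair $(x,z)$ as well as to the pair $(x,y)$ yields
\[
\sigma(x)\odot \sigma(x^- \vee y) = \sigma(x\odot y) = \sigma(x\odot z) = \sigma(x)\odot \sigma(x^- \vee z).
\]

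Next I would check that $x^- \leq z$: since $x \odot x^- = 0 \leq x\odot y$, the residuation inequality $(1)$ of Definition~\ref{def:BL} gives $x^- \leq x \to x\odot y = z$. Hence $x^- \vee z = z = x \to x\odot y$, so $\sigma(x^- \vee z) = \sigma(x \to x\odot y)$. Substituting this into the chain above gives
\[
\sigma(x\odot y) = \sigma(x)\odot \sigma(x^- \vee y) = \sigma(x)\odot \sigma(x \to x\odot y),
\]
which is precisely $(3)_{BL}$. Since the other four axioms $(1)_{BL},(2)_{BL},(4)_{BL},(5)_{BL}$ are assumed identically, $(A,\sigma)$ is a state BL-algebra.

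There is no real obstacle: the argument is a short calculation whose only ingredient, beyond the axioms of strong state BL-algebras, is the defining BL-identity $a\wedge b = a\odot(a\to b)$ applied with $b = x\odot y$. The small point to verify is the inequality $x^- \leq x\to x\odot y$, which is immediate from residuation. Notice that we do not need monotonicity of $\sigma$ or any consequence of $(3)_{BL}$ itself in this derivation, so the implication is genuinely one-way at this stage of the paper and the argument uses only the five axioms stipulated for a strong state-operator.
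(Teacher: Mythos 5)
Your proof is correct and is essentially the paper's argument: the paper first derives $\sigma(x\wedge y)=\sigma(x)\odot\sigma(x\to y)$ from $(3')_{BL}$ applied to the pair $(x,x\to y)$ (using $x^-\le x\to y$ to collapse the join) and then substitutes $y\mapsto x\odot y$, which is exactly your single application of $(3')_{BL}$ to the pair $(x,z)$ with $z=x\to x\odot y$. The only cosmetic difference is that your extra application of $(3')_{BL}$ to the pair $(x,y)$ is not needed for the conclusion.
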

\begin{proof}  Let $\sigma$ be a strong state-operator on $A.$
We prove that $(3')_{BL}$ implies $(3)_{BL}.$ Indeed,  we have
$\sigma(x\wedge y) = \sigma(x\odot (x\rightarrow y)) =
\sigma(x)\odot \sigma(x^-\vee (x\rightarrow y)) = \sigma(x)\odot
\sigma(x\rightarrow y)$. Replacing $y$ by $x\odot y$ in the previous
identity we obtain: $\sigma(x\wedge (x\odot y)) = \sigma(x\odot y) =
\sigma(x)\odot \sigma(x\rightarrow x\odot y)$, which is axiom
$(3)_{BL}$. Thus we obtain $(3')_{BL}\Rightarrow (3)_{BL}.$
\end{proof}

We recall that the converse implication is not known.

For strong state BL-algebras, the properties stated in Lemma
\ref{3new} can be extended as follows:

\begin{lemma}\label{3}
Let $(A,\sigma)$ be a strong state BL-algebra. Then, for all $x,y \in A,$ we have:\\
$(a)$ $\sigma(x\odot y)\geq \sigma(x)\odot \sigma(y)$ and if $x^-\leq y$ then $\sigma (x\odot y)=\sigma(x)\odot \sigma(y);$\\
$(b)$ $\sigma(x\ominus y)\geq \sigma(x)\ominus \sigma(y)$ and if $x$ and $y$ are comparable then $\sigma(x\ominus y)=\sigma(x)\ominus \sigma(y);$\\
$(c)$ $\sigma(x\odot \sigma(x ^-))= \sigma(x ^-\odot\sigma(x))$ for
any $x \in A.$

\end{lemma}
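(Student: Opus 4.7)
The plan is to exploit axiom $(3')_{BL}$, which in the strong setting lets me pull out a factor of $\sigma(x)$ from $\sigma(x\odot y)$ at the price of a join with $x^-$. This is the single tool driving all three parts; Lemma \ref{3new} supplies the inequalities, and $(3')_{BL}$ upgrades them to equalities under the stated hypotheses.

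For part $(a)$, the inequality $\sigma(x\odot y)\ge \sigma(x)\odot\sigma(y)$ is already contained in Lemma \ref{3new}(d), so only the equality needs work. Under the hypothesis $x^-\le y$, the join $x^-\vee y$ collapses to $y$, so $(3')_{BL}$ gives $\sigma(x\odot y)=\sigma(x)\odot\sigma(x^-\vee y)=\sigma(x)\odot\sigma(y)$, which is exactly what is wanted. For part $(b)$, I unfold $x\ominus y=x\odot y^-$ and apply $(3')_{BL}$ to obtain $\sigma(x\ominus y)=\sigma(x)\odot\sigma(x^-\vee y^-)$; the inequality half is Lemma \ref{3new}(e). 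When $y\le x$, one has $x^-\le y^-$, the join collapses to $y^-$, and then Lemma \ref{3new}(b) converts $\sigma(y^-)$ to $\sigma(y)^-$, yielding $\sigma(x)\ominus\sigma(y)$. When $x\le y$ instead, $x\odot y^-\le x\odot x^-=0$ forces $\sigma(x\ominus y)=0$, and monotonicity (Lemma \ref{3new}(c)) forces $\sigma(x)\ominus\sigma(y)=0$ as well, so equality holds trivially. No step here looks delicate.

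The interesting identity is $(c)$. My plan is to apply $(3')_{BL}$ to the left-hand side, do some bookkeeping with $\sigma(x^-)=\sigma(x)^-$ (Lemma \ref{3new}(b)) and the commutativity of $\vee$, and then recognize the right-hand side by reading $(3')_{BL}$ backwards. Concretely, $(3')_{BL}$ gives
\[
\sigma\bigl(x\odot\sigma(x^-)\bigr)=\sigma(x)\odot\sigma\bigl(x^-\vee\sigma(x^-)\bigr)=\sigma(x)\odot\sigma\bigl(\sigma(x)^-\vee x^-\bigr),
\]
after using $\sigma(x^-)=\sigma(x)^-$ and commutativity of $\vee$. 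The rightmost expression is obtained by applying $(3')_{BL}$ to $\sigma(\sigma(x)\odot x^-)$, once we remember that $\sigma(\sigma(x))=\sigma(x)$ by Lemma \ref{3new}(j). Reading $(3')_{BL}$ from right to left thus identifies this with $\sigma(\sigma(x)\odot x^-)=\sigma(x^-\odot\sigma(x))$, completing the proof.

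The only step requiring any care is the final reversal of $(3')_{BL}$ in part $(c)$: one has to check that the first factor on the reversed side is indeed $\sigma$-idempotent so that applying $(3')_{BL}$ to $\sigma(\sigma(x)\odot x^-)$ really reproduces the expression reached from the left-hand side. This is exactly Lemma \ref{3new}(j), so nothing unexpected intervenes; the lemma is essentially a bookkeeping exercise in $(3')_{BL}$ together with the basic identities of Lemma \ref{3new}.
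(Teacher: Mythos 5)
Your proof is correct and follows essentially the same route as the paper's: all three parts are driven by $(3')_{BL}$, with the joins $x^-\vee y$ collapsing under the stated hypotheses and, in $(c)$, the identity $\sigma(\sigma(x))=\sigma(x)$ justifying the reverse application of $(3')_{BL}$ to $\sigma(\sigma(x)\odot x^-)$. The only cosmetic difference is that you re-derive the $x\le y$ case of $(b)$ directly rather than citing Lemma \ref{3new}$(e)$ as the paper does.
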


\begin{proof}
$(a)$ By $(3')_{BL}$ and $(c)$ we have $\sigma(x\odot
y)=\sigma(x)\odot \sigma(x^-\vee y)\geq \sigma(x)\odot \sigma(y).$
If $x^-\leq y,$ then $x^-\vee y=y,$ thus we obtain the desired
equality.

$(b)$ We know: $x\ominus y=x\odot y^-,$ so then $\sigma(x\ominus y)=\sigma(x\odot y^-)=\sigma(x)\odot \sigma(x^- \vee y^-)\geq \sigma(x)\odot \sigma(y^-)=\sigma(x)\odot \sigma(y)^-=\sigma(x)\ominus \sigma(y),$ using $(3')_{BL}$ and $(c).$\\
Now assume $x$ and $y$ are comparable. The case $x\leq y$ is proved
in Lemma \ref{3new}$(e)$. If $y\leq x$, then $x^-\leq y^-$ and
$x^-\vee y^- = y^-$, and thus we have equality.

$(c)$ Check $ \sigma(x \odot\sigma(x^-))=\sigma (x)\odot \sigma
(x^-\vee \sigma(x^-))= \sigma (x^-\odot  \sigma(x)).$
\end{proof}

\begin{lemma}\label{arrow}
Let $(A, \sigma)$ be a state BL-algebra. The following hold: \\
$(1)$ Let $x,y\in A$ be fixed. Then $\sigma(x\rightarrow y) =
\sigma(x)\rightarrow \sigma(y)$
if and only if $\sigma(x\wedge y) = \sigma(x)\wedge \sigma(y)$;\\
$(2)$ $\sigma(x\rightarrow y) = \sigma(x)\rightarrow \sigma(y)$ for
all $x,y \in A$ if and only if $\sigma(x\vee y) = \sigma(x)\vee
\sigma(y)$ for all $x,y \in A$.\\
$(3)$ If $\sigma(x\to y)=\sigma(x)\to \sigma(y)$ for all $x,y\in A,$
then $\sigma(x\odot y)=\sigma(x)\odot \sigma(y)$ for all $x,y \in
A,$ and $\sigma$ is an endomorphism.
\end{lemma}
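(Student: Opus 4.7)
I would prove the three parts in order, since (2) uses (1) and (3) uses both. Part (1) is a short algebraic calculation. Combine the axiom $(2)_{BL}$, $\sigma(x\rightarrow y)=\sigma(x)\rightarrow\sigma(x\wedge y)$, with Lemma \ref{3new}(f), $\sigma(x\wedge y)=\sigma(x)\odot\sigma(x\rightarrow y)$. If $\sigma(x\wedge y)=\sigma(x)\wedge\sigma(y)$, then $(2)_{BL}$ together with Proposition \ref{4}(4) yields $\sigma(x\rightarrow y)=\sigma(x)\rightarrow(\sigma(x)\wedge\sigma(y))=\sigma(x)\rightarrow\sigma(y)$. Conversely, substituting the hypothesis on $\rightarrow$ into Lemma \ref{3new}(f) and using the divisibility axiom $a\odot(a\rightarrow b)=a\wedge b$ inside $\sigma(A)$ gives $\sigma(x\wedge y)=\sigma(x)\wedge\sigma(y)$.

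For part (2), my plan rests on the residuated-lattice identity $(a\vee b)\rightarrow c=(a\rightarrow c)\wedge(b\rightarrow c)$, which specialised to $c=b$ yields
$$x\rightarrow y=(x\vee y)\rightarrow y.$$
Assuming $\sigma$ preserves $\vee$, the comparability $y\le x\vee y$ lets me apply Lemma \ref{3new}(g), and I compute
$$\sigma(x\rightarrow y)=\sigma((x\vee y)\rightarrow y)=\sigma(x\vee y)\rightarrow\sigma(y)=(\sigma(x)\vee\sigma(y))\rightarrow\sigma(y)=\sigma(x)\rightarrow\sigma(y),$$
the last step re-invoking $a\rightarrow b=(a\vee b)\rightarrow b$ inside the subalgebra $\sigma(A)$. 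For the forward direction, I would use the BL-formula $x\vee y=((x\rightarrow y)\rightarrow y)\wedge((y\rightarrow x)\rightarrow x)$ (whose nontrivial half comes from prelinearity combined with the distribution of $\odot$ over $\vee$); applying $\sigma$, together with preservation of $\wedge$ from part (1) and the hypothesis on $\rightarrow$, yields $\sigma(x\vee y)=\sigma(x)\vee\sigma(y)$.

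For part (3), assume $\sigma$ preserves $\rightarrow$. Parts (1) and (2) already give preservation of $\wedge$ and $\vee$. To deduce preservation of $\odot$, I would apply $\sigma$ to the identity $(x\odot y)\rightarrow z=x\rightarrow(y\rightarrow z)$ from Proposition \ref{4}(6), yielding
$$\sigma(x\odot y)\rightarrow\sigma(z)=(\sigma(x)\odot\sigma(y))\rightarrow\sigma(z)$$
for every $z\in A$, equivalently for every $w=\sigma(z)\in\sigma(A)$. Since $\sigma(A)$ is a BL-subalgebra (Lemma \ref{3new}(k)), $\sigma(x)\odot\sigma(y)\in\sigma(A)$, so substituting $w=\sigma(x)\odot\sigma(y)$ forces $\sigma(x\odot y)\rightarrow(\sigma(x)\odot\sigma(y))=1$, i.e. $\sigma(x\odot y)\le\sigma(x)\odot\sigma(y)$. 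Combining with the opposite inequality from Lemma \ref{3new}(d) produces equality, and together with $\sigma(0)=0$, $\sigma(1)=1$ this makes $\sigma$ a BL-endomorphism. The only step requiring real insight is spotting the identity $x\rightarrow y=(x\vee y)\rightarrow y$ that drives the backward direction of (2); everything else is direct manipulation of the state-operator axioms inside $\sigma(A)$.
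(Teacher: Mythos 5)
Your proposal is correct and follows essentially the same route as the paper: part (1) via Lemma \ref{3new}(f), axiom $(2)_{BL}$ and Proposition \ref{4}(4); part (2) via the identities $x\vee y=[(x\to y)\to y]\wedge[(y\to x)\to x]$ and $(x\vee y)\to y=x\to y$; and part (3) via applying $\sigma$ to $(x\odot y)\to z=x\to(y\to z)$ and substituting $z=\sigma(x)\odot\sigma(y)$ (a fixed point of $\sigma$ by $(4)_{BL}$), then invoking Lemma \ref{3new}(d) for the reverse inequality. No gaps.
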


\begin{proof}
(1)  Let $x,y\in A$ be fixed.
For the direct implication, we use Lemma \ref{3new}$(f)$ and we get $\sigma(x\wedge y) = \sigma(x)\odot \sigma(x\rightarrow y) = \sigma(x)\odot (\sigma(x)\rightarrow \sigma(y)) = \sigma(x)\wedge \sigma(y)$.\\
For the converse implication, we have: $\sigma(x\rightarrow y) =
\sigma(x)\rightarrow \sigma(x\wedge y) = \sigma(x)\rightarrow
(\sigma(x)\wedge \sigma(y)) = \sigma(x)\rightarrow \sigma(y)$, by
Lemma \ref{4}(4).

 (2) Assume $\sigma(x\to y) = \sigma(x)\to
\sigma(y)$ for all $x,y \in A.$ Using the identity $x\vee y =
[(x\rightarrow y)\rightarrow y] \wedge [(y\rightarrow x)\rightarrow
x],$  then from (1) we have that $\sigma$ preserves all meets in
$A.$ It is straightforward now that $\sigma(x\vee y) = \sigma(x)\vee
\sigma(y)$.

Conversely, assume that $\sigma$ preserves all meets in $A.$  Due to
the identity $(x\vee y)\to y = x\to y,$ we have $\sigma(x\to y) =
\sigma((x\vee y)\to y) = \sigma(x\vee y) \to \sigma((x\vee y)\wedge
y) = (\sigma(x)\vee \sigma(y)) \to \sigma(y) = \sigma(x)\to
\sigma(y).$

(3) $\sigma(x\odot y)\to \sigma(z)=\sigma(x\odot y \to z) =\sigma(x
\to(y\to z)) = \sigma(x)\to (\sigma(y) \to \sigma(z)) =
(\sigma(x)\odot \sigma(y)) \to \sigma(z).$

Hence, if $z=\sigma(x)\odot \sigma(y)$, we have $\sigma(x\odot y)
\to \sigma(\sigma(x)\odot \sigma(y))= \sigma(x)\odot \sigma(y) \to
\sigma(\sigma(x)\odot \sigma(y))= \sigma(x)\odot \sigma(y)\to
\sigma(x)\odot \sigma(y)=1.$ This yields $\sigma(x\odot y) \to
\sigma(\sigma(x)\odot \sigma(y))= \sigma(x\odot y) \to
\sigma(x)\odot \sigma(y)= 1$ and whence $\sigma(x\odot y) \le
\sigma(x)\odot \sigma(y).$ The converse inequality follows from
Lemma \ref{3new}$(d).$

\end{proof}

We note that from the proof of (2) of the previous proposition we
have that if $x,y \in A$ are fixed and $\sigma(x\vee y) =
\sigma(x)\vee \sigma(y),$ then $\sigma(x\to y) = \sigma(x)\to
\sigma(y).$

\begin{lemma}\label{linear}
Let $(A, \sigma)$ be a linearly ordered state BL-algebra. Then for $x, y\in A,$ we have:\\
$(1)$ $\sigma(x\rightarrow y) = \sigma(x)\rightarrow \sigma(y)$;\\
Moreover, if $(A, \sigma)$ is strong, we have:\\
$(2)$ $\sigma(x\odot y) = \sigma(x)\odot \sigma(y)$.

\end{lemma}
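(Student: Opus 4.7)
My plan is to exploit linearity to reduce each claim to a case that is already handled in Lemma~\ref{3new}. For part (1), observe that since $A$ is totally ordered, any two elements $x, y \in A$ are comparable. The second half of Lemma~\ref{3new}$(g)$ therefore applies directly to every such pair and yields $\sigma(x\to y) = \sigma(x)\to \sigma(y)$ without further work.

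For part (2), I would use the strong axiom $(3')_{BL}$, which reads $\sigma(x\odot y) = \sigma(x)\odot \sigma(x^-\vee y)$, and split on whether $x^-\le y$ or $y \le x^-$. In the first case $x^-\vee y = y$, so $(3')_{BL}$ already gives the desired $\sigma(x\odot y) = \sigma(x)\odot \sigma(y)$. In the second case $y\le x^-$ forces $x\odot y \le x\odot x^- = 0$ (recall that $x\odot x^- = 0$ holds in every BL-algebra as an instance of residuation, axiom $(1)$ of Definition~\ref{def:BL}), hence $x\odot y = 0$, and the second half of Lemma~\ref{3new}$(d)$ then gives $\sigma(x)\odot \sigma(y) = \sigma(x\odot y) = 0$.

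There is no real obstacle here: both parts dissolve into the earlier lemmas once linearity is applied at the right moment. The only tiny subtlety is noticing that when $y<x^-$ the left hand side of (2) automatically vanishes, which is exactly what makes the inequality from Lemma~\ref{3new}$(d)$ tight without any additional argument. As a side remark, (2) can in fact also be derived from (1) together with Lemma~\ref{arrow}(3), which shows that any $\sigma$ preserving $\to$ automatically preserves $\odot$; that route bypasses the strong hypothesis entirely, suggesting that the strongness assumption in (2) is redundant, although the direct argument via $(3')_{BL}$ sketched above is the most natural one.
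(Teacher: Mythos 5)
Your proof is correct and follows the paper's own argument: part (1) is exactly the observation that linearity makes every pair comparable so that Lemma \ref{3new}$(g)$ applies, and part (2) uses the same exhaustive case split on $x^-\le y$ versus $y\le x^-$, handling the first case by $(3')_{BL}$ (equivalently Lemma \ref{3}$(a)$) and the second by noting $x\odot y=0$ and invoking the equality clause of Lemma \ref{3new}$(d)$. Your side remark is also sound and worth noting: combining part (1) with Lemma \ref{arrow}(3) does show that every state-operator on a linearly ordered BL-algebra preserves $\odot$ without the strongness hypothesis, which is precisely the strengthening the paper only reaches later in Proposition \ref{pr:3.23} via the much longer Aglian\`o--Montagna ordinal-sum decomposition.
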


\begin{proof}
$(1)$ It is a direct consequence of $(g)$ in Lemma \ref{3new}.\\
$(2)$ Assume $x\leq y$. (The case $y\leq x$ is can be treated
similarly.) Then $y^-\leq x^-$ and we have the following two cases:
(I) $x^-\leq y$ and (II) $y\leq x^-$. Case (I) follows from
condition $(a)$ of Lemma \ref{3}. In case (II) we have $x\odot y\leq
x\odot x^- = 0$ and $\sigma(x^-) = \sigma(x)^-\geq \sigma(y)$. Thus
$\sigma(x\odot y) = \sigma(0) = 0 = \sigma(x)\odot \sigma(x)^-\geq
\sigma(x)\odot \sigma(y)$.
\end{proof}

\begin{Def}(\cite{FlMo0,FlMo})
A state MV-algebra is a pair $(M, \sigma)$ such that $(M,\oplus,\odot,^-,0,1)$ is
an MV-algebra and $\sigma$ is a unary operation on $M$ satisfying:\\
$(1)_{MV}$ $\sigma(1)=1;$\\
$(2)_{MV}$ $\sigma(x^-)=\sigma(x)^-;$\\
$(3)_{MV}$ $\sigma(x\oplus y)=\sigma(x)\oplus \sigma(y\ominus (x\odot y));$\\
$(4)_{MV}$ $\sigma(\sigma(x)\oplus \sigma(y))=\sigma(x)\oplus \sigma(y)$\\
for any $x,y$ in $M.$ The operator $\sigma$ is said to be a {\it
state-MV-operator.}
\end{Def}

We recall  that if $(M,\oplus,\odot,^-,0,1)$ is an MV-algebra, then
$(M,\wedge,\vee,\odot,\to, 0,1),$ where $x\rightarrow y  :=
x^-\oplus y,$ is a BL-algebra satisfying the identity $x^{--}=x.$
Conversely, if $(M,\wedge,\vee,\odot,\to, 0,1)$ is a BL-algebra with
the additional identity $x^{--}=x$, then $(M,\oplus,\odot,^-,0,1)$
is an MV-algebra, where $x\oplus y:= (x^-\odot y^-)^-$ and
$x^-:=x\to 0.$

We recall that the following operations hold in any MV-algebra $M$:\\
$x\ominus y=(x^-\oplus y)^-$ and $x\odot y=(x^-\oplus y^-)^-$ for
any $x,y\in M.$

In what follows, we show that if a BL-algebra is termwise equivalent
to an MV-algebra, then a state-operator $\sigma$ on $M$ taken in the
BL-setup coincides with the notion of a  state-MV-operator in the
MV-setup given by Flaminio and Montagna in \cite{FlMo0,FlMo}, and
vice-versa.

\begin{prop}\label{pr:MV} Let $M$ be an MV-algebra. Then a mapping
$ \sigma:M \to M$ is a state-MV-operator on $M$ if and only if
$\sigma$ is a state-operator on $M$ taken as a BL-algebra. In
addition, in such a case, $\sigma$ is always a strong
state-operator.
\end{prop}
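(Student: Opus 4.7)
The plan is to translate between the two axiom systems purely by algebraic manipulation, using the MV-identities $x^{--}=x$, $x\to y=x^-\oplus y$, $x\odot y=(x^-\oplus y^-)^-$, together with the bridging identity $x\to(x\odot y)=x^-\vee y$ noted in Remark \ref{re:3.6} and its de Morgan dual $y\ominus(x\odot y)=y\wedge x^-$ (which itself follows from $y\wedge x^-=y\odot(y\to x^-)=y\odot(x\odot y)^-$). It is this last identity that makes the Flaminio--Montagna axiom $(3)_{MV}$ recognizable as the strong BL-axiom $(3')_{BL}$ in disguise, and is really the only content of the whole statement.

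For the forward implication, suppose $\sigma$ satisfies $(1)_{MV}$--$(4)_{MV}$. Axiom $(1)_{BL}$ comes from $(1)_{MV}$ and $(2)_{MV}$ applied to $1^-=0$. For $(2)_{BL}$ I rewrite $\sigma(x\to y)=\sigma(x^-\oplus y)$, apply $(3)_{MV}$, and use $y\ominus(x^-\odot y)=y\wedge x$ to land on $\sigma(x)^-\oplus\sigma(x\wedge y)$. For the strong axiom $(3')_{BL}$ I compute $\sigma(x\odot y)=\sigma(x^-\oplus y^-)^-$ by $(2)_{MV}$, apply $(3)_{MV}$ to $\sigma(x^-\oplus y^-)$, and push the outer negation through with de Morgan to obtain $\sigma(x)\odot\sigma(x^-\vee y)$; Proposition \ref{pr:3.8} then yields $(3)_{BL}$ as well. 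Axioms $(4)_{BL}$ and $(5)_{BL}$ reduce to $(4)_{MV}$ after rewriting $\sigma(x)\odot\sigma(y)$ as $(\sigma(x^-)\oplus\sigma(y^-))^-$ and $\sigma(x)\to\sigma(y)$ as $\sigma(x^-)\oplus\sigma(y)$: the inner $\oplus$-term is fixed by $(4)_{MV}$, and $(2)_{MV}$ transports the outer negation through $\sigma$.

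For the converse, suppose $\sigma$ satisfies the BL-axioms. Then $(1)_{MV}$ and $(2)_{MV}$ are Lemma \ref{3new}(a) and (b). I derive $(3)_{MV}$ by running the forward computation backwards: $\sigma(x\oplus y)=\sigma(x^-\odot y^-)^-$ by Lemma \ref{3new}(b), then $\sigma(x^-\odot y^-)=\sigma(x)^-\odot\sigma(x\vee y^-)$ using $(3)_{BL}$ together with the identity $x^-\to(x^-\odot y^-)=x\vee y^-$, and finally de Morgan identifies the result as $\sigma(x)\oplus\sigma(x^-\wedge y)=\sigma(x)\oplus\sigma(y\ominus(x\odot y))$. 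Axiom $(4)_{MV}$ mirrors $(4)_{BL}$ by the same negation trick: expand $\sigma(x)\oplus\sigma(y)=(\sigma(x^-)\odot\sigma(y^-))^-$, apply $(4)_{BL}$ to fix the inner $\odot$-term, and transport the negation through $\sigma$ using Lemma \ref{3new}(b). The strong-operator assertion comes for free, since the computation verifying $(3)_{BL}$ in the forward direction already produces $(3')_{BL}$ directly. The main obstacle is purely bookkeeping --- keeping track of the many small uses of de Morgan and of the two forms of the bridging identity, without conflating $\ominus$ with $\to$ or $\oplus$ with $\vee$.
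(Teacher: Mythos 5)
Your proposal is correct and follows essentially the same route as the paper: both directions are handled by the same algebraic translation, using $x\to y=x^-\oplus y$, de Morgan, and the bridging identity $x\to(x\odot y)=x^-\vee y$ of Remark \ref{re:3.6} to match $(3)_{MV}$ with $(3)_{BL}$/$(3')_{BL}$, and the negation-substitution trick to match $(4)_{MV}$ with $(4)_{BL}$ and $(5)_{BL}$. The only cosmetic difference is that you obtain $(3)_{BL}$ from $(3')_{BL}$ via Proposition \ref{pr:3.8}, where the paper continues the computation one more step; the content is identical.
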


\begin{proof} Let $\sigma$ be a state-operator on $M$ taken as a
BL-algebra. We recall that then $x^{--}=x$ for each $x \in M.$

Axiom $(1)_{MV}$ is property $(a)$ in Lemma \ref{3new}, and
$(2)_{MV}$ is $(b)$ in the same Lemma.

We prove that axiom $(3)_{BL}$ together with the condition
$x^{--}=x$ gives axiom $(3)_{MV}$. First note that in a BL-algebra
satisfying $x^{--}=x$ we have:
\begin{eqnarray*}
x^-\odot (x\oplus y)&=&x^-\odot (x^{--}\oplus y)=x^-\odot (x^-\rightarrow y)=x^-\wedge y=y\wedge x^-\\
&=&y\odot (y\rightarrow x^-)=y\odot (x^-\oplus y^-)=y\odot (x\odot y)^-,
\end{eqnarray*}
using $x\rightarrow y=x^-\oplus y$
(since $x\rightarrow y=x\rightarrow y^{--}=
(x\odot y^-)^-=(x^{--}\odot y^-)^-=x^-\oplus y$)).\\
So
$$\sigma(x^-\odot (x\oplus y))=\sigma(y\odot (x\odot
y)^-).\eqno(*)
$$
Now replacing $x,y$ by $x^-,y^-$ respectively in
axiom $(3)_{BL}$, we get $\sigma(x^-\odot y^-)=\sigma(x^-)\odot
\sigma(x^-\rightarrow x^-\odot y^-)$ which becomes after negation:
$\sigma(x\oplus y)=\sigma(x)\oplus  \sigma(x^-\rightarrow x^-\odot
y^-)^-$ Using $(*)$ we get
\begin{eqnarray*}
\sigma(x\oplus y)&=&\sigma(x)\oplus \sigma(x^-\rightarrow(x\oplus
y)^-)^-=\sigma(x)\oplus \sigma(x^{--}\oplus (x\oplus
y)^-)^-\\&=&\sigma(x)\oplus \sigma(x^-\odot (x\oplus
y))=\sigma(x)\oplus (y\odot (x\odot y)^-)=\sigma (x)\oplus
\sigma(y\ominus (x\odot y)).
\end{eqnarray*}
And thus we obtain axiom $(3)_{MV}$.

We now prove that  axiom $(4)_{BL}$ is in fact axiom $(4)_{MV}$.

Replacing $x,y$ by $x^-, y^-$ respectively in axiom $(4)_{BL}$ we
obtain: $\sigma(\sigma(x^-)\odot \sigma(y^-))=\sigma(x^-)\odot
\sigma(y^-),$ and using $(b),$ we get $\sigma(\sigma(x)^-\odot
\sigma(y)^-)=\sigma(x)^-\odot \sigma(y)^-$. Negating this identity
we obtain $\sigma(\sigma(x)\oplus \sigma(y))=\sigma(x)\oplus
\sigma(y),$ which is axiom $(4)_{MV}.$

Conversely, let $\sigma$ be a state-MV-operator on $M.$ Then:

\noindent $(1)_{BL}:$\ \ $\sigma(0) = 0.$

\noindent $(2)_{BL}:$ \ \ $\sigma(x\rightarrow y) = \sigma(x^-\oplus
y) = \sigma(x^-)\oplus \sigma(y\ominus x^-\odot y) = \sigma(x^-)
\oplus \sigma(y\odot (x^-\odot y^-)^-) = \sigma(x)^-\oplus
\sigma(y\odot (x\oplus y)^-) = \sigma(x)^- \oplus \sigma(y\wedge x)
= \sigma(x)\rightarrow \sigma(x\wedge y).$

\noindent $(3)_{BL}:$\ \ $\sigma(x\odot y) = \sigma((x^-\oplus
y^-)^-) = [\sigma(x^-\oplus y^-)]^- = [\sigma(x^-)\oplus
\sigma(y^-\ominus x^-\odot y^-)]^- = [\sigma(x)^- \oplus
\sigma(y^-\odot (x^-\odot y^-)^-)]^- = [\sigma(x)^-\oplus
\sigma(y\oplus x^-\odot y^-)^-]^- = \sigma(x)\odot \sigma(y\oplus
x^-\odot y^-) = \sigma(x)\odot \sigma(y\vee x^-) = \sigma(x)\odot
\sigma(x^-\oplus x\odot y) = \sigma(x)\odot \sigma(x\rightarrow
x\odot y).$

\noindent $(4)_{BL}:$\ \ $\sigma(\sigma(x)\odot \sigma(y)) =
\sigma((\sigma(x)^-\oplus \sigma(y)^-)^-) = \sigma(\sigma(x^-)\oplus
\sigma(y^-))^- = (\sigma(x^-)\oplus \sigma(y^-))^- =
(\sigma(x)^-\oplus \sigma(y)^-)^- = \sigma(x)\odot \sigma(y).$

\noindent $(5)_{BL}:$\ \ $\sigma(\sigma(x)\rightarrow \sigma(y)) =
\sigma(\sigma(x)^-\oplus \sigma(y)) = \sigma(\sigma(x^-)\oplus
\sigma (y)) = \sigma(x^-)\oplus \sigma(y) = \sigma(x)^-\oplus
\sigma(y) = \sigma(x)\rightarrow \sigma(y)$.

Finally, due to Remark \ref{re:3.6} and $(3')_{BL},$ we see that
$\sigma$ is always a strong state-operator.
\end{proof}

We recall that if $M$ is an MV-algebra and if $x \in M,$ we define
$0\cdot x =0,$ $1\cdot x = x,$ and if $n\cdot x \le x^-,$ we set
$(n+1)\cdot x = (n\cdot x)\oplus x.$  Similarly, if $a \le b^-,$ we
set $a+b:= a\oplus b.$

Therefore, if $\sigma$ is a state-MV-operator on $M$ and  $a+b$ is
defined in $M,$ then $\sigma(a)+\sigma(b)$ is also defined and
$\sigma(a+b)=\sigma(a)+\sigma(b).$  Similarly, $\sigma(n\cdot x) =
n\cdot \sigma(x).$

\begin{Def}
A {\it morphism-state-operator} on a BL-algebra $A$ is a mapping
$\sigma:A\to A$ satisfying $(1)_{BL},(2)_{BL}, (4)_{BL}, (5)_{BL}$
and \\
$(6)_{BL}$ $\sigma(x\odot y)=\sigma(x)\odot \sigma(y)$ for any
$x,y\in A.$\\
The couple $(A, \sigma)$ is called a {\it state-morphism
BL-algebra}.
\end{Def}

We introduce also an additional property:\\
$(7)_{BL}$ $\sigma(x\to y)=\sigma(x)\to \sigma(y)$ for any $x,y\in
A.$

If $A$ is an MV-algebra, then  $(6)_{BL}$ and $(7)_{BL}$ are
equivalent.  In addition, the property ``a state-operator $\sigma$
satisfies  $(7)_{BL}$" is equivalent to the property $\sigma$ is an
endomorphism of $A$ such that $\sigma^2=\sigma,$ see Lemma
\ref{arrow}(3).

%? WHAT ABOUT WHEN $(6)_{BL}$ ONLY ?

%{\bf REMARK}  Probably it is enough to assume only $(6)_{BL}$.

\begin{rem}
The state-operators defined in Examples \ref{ex:1}--\ref{ex:2} and
Example \ref{4elemEx} are state-morphism-operators that are also
endomorphisms. Additional examples are in the next section.
\end{rem}

\begin{prop}\label{pr:3.16} Every state-morphism BL-algebra
$(A,\sigma)$ is a strong state BL-algebra, but  the converse is not
true, in general.
\end{prop}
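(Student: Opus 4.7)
The proposition decomposes into a forward implication and a counterexample to its converse, and I would handle these separately. The forward direction is essentially a one-line identity manipulation; the genuine work is producing an explicit strong state-operator that fails to be multiplicative.

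For the forward direction, axioms $(1)_{BL}$, $(2)_{BL}$, $(4)_{BL}$, and $(5)_{BL}$ appear in both definitions, so only $(3')_{BL}$ remains to be verified: $\sigma(x\odot y) = \sigma(x)\odot \sigma(x^- \vee y)$. The pivotal identity is $x\odot (x^- \vee y) = (x\odot x^-) \vee (x\odot y) = 0 \vee (x\odot y) = x\odot y$, which holds in any BL-algebra since $\odot$ distributes over $\vee$ by residuation. Applying $\sigma$ to both ends and using $(6)_{BL}$ gives $\sigma(x\odot y) = \sigma(x\odot (x^- \vee y)) = \sigma(x)\odot \sigma(x^- \vee y)$, which is exactly $(3')_{BL}$.

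For the converse, the plan is to invoke Proposition \ref{pr:MV}: any state-MV-operator on an MV-algebra is automatically a strong state-operator when the MV-algebra is viewed as a BL-algebra. Hence it suffices to exhibit a state-MV-operator that is not multiplicative. A canonical choice is the averaging operator on the product MV-algebra $A = [0,1]\times [0,1]$ (with \L ukasiewicz operations), defined by $\sigma(x,y) = ((x+y)/2,(x+y)/2)$. One verifies directly that $\sigma$ satisfies $(1)_{MV}$--$(4)_{MV}$; the only delicate axiom, $(3)_{MV}$, follows because the images of $\sigma$ lie on the diagonal, where the MV-operations behave linearly in each coordinate. Taking $x = (1,0)$ and $y = (1/2,1/2)$ yields $x\odot y = (1/2, 0)$ and $\sigma(x\odot y) = (1/4,1/4)$, while $\sigma(x)\odot \sigma(y) = (1/2,1/2)\odot (1/2,1/2) = (0,0)$, so $(6)_{BL}$ fails.

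The main subtlety I anticipate is that the counterexample must live on a non-linear BL-algebra: by Lemma \ref{linear}(2) every strong state-operator on a linear BL-algebra is already multiplicative, so linear examples cannot separate the two notions. The product construction above is the simplest way around this, and routing through Proposition \ref{pr:MV} avoids a direct verification of the full BL-algebra axiom list.
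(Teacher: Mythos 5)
Your proof is correct, and the forward direction is exactly the paper's argument: both rest on the identity $x\odot y=x\odot(x^-\vee y)$, to which one applies $(6)_{BL}$ to obtain $(3')_{BL}$ (you additionally justify the identity via distributivity of $\odot$ over $\vee$ and $x\odot x^-=0$, which the paper leaves implicit).

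The difference lies in the converse. The paper routes through Proposition \ref{pr:MV} and then simply cites \cite[Thm 3.2]{DiDV1} for the existence of state-MV-operators that are not state-morphism-operators. You take the same conceptual route (an MV-algebra counterexample lifted via Proposition \ref{pr:MV}) but make it self-contained by exhibiting the averaging operator $\sigma(x,y)=((x+y)/2,(x+y)/2)$ on $[0,1]^2$; this is precisely the type of example behind the cited theorem, namely the state-MV-operator $\sigma_s(a)=(s(a),\dots,s(a))$ induced by a non-extremal state $s$. Your numerical check of the failure of $(6)_{BL}$ at $x=(1,0)$, $y=(1/2,1/2)$ is correct: $\sigma(x\odot y)=(1/4,1/4)\neq(0,0)=\sigma(x)\odot\sigma(y)$. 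The verification of $(3)_{MV}$ is only sketched ("the operations behave linearly on the diagonal"), but it does reduce to the standard state identity $s(a\oplus b)=s(a)+s(b\ominus(a\odot b))$, so the gap is cosmetic. Your closing observation that any counterexample must be non-linear is consistent with the corollary following this proposition in the paper. What your version buys is independence from the external reference; what the paper's version buys is brevity.
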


\begin{proof} Since $\sigma$ preserves $\odot,$ due to the identity
holding in any BL-algebra, $x\odot y= x\odot (x^-\vee y),$ $x,y \in
A,$ we have $\sigma$ is a strong state-operator on $A.$

Due to Proposition \ref{pr:MV}, if $A$ is an MV-algebra, then a
strong state-operator on $A$ is  a state-MV-operator and vice-versa.
In \cite[Thm 3.2]{DiDV1}, there are examples of state-MV-operators
that are not state-morphism-operators.
\end{proof}

\begin{cor}
Any linearly ordered strong state BL-algebra $(A,\sigma)$ is a
state-morphism BL-algebra.
\end{cor}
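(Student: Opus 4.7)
The plan is to observe that a state-morphism BL-algebra, by Definition, is precisely a strong state BL-algebra that also satisfies the multiplicativity axiom $(6)_{BL}$, namely $\sigma(x\odot y)=\sigma(x)\odot\sigma(y)$. Indeed, axioms $(1)_{BL},(2)_{BL},(4)_{BL},(5)_{BL}$ are already part of the definition of a strong state-operator, and the remaining axiom $(3')_{BL}$ in the strong setup is readily implied by $(6)_{BL}$ together with $(2)_{BL}$, since preservation of $\odot$ and $\rightarrow$ forces preservation of $\vee$ and $\wedge$ by Lemma \ref{arrow}. So the entire content of the corollary is to verify $(6)_{BL}$ under the linearity hypothesis.

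But this is exactly the statement of Lemma \ref{linear}(2): for a linearly ordered strong state BL-algebra, $\sigma(x\odot y)=\sigma(x)\odot\sigma(y)$ holds for all $x,y\in A$. Thus the proof reduces to invoking this lemma. To keep the exposition self-contained, I would briefly recall the case split used there: given $x,y\in A$, linearity lets us assume $x\le y$; then either $x^-\le y$, in which case Lemma \ref{3}(a) supplies the equality directly from $(3')_{BL}$, or else $y\le x^-$, in which case $x\odot y\le x\odot x^-=0$ and simultaneously $\sigma(y)\le\sigma(x^-)=\sigma(x)^-$, yielding $\sigma(x)\odot\sigma(y)=0=\sigma(0)=\sigma(x\odot y)$.

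Hence $\sigma$ satisfies $(6)_{BL}$ in addition to $(1)_{BL},(2)_{BL},(4)_{BL},(5)_{BL}$, which is precisely the condition for $(A,\sigma)$ to be a state-morphism BL-algebra. There is no real obstacle in this argument: the work has already been done in Lemma \ref{linear}(2); the corollary is simply the reformulation of that lemma in the language of state-morphism operators, so the only care needed is the bookkeeping of axioms to confirm that $(6)_{BL}$ is the single missing ingredient between ``strong state'' and ``state-morphism''.
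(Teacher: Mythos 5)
Your proof is correct and is essentially the paper's own argument: the corollary is exactly Lemma \ref{linear}(2), since a strong state-operator already satisfies $(1)_{BL},(2)_{BL},(4)_{BL},(5)_{BL}$ and only $(6)_{BL}$ needs to be verified. (Your aside that $(6)_{BL}$ implies $(3')_{BL}$ is not needed for this direction of the implication, and its justification via Lemma \ref{arrow} is slightly off --- the paper derives it from the identity $x\odot y=x\odot(x^-\vee y)$ in Proposition \ref{pr:3.16} --- but this does not affect the argument.)
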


\begin{proof}
It follows from Lemma \ref{linear}(2).
\end{proof}

The latter result will be strengthened in Proposition \ref{pr:3.23}
for any state-operator on a linearly ordered BL-algebra proving that
then it is an endomorphism.

\begin{rem} Let $\sigma$ be a state-operator on a BL-algebra $A$.  (i) If $\sigma$
preserves $\to$, then $\sigma$ is a state-morphism-operator, Lemma
\ref{arrow}(3), (ii) every state-morphism-operator is always a strong
state-operator, Proposition \ref{pr:3.16},  and (iii) every strong
state-operator is a state-operator, Proposition \ref{pr:3.8}.
\end{rem}

\begin{opprob} (1) Does there exists a state-operator that is not strong
?

(2) Does any state-morphism-operator preserve $\to$ ?
\end{opprob}

Some partial answers are presented in the next section.

\section{Examples of State-Operators}%4

In the present section, we describe some examples of BL-algebras
when every state-operator is even an endomorphism.

First, we describe some state-operators on a finite BL-algebra.

We recall that an element $a$ of a BL-algebra $A$ is said to be {\it
idempotent} if $a \odot a = a.$  Let $\mathrm{Id}(A)$ be the set of
idempotents of $A.$  If $a$ is idempotent, then \cite[Prop
3.1]{DvKo} (i) $x\odot a = a \wedge x $ for any $x \in A,$ (ii) the
filter $F(a)$ generated by $a$ is the set $F(a) = [a,1]:=\{z\in M:\
a\le z \le 1\}.$ In addition, (iii)  $a\in A$ is idempotent iff
$F(a)=[a,1],$ (iv) $\mbox{Id}(A)$ is a subalgebra of $A$ \cite[Cor
3.6]{DvKo}, and (v) if $A$ is finite, then every filter $F$ of $A$
is of the form $F = F(a)$ for some idempotent $a \in A.$

Let $n\ge 1$ be an integer, we denote by
$S_n=\Gamma(\frac{1}{n}\mathbb Z,1)=\{0,1/n,\ldots,n/n\}$  an
MV-algebra. If we set $x_i=i/n,$ then $x_i\odot x_j = x_{(i+j-n)\vee
0}$ and $x_i\to x_j = x_{(n-i+j)\wedge n}$ for $i,j=0,1,\ldots, n.$
If $A$ is a finite MV-algebra, then due to \cite{Cig}, $A$ is a
direct product of finitely many chains, say $S_{n_1},\ldots,
S_{n_k}.$  By \cite{DDL}, every state-MV-operator on a finite
MV-algebra preserves $\odot$ and $\to.$

Let us recall the notion of an ordinal sum of BL-algebras. For two
BL-algebras $A_1$ and $A_2$ with $A_1\cap A_2=\{1\}$, we set $A =
A_1\cup A_2$. On $A$ we define the operations $\odot$ and $\to$  as
follows

$$
\begin{array}{ll}
x \odot y & =\left\{\begin{array}{ll} x \odot_i y & \quad
\mbox{if}\quad x,y\in A_i,\ i=1,2,  \\
 x& \quad \mbox{if}\quad x\in A_1\setminus\{1\},\ y\in A_2,\\
y &\quad \mbox{if}\quad  x \in A_2,\ y \in A_1\setminus\{1\},
\end{array}
\right.
%$$
\\
\\
\\
%$$
x \rightarrow y  &=\left\{\begin{array}{ll} x \rightarrow_i y &
\quad
\mbox{if}\quad x,y\in A_i, \ i=1,2, \\
 y& \quad \mbox{if}\quad   x\in A_2, \ y \in A_1 \setminus\{1\},\ \\
1& \quad \mbox{if}\quad x \in A_1 \setminus\{1\},\ y \in A_2.\
\end{array}
\right.
\end{array}
$$
Then $A$ is a BL-algebra iff $A_1$ is linearly ordered, and we
denote the ordinal sum $A= A_1 \oplus A_2.$  We can easily extend
the ordinal sum for finitely many summands. In addition, we can do
also the ordinal sum of an infinite system $\{A_i\mid i\in I\}$ of
BL-algebras, where $I$ is a totally ordered set with the least
element $0$ and the last $1$ (to preserve  the prelinearity
condition (3) of Definition \ref{def:BL},  all $A_i$ for $i<1$ have
to be linear BL-algebras).

In \cite{DiLe}, it was shown that every finite BL-algebra is a
direct product of finitely many comets. We note that a {\it comet}
is a finite BL-algebra $A$ of the form $A= A_1\oplus A_2,$ where
$A_1$ is a finite BL-chain, i.e. an ordinal sum of finitely many
MV-chains $S_{n_1},\ldots, S_{n_k},$ and $A_2$ is a finite
MV-algebra.

Let $0_1$ be the least element of $A_1$ and, for any  $x \in A_1,$
we set $x^*= x\to 0_1.$

\begin{lemma}\label{le:3.18}
Let $A = S_n\oplus A_1,$ where  $A_1$ is an arbitrary  BL-algebra,
where $n \ge 1.$  If $\sigma$ is a state-operator on $A$, then
$\sigma(x_i)=x_i,$ where $x_i \in S_n$ for any $i=0,1,\ldots, n,$
and $\sigma$ maps $A_1$ to $A_1.$

Let $\sigma_A$ be the restriction of $\sigma$ onto $A_1.$  Then
\begin{enumerate}

\item[{\rm (i)}]
$\sigma_A(1)=1,$ $\sigma_A(0_1)\le \sigma_A(x)$ for any $x \in A_1;$

\item[{\rm (ii)}]  $\sigma_A(x^*) = \sigma_A(x)\to \sigma_A(0_1);$

\item[{\rm (iii)}] $\sigma_A(0_1)$ is idempotent;

\item[{\rm (iv)}] all conditions $(2)_{BL}-(5)_{BL}$ are true for
$\sigma_A.$
\end{enumerate}
Conversely, if $\sigma_A$ is a mapping from $A_1$ into $A_1$ such
that it satisfies {\rm (iii)}--{\rm(iv)}, then the mapping $\sigma:
A\to A$ defined by $\sigma(x) = \sigma_A(x)$ if $x \in A_1$ and
$\sigma(x)=x$ if $x \in S_n$ is a state-operator on $A.$

\end{lemma}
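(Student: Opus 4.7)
The plan hinges on two structural observations about the ordinal sum $A = S_n \oplus A_1$. First, the elements of $S_n \setminus \{1\}$ are exactly the elements of $A$ of finite order, since iterating $\odot$ on any $a \in A_1 \setminus \{1\}$ stays inside $A_1$ and hence never reaches $0 = 0_{S_n}$. Second, $A_1 = \{x \in A : x^- = 0\}$, because the ordinal-sum rule forces $a \to 0 = 0$ for every $a \in A_1 \setminus \{1\}$, whereas in the chain $S_n$ only $x = 1$ satisfies $x^- = 0$. With these in hand, to establish $\sigma|_{S_n} = \mathrm{id}$ one uses Lemma \ref{3new}(m), which says $\sigma$ preserves finite order; thus $\sigma(S_n) \subseteq S_n$ and the restriction is a state-operator on the chain $S_n$. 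Its kernel is a filter of the simple algebra $S_n$ (Lemma \ref{100}); since $\sigma(0) = 0 \ne 1$, this filter is forced to be $\{1\}$, so $\sigma|_{S_n}$ is faithful, and Lemma \ref{3new}(r) forces it to be the identity.

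For the inclusion $\sigma(A_1) \subseteq A_1$, let $a \in A_1$; then Lemma \ref{3new}(b) gives $\sigma(a)^- = \sigma(a^-) = \sigma(0) = 0$, so $\sigma(a) \in A_1$ by the second characterization. Setting $\sigma_A := \sigma|_{A_1}$, properties (i) and (ii) follow at once from Lemma \ref{3new}(a), (c), and from axiom $(2)_{BL}$ at $(x, 0_1)$ together with $x \wedge 0_1 = 0_1$ for $x \in A_1$. The axioms in (iv) for $\sigma_A$ transfer verbatim from those of $\sigma$ because $A_1$ is closed under all BL-operations and $\sigma$ preserves $A_1$. For idempotence (iii), set $b := \sigma_A(0_1) \in A_1$; then $b \ge 0_1$ gives $b^2 \ge 0_1$, axiom $(4)_{BL}$ yields $\sigma(b^2) = b^2$, and monotonicity produces $b = \sigma(0_1) \le \sigma(b^2) = b^2$; combined with the universal inequality $b^2 \le b$ this forces $b = b^2$.

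The converse is handled by a routine case analysis on whether each argument of each axiom lies in $S_n$ or in $A_1$: for $x, y \in S_n$ both sides of every axiom reduce to identities in $S_n$ (using, e.g., $x \to (x \wedge y) = x \to y$ from Proposition \ref{4}(4)); for $x, y \in A_1$ the axioms are exactly hypothesis (iv); and in the mixed cases the ordinal-sum absorption rules collapse each operation between an element of $S_n \setminus \{1\}$ and an element of $A_1$ to the $S_n$-argument, whence both sides of each axiom yield the same value independently of the particular value of $\sigma_A$ on the $A_1$-argument. The principal obstacle is purely organizational: keeping track of the several mixed subcases for each of $(2)_{BL}$--$(5)_{BL}$ and invoking the right absorption rule, while noting that idempotence (iii), though listed as a separate hypothesis, is in fact already forced by (iv) together with $\sigma_A(A_1) \subseteq A_1$ via the same monotonicity argument used in the forward direction.
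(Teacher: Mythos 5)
Your proof is correct, but you reach the key first claim ($\sigma$ fixes $S_n$ pointwise and maps $A_1$ into $A_1$) by a different route than the paper. For the containment $\sigma(S_n)\subseteq S_n$ the paper argues by contradiction: if $\sigma(x_i)\in A_1$, then the ordinal-sum rule gives $\sigma(\sigma(x_i)\to x_i)=\sigma(x_i)<1$, while Lemma \ref{3new}$(g)$ (comparability) and $(j)$ give $\sigma(\sigma(x_i)\to x_i)=\sigma(x_i)\to\sigma(x_i)=1$. You instead observe that the finite-order elements of $A$ are exactly $S_n\setminus\{1\}$ and invoke Lemma \ref{3new}$(m)$; this is cleaner and avoids the ordinal-sum implication tables at this step. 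For the identity $\sigma|_{S_n}=\mathrm{id}$, the paper passes through Proposition \ref{pr:MV} to view the restriction as a state-MV-operator and uses additivity on $x_i=i\cdot x_1$ to pin down $\sigma(x_1)=x_1$; you note instead that $S_n$ is simple, so the kernel of the restriction (a filter not containing $0$) must be $\{1\}$, and then Lemma \ref{3new}$(r)$ applies. Both arguments are valid and rest only on material preceding the lemma; yours essentially reproves, in this special case, what becomes Proposition \ref{pr:4.8} later in the section. Your derivation of $\sigma(A_1)\subseteq A_1$ from $\sigma(a)^-=\sigma(a^-)=0$ and the characterization $A_1=\{x:x^-=0\}$ is the same idea as the paper's, as are (i)--(iv) and the idempotence argument for $\sigma_A(0_1)$. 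For the converse the paper offers only ``straightforward''; your case-by-case sketch, with the absorption rules handling the mixed cases independently of the values of $\sigma_A$, is an adequate outline and I verified the individual cases go through. Your closing remark that (iii) is in fact implied by (iv) (since monotonicity of $\sigma_A$ already follows from $(3)_{BL}$ alone) is correct and is a small sharpening of the statement.
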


\begin{proof}  First we show that $\sigma(S_n)\subseteq S_n.$  If
not, then there is $x_i \in S_n$ such that $\sigma(x_n)\notin S_n$
and whence $1>\sigma(x_i)\in A_1.$  Check $\sigma(\sigma(x_i)\to
x_i)=\sigma(x_i)<1$ as well as due to Lemma \ref{3new}$(g)$, we have
$\sigma(\sigma(x_i)\to x_i) = \sigma(x_i) \to \sigma(x_i)=1,$
absurd.

By Proposition \ref{pr:MV}, the restriction of $\sigma$ to $S_n$ is
a state-MV-operator on the MV-algebra $S_n.$ Because $x_i = i\cdot
x_1$ for any $i=1,\ldots,n,$ we have $\sigma(x_i)=i\cdot
\sigma(x_1)$ proving $\sigma(x_1)= x_1$ and $\sigma(x_i) = x_i.$

If $x \in A_1$ and $\sigma(x)=1,$ then trivially $\sigma(x)\in A_1.$
We assert also that $\sigma(A_1)\subseteq A_1.$ Suppose the
converse.  Let now $x \in A_1\setminus S_n$ such that
$\sigma(x)\notin A_1.$ Then $0<\sigma(x)=x_i$ for some
$i=1,\ldots,n-1.$ Then $0=\sigma(x^-) = \sigma(x)^-= x_i^-=x_{n-i}
>0,$ absurd.

Let $\sigma_A$ be the restriction of $\sigma$ onto $A_1.$ Then (i)
and (ii) are evident.

Hence, $\sigma_A(0_1)\odot \sigma_A(0_1)\le \sigma_A (0_1\odot
0_1)=\sigma_A(0_1)$ due to Proposition \ref{3new}(d). But
$\sigma_A(A_1)$ is closed under $\odot$. Therefore,
$\sigma_A(0_1)\le \sigma_A(0_1)\odot\sigma_A(0_1)$ getting
$\sigma_A(0_1)$ is idempotent.

The rest of the proof is now straightforward.
\end{proof}

\begin{lemma}\label{le:3.19} Let $A= S_n \oplus A_1,$ where $A_1=
S_{n_1}\times \cdots \times S_{n_k}.$ Then there are at least $2^k$
state-operators on $A$ such that each of them preserves $\odot$ and
$\to.$
\end{lemma}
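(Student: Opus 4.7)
The plan is to exhibit, for each subset $I \subseteq \{1, \ldots, k\}$, an idempotent BL-endomorphism $\tau_I$ of $A_1$, and then invoke the converse direction of Lemma \ref{le:3.18} to extend each $\tau_I$ to a state-operator $\sigma_I$ on $A$ that preserves both $\odot$ and $\to$. Distinct subsets will yield distinct operators, giving the required $2^k$.

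For each $I$, define $\tau_I \colon A_1 \to A_1$ coordinatewise by setting the $i$-th coordinate of $\tau_I(x_1, \ldots, x_k)$ to be $x_i$ if $i \in I$ and $1$ if $i \notin I$. On each coordinate $\tau_I$ is either the identity or the constant-$1$ map, both BL-homomorphisms, so $\tau_I$ is a BL-endomorphism of $A_1$, and $\tau_I \circ \tau_I = \tau_I$ is immediate from the formula. Hence $\tau_I$ satisfies axioms $(2)_{BL}$--$(5)_{BL}$ on $A_1$: preservation of $\to$, $\wedge$ and $\odot$ gives $(2)_{BL}$ and $(3)_{BL}$ directly, while $(4)_{BL}$ and $(5)_{BL}$ follow because $\tau_I$ fixes its image pointwise. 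The element $e_I := \tau_I(0_1)$, with $i$-th coordinate $0$ for $i \in I$ and $1$ otherwise, is idempotent in $A_1$. Thus $\tau_I$ satisfies conditions (iii)--(iv) of Lemma \ref{le:3.18}, and the mapping $\sigma_I \colon A \to A$ defined to be $\tau_I$ on $A_1$ and the identity on $S_n$ is a state-operator on $A$.

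It remains to verify that $\sigma_I$ preserves $\odot$ and $\to$ on the whole of $A$ and that the $2^k$ operators are distinct. Preservation is immediate when both arguments lie in the same summand; in the mixed cases ($x \in S_n \setminus \{1\}$, $y \in A_1$, or the symmetric situation), one applies the ordinal-sum rules: the values $x \odot y$ and $x \to y$ are determined solely by which summand each argument inhabits and whether it equals $1$, and these data are preserved under $\sigma_I$ because $\tau_I(y) \in A_1$. Distinctness is visible from $\sigma_I(0_1) = e_I$, since $I \neq J$ forces $e_I \neq e_J$. Putting everything together produces $2^k$ distinct state-morphism-operators on $A$. The only mild technical obstacle is the mixed-case check for preservation of $\odot$ and $\to$, which reduces to a short case analysis from the ordinal-sum definitions.
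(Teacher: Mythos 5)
Your proposal is correct and follows essentially the same route as the paper: the family $\{\sigma_J\}$ the paper constructs is exactly your $\{\tau_I\}$ indexed by complementary subsets, extended to $A$ by the identity on $S_n$ via the converse part of Lemma \ref{le:3.18}, with distinctness read off from the $2^k$ idempotents (the paper uses the kernels $[a_{J^c},1]$, you use the images of $0_1$ --- equivalent bookkeeping). One cosmetic caveat: the constant-$1$ map on a coordinate is a hoop-homomorphism rather than a BL-homomorphism (it does not preserve $0_1$), but this is harmless since Lemma \ref{le:3.18} never requires $\sigma_A(0_1)=0_1$, only that $\sigma_A(0_1)$ be idempotent; your explicit treatment of the mixed ordinal-sum cases is in fact more careful than the paper's.
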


\begin{proof}
By Lemma \ref{le:3.18}, every state-operator on the BL-algebra $A$
is on $S_n$ the identity. To have a state-operator on $A$, it is
enough to define it only on $A_1,$ because on the rest of $A_1$ it
is the identity, and this we will do. We have exactly $2^k$
idempotents of $A_1$ and hence $2^k$ different filters of $A_1.$

(1) If $\sigma|A_1 = \mbox{id}_{A_1},$ then $\Ker(\sigma)=\{1\}.$ If
$\sigma|A_1 =\mbox{id}_{\{1\}},$ then $\Ker(\sigma)=A_1$ and both
are state-operators on $A$ that preserve $\odot$ and $\to$.

(2)  Let $J\subseteq \{n_1,\ldots,n_k\}.$ Define $\sigma_J: A_1\to
A_1$ by $\sigma_J(x_1,\ldots,x_k)=(y_1,\ldots,y_k),$ where $y_i=n_i$
if $i\in J$ otherwise $y_i=x_i.$  Then $\sigma_J$ satisfies all
conditions (i)--(iv) of Lemma \ref{le:3.18}, so that $\sigma_J$
defines a state-operator on $A$ that preserves $\odot$ and $\to.$

In addition, every idempotent of $A_1$ is of the form $a=a_J,$ where
$a_J=(x_1,\ldots,x_k)$ with $x_i=n_i$ if $i \in J$ and $x_i=0$
otherwise, whence $a^*_J=a_{J^c}.$   Then
$\sigma_J(x)=1=(n_1,\ldots,n_k)$ iff $x=(x_1,\ldots,x_k)$ with
$x_i=n_i$ if $i\notin J,$ so that $\Ker(\sigma_J)=[a_{J^c},1].$

Hence, there is at least $2^k$ different state-operators $A$ each of
them preserves $\odot$ and $\to.$
\end{proof}

We note that the second   case in (1) in the proof of the latter
lemma is a special case of (2) when $J=\{n_1,\ldots,n_k\},$ and the
first one in (1) is also a special case of (3) when $J = \emptyset.$

Let $A= A_0\oplus A_1$ and let $a \in A_1$ be idempotent. We define
$\sigma_a$ on $A$ as follows: $\sigma_a(x)= 1$ if $a\le x \le 1,$
$\sigma_a(x) = 0_1$ if $0_1\le x\le a^*$ and $\sigma_a(x)=x$
otherwise.

\begin{lemma}\label{le:3.20} Let $A= S_n \oplus A_1,$ where $A_1=
S_{n_1}\times \cdots \times S_{n_k}.$ Let $a$ be  idempotent of
$A_1$ such that  $[a,1]\cup [0_1,a^*]=A_1,$ then $\sigma_a$ is a
state-operator that preserves $\odot$ and $\to.$
\end{lemma}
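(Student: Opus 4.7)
The plan is to verify that $\sigma_a$ preserves $\odot$ and $\to$ on all of $A$ and is idempotent; once this is done, all five state-operator axioms follow routinely. Under the covering hypothesis $[a,1]\cup[0_1,a^*]=A_1$, the image $\sigma_a(A_1)$ collapses to $\{0_1,1\}$, while on $S_n\setminus\{1\}$ (which lies strictly below $0_1$ in the ordinal sum) $\sigma_a$ is the identity by the ``otherwise'' clause, and a direct check from the definition gives $\sigma_a\circ\sigma_a=\sigma_a$.

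Before the case analysis I would record two structural facts. Since $A_1$ is a finite product of MV-chains, its idempotents form a Boolean subalgebra, so $a^*$ is also idempotent; hence both $[a,1]$ and $[0_1,a^*]$ are closed under $\odot$. Moreover $a\wedge a^*=a\odot(a\to a^*)=a\odot a^*=0_1$, which will be the crux of the hardest subcase.

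The main step is a case analysis for $\sigma_a(x\odot y)=\sigma_a(x)\odot\sigma_a(y)$ and $\sigma_a(x\to y)=\sigma_a(x)\to\sigma_a(y)$. When one of $x,y$ lies in $S_n\setminus\{1\}$, the ordinal-sum rules force $x\odot y$ and $x\to y$ into absorbing forms ($=x$, $=y$, or $=1$) that match $\sigma_a$ termwise. Within $A_1$ there are four subcases. If $x,y\in[a,1]$, closure under $\odot$ handles the product, and $y\le x\to y$ places $x\to y$ in $[a,1]$. If $x,y\in[0_1,a^*]$, closure under $\odot$ again handles the product, and $x\odot a=0_1$ forces $a\le x\to 0_1\le x\to y$. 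The mixed subcase $x\in[0_1,a^*]$, $y\in[a,1]$ uses $x\odot y\le x\le a^*$ together with $x\to y\ge y\ge a$. The hard subcase is $x\in[a,1]$, $y\in[0_1,a^*]$: here I would bound $a\odot(x\to y)\le x\odot(x\to y)=x\wedge y\le y\le a^*$, and also $a\odot(x\to y)\le a$, so that $a\odot(x\to y)\le a\wedge a^*=0_1$, yielding $x\to y\le a^*$ and hence $\sigma_a(x\to y)=0_1=1\to 0_1$. This last subcase is the main obstacle, and it is precisely where the covering hypothesis enters through $a\wedge a^*=0_1$.

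Once preservation of $\odot$ and $\to$ is established, the state-operator axioms reduce to bookkeeping. Axiom $(1)_{BL}$ is $\sigma_a(0)=0$; axiom $(2)_{BL}$ follows from Proposition~\ref{4}$(4)$ via $\sigma_a(x)\to\sigma_a(x\wedge y)=\sigma_a(x)\to(\sigma_a(x)\wedge\sigma_a(y))=\sigma_a(x)\to\sigma_a(y)$; axiom $(3)_{BL}$ expands to $\sigma_a(x)\odot(\sigma_a(x)\to(\sigma_a(x)\odot\sigma_a(y)))=\sigma_a(x)\wedge(\sigma_a(x)\odot\sigma_a(y))=\sigma_a(x)\odot\sigma_a(y)$; and axioms $(4)_{BL}$, $(5)_{BL}$ follow from $\sigma_a\circ\sigma_a=\sigma_a$ combined with the homomorphism properties. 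Hence $\sigma_a$ is a state-operator on $A$ that preserves both $\odot$ and $\to$.
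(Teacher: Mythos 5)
Your proof is correct and follows essentially the same route as the paper: a case analysis showing that $\sigma_a$ preserves $\odot$ and $\to$ (the paper's case (iii) is your hard subcase, settled there via the slightly more direct bound $x\to y\le a\to a^* = (a\odot a)\to 0_1 = a^*$), after which the five axioms follow from the homomorphism property together with $\sigma_a\circ\sigma_a=\sigma_a$. One correction to your commentary: the identity $a\wedge a^* = a\odot a^* = 0_1$ follows from idempotence of $a$ alone and does not use the covering hypothesis; where $[a,1]\cup[0_1,a^*]=A_1$ actually enters is in making your four subcases exhaustive, i.e.\ in forcing $\sigma_a(A_1)\subseteq\{0_1,1\}$ --- without it some $x\in A_1$ is fixed by $\sigma_a$ and the conclusion fails, as Remark~\ref{re:3.21} shows.
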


\begin{proof}
Let $a$ be  idempotent of $A_1$  and $0_1:=(0,\ldots,0)<a<1.$ We set
$x^*:= x\to 0_1$ and  we define $\sigma_a$ on $A$ as follows:
$\sigma_a(x)= 1$ if $a\le x \le 1,$ $\sigma_a(x) = 0_1$ if $0_1\le
x\le a^*$ and $\sigma_a(x)=x$ otherwise.

We claim that  $\sigma_a$ is a state-operator that preserves $\odot$
and $\to$ whenever  $[a,1]\cup [0_1,a^*]=A_1.$  It is enough to
verify the following conditions.

(i) Let $x,y \in [a,1].$ Then $\sigma_a(x\odot y)=1=\sigma_a(x)\odot
\sigma_a(y)$ and $\sigma_a(x\to y)=1=\sigma_a(x)\to \sigma_a(y).$

(ii) Let $0_1\le x,y\le a^*.$ Then $x\to y \ge x\to 0_1 = x^*\ge a$
and thus $\sigma_a(x\to y)=1,$ and $\sigma_a(x)\to \sigma_a(y)
=0_1\to 0_1 =1.$ $\sigma_a(x\odot y)=0_1= \sigma_a(x)\odot
\sigma_a(y).$

(iii) Let $a\le x\le 1$ and $0_1\le y \le a^*.$ Then by Proposition
\ref{4}(6), we have $x\to y \le a\to a^* = a\to (a\to 0_1) = a^2 \to
0_1 = a\to 0_1 = a^*$ that gives $\sigma_a(x\to y) = 0_1$ and
$\sigma_a(x)\to \sigma_a(y) = 1\to 0_1 = 0_1.$

On the other hand, $y\to x \ge a^*\to a = a^* \to (a^* \to 0_1) =
a^* \odot a^* \to 0_1 = a.$ Hence, $\sigma_a(x\to y) = 1$ and
$\sigma_a(x)\to \sigma_a(y) = 0_1 \to 1 =1.$

$\sigma_a(x\odot y) \le \sigma_a(y) =0_1$ and $\sigma_a(x)\odot
\sigma_a(y)=0_1.$
\end{proof}

\begin{rem}\label{re:3.21}
The condition  $[a,1]\cup [0_1,a^*]=A_1$ is satisfied e.g.: if (i)
$a=1$ and $a^*=0_1$ (then $\sigma_a= \sigma_{\{1,\ldots,k\}}$);

\noindent (ii) $A_1 = S_n\times S_1\times \cdots \times S_1,$
$a=(0,\ldots,0,1)$ and $a^*=(n,1,\ldots,1,0).$    Then $\sigma_a \ne
\sigma_J$ for any $J\subseteq\{1,\ldots,k\};$

\noindent (iii) $A_1=\{0,1,2\}\times \{0,1,2\} \times \{0,1\}$ and
$a =(0,0,1),$ $a^*=(2,2,0).$ Then $\sigma_a\ne \sigma_J.$

$\sigma_a$ is not necessarily a state-operator on $A$ if the
condition $[a,1]\cup [0_1,a^*]=A_1$ is not satisfied. Indeed, let
$A_1=\{0,1,2,3,4\}\times \{0,1,2,3,4\}$ and $a=(0,4), a^*=(4,0).$
Then $x=(3,1) \in A_1\setminus([a,1]\cup [(0,0),a^*])$ and $x\odot x
= (3,1)\odot(3,1) = (2,0) \in [(0,0),a^*]$ but $\sigma(x\odot
x)=\sigma(2,0) = (0,0)\ne (2,0)=\sigma(x)\odot \sigma(x)$ that
contradicts $(d)$ of Proposition \ref{3new}.
\end{rem}

In order to prove that every state-operator on a linearly ordered
BL-algebra preserves $\odot$ and $\to,$ we introduce hoops and
Wajsberg hoops.  A {\it hoop} is an algebra $(A,\to,\odot,1)$ of
type $\langle 2,2,1\rangle$ such that for all $x,y,z \in A$ we have
(i) $x\to x=1,$ (ii) $x\odot (x\to y) = y\odot (y\to x),$ and (iii)
$x \to (y\to z)= (x\odot y)\to z.$  Then $\le$ defined by $x \le y$
iff $x\to y=1$ is a partial order and $x\odot(x\to y)= x\wedge y.$ A
{\it Wajsberg hoop} is a hoop $A$ such that $(x\to y)\to y= (y\to
x)\to x,$ $x,y \in A.$  If a Wajsberg hoop has a least element $0,$
then $(A,\odot,\to,0,1)$ is term equivalent to an MV-algebra. For
example, if $G$ is an $\ell$-group written additively with the zero
element $0=0_G$, then the negative cone $G^-=\{g \in G\mid g \le
0\}$ is an unbounded Wajsberg hoop with the greatest element
$1=0_G,$ $g\rightarrow h:=h-(g\vee h)= (h-g)\wedge 0_G,$ and $g\odot
h = g+h,$ $g,h \in G^-.$ If $u$ is a strong unit for $G$, that is,
given $g\in G,$ there is an integer $n\ge 1$ such that $g \le nu,$
we endow the interval $[-u,0_G]:=\{g \in G^-\mid -u \le g \le 0_G\}$
with $x\rightarrow y:=(y-x)\wedge 0_G,$ and $x\odot y = (x+y)\vee
(-u),$ then $[-u,0_G]$ is a bounded Wajsberg hoop with the least
element $0=-u$ and the greatest element $1=0_G.$

Conversely, if $A$ is a Wajsberg hoop, then there is an $\ell$-group
$G$ such that $A$ can be embedded into $G^-$ or onto $[-u,0_G],$ see
e.g. \cite[Prop 3.7]{Dvu}.

If we have a system of hoops, $\{A_i\mid i \in I\},$ where $I$ is a
linearly ordered set, and $A_i \cap A_j=\{1\}$ for $i,j \in I,$
$i\le j$, then we can define an ordinal sum, $A=\bigoplus_{\in
I}A_i$ in a similar manner as for BL-algebras. In such a case, $A$
is always a hoop.

An important result of \cite{AgMo} says that any linearly ordered
BL-algebra $A$ is an ordinal sum of linearly ordered Wajsberg hoops,
$A=\bigoplus_{i\in I}A_i$, where $I$ is a linearly ordered set with
the least element $0$ and $A_0$ is a bounded Wajsberg hoop.

\begin{prop}\label{pr:3.23}  Every state-operator $\sigma$ on a linearly
ordered BL-algebra $A$ preserves both $\odot$ and $\to,$ and it is
an endomorphism such that $\sigma^2 = \sigma.$
\end{prop}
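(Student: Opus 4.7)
The plan is to chain together three facts already established in the excerpt, rather than to invoke the Wajsberg-hoop ordinal-sum machinery just set up. The starting observation is that linearity of $A$ makes every pair of elements comparable, which is exactly the hypothesis that converts the inequality $\sigma(x\to y)\le \sigma(x)\to \sigma(y)$ of Lemma \ref{3new}(g) into an equality. So for every $x,y\in A$ one has $\sigma(x\to y)=\sigma(x)\to\sigma(y)$; this is already recorded as Lemma \ref{linear}(1), and it disposes of the first half of the conclusion immediately.

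Once preservation of $\to$ has been upgraded from "comparable pairs only" to "all pairs", Lemma \ref{arrow}(3) applies verbatim: it says that any state-operator that preserves $\to$ automatically preserves $\odot$ and is therefore a BL-endomorphism. Applying it here gives the preservation of $\odot$ and the assertion that $\sigma$ is an endomorphism of $A$. The remaining assertion $\sigma^2=\sigma$ is just Lemma \ref{3new}(j), which holds for every state-operator on every BL-algebra without any linearity hypothesis.

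Since every step reduces to citing an earlier lemma, there is no genuine obstacle; the only observation requiring any thought is the trivial one that in a linear order the conditional clause "$x$ and $y$ comparable" in Lemma \ref{3new}(g) is always active. Note in particular that the Aglianò–Montagna decomposition of a linear BL-algebra into an ordinal sum of Wajsberg hoops, and the hoop/$\ell$-group preliminaries introduced just before the proposition, are not needed for this argument — they serve other purposes later — so the proof is essentially a two-line assembly of Lemma \ref{3new}(g), Lemma \ref{arrow}(3), and Lemma \ref{3new}(j).
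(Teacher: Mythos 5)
Your argument is correct, and it is genuinely different from --- and much shorter than --- the proof in the paper. You chain Lemma \ref{3new}(g) (equality $\sigma(x\to y)=\sigma(x)\to\sigma(y)$ for comparable $x,y$, hence for all pairs in a chain, which is exactly Lemma \ref{linear}(1)), then Lemma \ref{arrow}(3) (preservation of $\to$ for all pairs forces preservation of $\odot$ and makes $\sigma$ an endomorphism), then Lemma \ref{3new}(j) for $\sigma^2=\sigma$; each of these is proved in the paper from the axioms alone, with no hidden linearity or strongness hypotheses and no circularity, so the assembly is sound. The paper instead invokes the Aglian\`o--Montagna decomposition $A=\bigoplus_{i\in I}A_i$ into linearly ordered Wajsberg hoops, proves $\sigma(A_i)\subseteq A_i$, and verifies preservation of $\odot$ summand by summand (the bounded MV case, the case $\sigma(0_i)$ idempotent, the unbounded $\ell$-group-cone case). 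What the longer route buys is the structural information itself: the invariance $\sigma(A_i)\subseteq A_i$ and the idempotency analysis of $\sigma(0_i)$ are exactly what is recycled in Remark \ref{re:3.23} and in the construction of state-operators on ordinal sums such as Lemma \ref{le:3.18}, so the decomposition argument is not wasted even though it is not needed for the proposition as stated. What your route buys is economy and generality of method: it shows the proposition is a formal consequence of the elementary lemmas already on record, without any appeal to the structure theory of linear BL-algebras.
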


\begin{proof}  Suppose that $\sigma$ is a state-operator on a
linearly ordered BL-algebra $A.$  Due to the Aglian\`o-Montagna
theorem, $A= \bigoplus_{i\in I}A_i.$

It is clear that if $x \in A_i$ is such that $\sigma(x)=1,$ then
trivially $\sigma(x)\in A_i.$ We assert that $\sigma(A_i)\subseteq
A_i$ for any $i \in I.$    Suppose the converse, then there is an
element $x\in A_i\setminus\{1\}$ such that $\sigma(x)\notin A_i,$
whence $\sigma(x)<1,$ and let $\sigma(x) \in A_j$ for $i\ne j.$
There are two cases (i) $i<j$, hence $ x<\sigma(x)$ and
$\sigma(\sigma(x)\to x)= \sigma(x)<1$ as well as
$\sigma(\sigma(x)\to x)=\sigma(x)\to \sigma(x)=1$ taking into
account that in the linear case $\sigma$ preserves $\to.$

(ii) $j<i$ and $\sigma(x)<x$, so that $\sigma(x\to
\sigma(x))=\sigma(x)$ as well as $\sigma(x\to
\sigma(x))=\sigma(x)\to \sigma(x)=1.$

In both case we have  a contradiction, therefore,
$\sigma(A_i)\subseteq A_i$ for any $i\in I.$

Let now $x\in A_i$ and $y \in A_j$ and $i<j.$  If $x =1$ or $y=1,$
then clearly  $\sigma(x\odot y)=\sigma(x)\odot \sigma(y).$  Suppose
$x<1$ and $y<1.$  Then $\sigma(x)\in A_i$ and $\sigma(y)\in A_j$ and
hence $\sigma(x\odot y) = \sigma(x)= \sigma(x)\odot \sigma(y).$

Assume now $x,y \in A_i.$ Then $\sigma(x),\sigma(y) \in A_i.$  If
$x=1$ or $y=1,$ then $\sigma(x\odot y)=\sigma(x)\odot \sigma(y).$
Thus let $x<1$ and $y<1.$

If $i=0$, then $\sigma$ on $A_0$ is a state-MV-operator and
therefore, by Proposition \ref{pr:MV}, $\sigma$ is strong on $A_0$
and because $A_0$ is linear, by Lemma \ref{linear}, $\sigma$
preserves $\odot$ on $A_0.$

Let now $i>0$ and let $A_i$ be bounded, i.e., there exists a least
element $0_i$ in $A_i.$ Then $\sigma(0_i)\le \sigma(x)$ for any $x
\in A_i.$ Due to property $(4)_{BL}$ of Definition \ref{2'}, we have
$\sigma(\sigma(0_i)\odot \sigma(0_i)) =
\sigma(0_i)\odot\sigma(0_i)$, therefore,
$\sigma(0_i)\odot\sigma(0_i) \le \sigma(0_i) \le
\sigma(0_i)\odot\sigma(0_i)$ i.e., $\sigma(0_i)$ is idempotent. But
in the linear $A_i$ there are only two idempotents, $1$ and $0_i.$
Hence either $\sigma(0_i)=1$ or $\sigma(0_i)=0_i.$  In the first
case, $\sigma(x)=1$ for any $x\in A_i$ and the second case, $\sigma$
on $A_i$ is a state-MV-operator, so in both cases, $\sigma$
preserves $\odot$ on $A_i.$

Finally, assume $A_i$ is unbounded. Therefore, $A_i \cong G^-$ for
some linearly ordered $\ell$-group $G,$  and for all $x,y \in A_i,$
we have $x\to (x\odot y) = x \to (x+y) = (x+y)-x = y.$ Hence for
$(3)_{BL},$ we have $\sigma(x\odot y) = \sigma(x)\odot \sigma(x \to
x\odot y)= \sigma(x)\odot \sigma(y).$

From all possible cases we conclude that $\sigma$ preserves $\odot$
on $A$ and by Lemma \ref{linear}, $\sigma$ preserves also $\to,$
i.e. $\sigma$ is an endomorphism such that $\sigma^2 = \sigma.$
\end{proof}

\begin{cor}\label{co:3.24} There is a one-to-one correspondence
between state-operators on a linearly ordered BL-algebra $A$ and
endomorphisms $\sigma: A \to A$ such that $\sigma^2 = \sigma.$
\end{cor}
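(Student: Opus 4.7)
The plan is essentially to glue together Proposition \ref{pr:3.23} with an easy converse. Proposition \ref{pr:3.23} already shows that every state-operator on a linearly ordered BL-algebra $A$ is a BL-endomorphism $\sigma : A \to A$ with $\sigma^2 = \sigma$. So the map ``state-operator $\sigma \mapsto$ idempotent endomorphism $\sigma$'' is well-defined. Since the reverse map will send $\sigma$ to itself, mutual inverseness is automatic once I verify that every idempotent BL-endomorphism of $A$ satisfies the axioms of Definition \ref{2'}.

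For the converse direction, let $\sigma : A \to A$ be any BL-endomorphism (so preserving $0, 1, \wedge, \vee, \odot, \to$) with $\sigma \circ \sigma = \sigma$; note that linearity of $A$ is not actually needed here. I check $(1)_{BL}$--$(5)_{BL}$ in turn. Axiom $(1)_{BL}$ is immediate. For $(2)_{BL}$, the identity $a \to (a \wedge b) = a \to b$ from Proposition \ref{4}(4) combined with preservation of $\wedge$ and $\to$ by $\sigma$ gives $\sigma(x \to y) = \sigma(x) \to \sigma(y) = \sigma(x) \to (\sigma(x) \wedge \sigma(y)) = \sigma(x) \to \sigma(x \wedge y)$. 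For $(3)_{BL}$, compute
\[
\sigma(x) \odot \sigma(x \to x\odot y) = \sigma(x) \odot (\sigma(x) \to \sigma(x) \odot \sigma(y))
\]
and recognize the right-hand side, via the defining BL-identity $a \wedge b = a \odot (a \to b)$ of Definition \ref{def:BL}(2), as $\sigma(x) \wedge (\sigma(x) \odot \sigma(y)) = \sigma(x) \odot \sigma(y) = \sigma(x \odot y)$. Axioms $(4)_{BL}$ and $(5)_{BL}$ drop out of $\sigma^2 = \sigma$ together with preservation of $\odot$ and $\to$: for instance
\[
\sigma(\sigma(x) \odot \sigma(y)) = \sigma(\sigma(x \odot y)) = \sigma(x \odot y) = \sigma(x) \odot \sigma(y),
\]
and likewise $\sigma(\sigma(x) \to \sigma(y)) = \sigma(x) \to \sigma(y)$.

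Once these verifications are in place, bijectivity is automatic: both directions are the identity on the underlying function $\sigma$, so the two assignments are mutually inverse. There is no real obstacle in the argument; the substantive work is entirely contained in Proposition \ref{pr:3.23}, and the corollary is really just the observation that the nontrivial implication there has a trivial converse.
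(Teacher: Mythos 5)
Your proposal is correct and matches the paper's (implicit) argument: the forward direction is exactly Proposition \ref{pr:3.23}, and the converse is the routine verification, which the paper omits, that any idempotent BL-endomorphism satisfies $(1)_{BL}$--$(5)_{BL}$. Your axiom checks are all sound (in $(3)_{BL}$ the final step uses $\sigma(x)\odot\sigma(y)\le\sigma(x)$, which is immediate from monotonicity of $\odot$), and your observation that linearity is not needed for the converse is accurate.
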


\begin{rem}\label{re:3.23} In the same way as in the proof of
Proposition \ref{pr:3.23}, we can show that if $\sigma$ is a
state-operator on a BL-algebra $A$ that is an ordinal sum,
$A=\bigoplus_{i\in I} A_i,$ of hoops, then $\sigma(A_i)\subseteq
A_i$ for any $i\in I.$

In addition, suppose a BL-algebra $A = \bigoplus_{i\in I}A_i,$ where
each $A_i$ is a hoop, and $A_0$ is a linear BL-algebra. Let
$\sigma_i:A_i \to A_i$ be a mapping such that conditions  $(1)_{BL}
- (5)_{BL}$ of Definition \ref{2'} are satisfied and if $0_i$ is the
least element of $A_i,$ then $\sigma(0_i)$ is idempotent. Then the
mapping $\sigma:A \to A$ defined by $\sigma(x) = \sigma_i(x)$ if $x
\in A_i,$ is a state-operator on $A.$
\end{rem}

We recall that a {\it t-norm} is a function $t:[0,1]\times [0,1]\to
[0,1]$ such that (i) $t$ is commutative, associative, (ii)
$t(x,1)=x,$ $x \in [0,1],$ and (iii) $t$ is nondecreasing in both
components. If $t$ is continuous, we define $x\odot_t y=t(x,y)$ and
$x\to_t y = \sup\{z\in [0,1]\mid t(z,x)\le y\}$ for $x,y \in [0,1],$
then $\mathbb I_t:=([0,1],\min,\max,\odot_t,\to_t,0,1)$ is a
BL-algebra. Moreover, according to \cite[Thm 5.2]{CEGT}, the variety
of all BL-algebras is generated by all $\mathbb I_t$ with a
continuous t-norm $t.$

There are three important continuous t-norms on $[0,1]$ (i) \L
ukasiewicz: $\mbox{\L}(x,y)=\max\{x+y-1,0\}$ with $x\to_{\mbox
{\tiny \L}} y = \min\{x+y-1,1\},$ (ii) G\"odel: $G(x,y)=\min\{x,y\}$ and
$x\to_Gy = 1$ if $x\le y$ otherwise $x\to_Gy= y,$ and (iii) product:
$P(x,y)=xy$ and $x\to_P y= 1$ if $x\le y$ and $x\to_Py = y/ x$
otherwise. The basic result on continuous t-norms \cite{MoSh} says
that a $t$-norm is continuous iff it is isomorphic to an ordinal sum
where summands are the \L ukasiewicz, G\"odel or product t-norm.

In what follows, we describe all state-operators with respect to
these basic continuous t-norms.

\begin{lemma}\label{le:3.22} $(1)$ If $\sigma$ is a state-operator on
the BL-algebra $\mathbb I_{\mbox{\tiny \L}},$ then $\sigma(x)=x.$

$(2)$ Let $a\in [0,1],$ we set $\sigma_a(x):= x$ if $x \le a$ and
$\sigma_a(x)=1$ otherwise and for $a \in (0,1]$ let $\sigma^a(x)= x$
if $x <a$ and $\sigma^a(x)=1$ otherwise. Then $\sigma_a$ and
$\sigma^a$ are state-morphism-operators on $\mathbb I_G$ preserving
$\to,$ and if $\sigma$ is any state-operator on $\mathbb I_G,$ then
$\sigma = \sigma_a$ or $\sigma= \sigma^a$ for some $a \in [0,1].$

$(3)$ If $\sigma$ is a state-operator on $\mathbb I_P,$ then
$\sigma(x)=x$ for any $x\in [0,1]$ or $\sigma(x) =1 $ for any $x>0.$
\end{lemma}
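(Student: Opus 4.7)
All three parts begin by invoking Proposition \ref{pr:3.23}: each of $\mathbb I_{\mbox{\tiny \L}}$, $\mathbb I_G$, $\mathbb I_P$ is linearly ordered, so every state-operator $\sigma$ is an endomorphism with $\sigma^2=\sigma$; in particular $\sigma$ is monotone, preserves $\odot$ and $\to$, and equals the identity on $F:=\mathrm{Fix}(\sigma)=\sigma([0,1])$ by Lemma \ref{3new}(l). For (1), I would first pin down $\sigma$ on $\mathbb Q\cap[0,1]$. From $n\cdot(1/n)=1$ we get $n\cdot\sigma(1/n)=1$, hence $\sigma(1/n)\ge 1/n$; and since $((n-1)/n)^n=0$ in $\mathbb I_{\mbox{\tiny \L}}$, we have $\sigma((n-1)/n)^n=0$, whence $\sigma((n-1)/n)\le(n-1)/n$, which combined with $\sigma((n-1)/n)=(n-1)\cdot\sigma(1/n)$ forces $\sigma(1/n)\le 1/n$. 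Thus $\sigma(1/n)=1/n$, and preservation of $\oplus$ (which follows from preservation of $\odot$ and $\to$ through $x\oplus y=(x^-\odot y^-)^-$) extends this to $\sigma(k/n)=k/n$ for every rational $k/n\in[0,1]$. Monotonicity together with density of $\mathbb Q$ in $[0,1]$ then yields $\sigma=\mathrm{id}$.

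For (2), a short case analysis on the relative positions of $x$, $y$, $a$ shows $\sigma_a$ and $\sigma^a$ preserve $\min$ and $\to_G$, so by Lemma \ref{arrow}(3) they are state-morphism-operators. For the converse, the decisive observation uses the G\"odel-specific identity $x\to_G y=y$ for $x>y$: whenever $\sigma(x)=\sigma(y)=z$ with $x>y$, preservation of $\to_G$ gives $z=\sigma(y)=\sigma(x\to_G y)=\sigma(x)\to_G\sigma(y)=z\to_G z=1$. Hence the only possibly non-singleton fibre of $\sigma$ is $\sigma^{-1}(1)$, so $\sigma$ agrees with the identity on $F$ and sends $[0,1]\setminus F$ to $1$. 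The set $F\setminus\{1\}$ is then a downward-closed subset of $[0,1)$ (complement of the upward-closed $\sigma^{-1}(1)$) containing $0$, so it equals either $[0,a]$ for some $a\in[0,1)$, giving $\sigma=\sigma_a$, or $[0,a)$ for some $a\in(0,1]$, giving $\sigma=\sigma^a$.

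For (3), one verifies directly that $\mathrm{id}_{[0,1]}$ and the map $\tau$ with $\tau(0)=0$, $\tau(x)=1$ for $x>0$ are state-operators on $\mathbb I_P$. Conversely, let $\sigma$ be a state-operator with $F=\mathrm{Fix}(\sigma)$. If $F$ contains some $a\in(0,1)$, then closure of $F$ under $\odot_P$ gives $a^n\in F$ for every $n\ge 1$, and $\sigma(a^{1/n})^n=\sigma(a)=a>0$ together with the uniqueness of positive $n$-th roots on $[0,1]$ forces $\sigma(a^{1/n})=a^{1/n}$; hence $\{a^r:r\in\mathbb Q_{>0}\}\subseteq F$. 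Since this set is dense in $(0,1]$ and $\sigma$ is monotone, $\sigma=\mathrm{id}$. Otherwise $F\subseteq\{0,1\}$, in which case preservation of $\to_P$ rules out any $x>0$ with $\sigma(x)=0$: such an $x$ would give $1=0\to_P 0=\sigma(x)\to_P\sigma(0)=\sigma(x\to_P 0)=\sigma(0)=0$, absurd. Thus $\sigma(x)=1$ for every $x>0$, i.e.\ $\sigma=\tau$.

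The main subtlety is the density argument in (3), which succeeds only because $\sigma$ is monotone even though it is not a priori continuous, together with the G\"odel-specific fibre collapse in (2) that pins down the unique ``collapsing value'' to be $1$; the remaining steps are computations guided by Proposition \ref{pr:3.23}.
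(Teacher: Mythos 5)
Your proof is correct, and at the top level it follows the same route as the paper: reduce everything to Proposition \ref{pr:3.23} (on a linear BL-algebra every state-operator is an idempotent endomorphism) and then classify. The local arguments differ in a few instructive ways. In (1) the paper works only with the state-MV-operator identity $\sigma(n\cdot 1/n)=n\cdot\sigma(1/n)$, where the partial-addition convention already forces $\sigma(1/n)\le 1/n$; your two-sided squeeze using $((n-1)/n)^n=0$ reaches the same conclusion more explicitly. In (2) the paper derives a contradiction from axiom $(2)_{BL}$ applied on the interval $[\sigma(a),a]$ (resp.\ $[a,\sigma(a)]$), whereas your fibre-collapse observation --- $\sigma(x)=\sigma(y)$ with $x>y$ forces the common value to be $1$, by applying $\sigma$ to $x\to_G y=y$ --- is cleaner and directly exhibits $\mathrm{Fix}(\sigma)\setminus\{1\}$ as a downward-closed set, which is exactly the $\sigma_a$/$\sigma^a$ dichotomy. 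In (3) the paper splits on $\Ker(\sigma)=\{1\}$ (where Lemma \ref{3new}(r) gives the identity at once) versus $\sigma(x_0)=1$ for some $x_0<1$ (where $x_0^n\le x$ forces $\sigma(x)=1$ for all $x>0$); your split on whether $\mathrm{Fix}(\sigma)$ meets $(0,1)$, with the density of $\{a^{r}: r\in\QQ_{>0}\}$ and monotonicity, is equivalent but does more work in the identity case. One citation nit: Lemma \ref{arrow}(3) presupposes that $\sigma$ is already a state-operator, so it cannot by itself certify that $\sigma_a$ and $\sigma^a$ are state-morphism-operators; what you actually need --- and what is immediate to verify --- is that any idempotent endomorphism satisfies $(1)_{BL}$--$(6)_{BL}$, cf.\ Corollary \ref{co:3.24}.
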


\begin{proof} (1) If $\sigma$ is a state-operator, then due to
Proposition \ref{pr:MV}, $\sigma$ is a state-MV-operator, so that
$\sigma(n\cdot 1/n)= n\cdot \sigma(1/n)$ so that $\sigma(n/m) = n/m$
and hence $\sigma(x) = x$ for any $x \in [0,1].$

(2) It is straightforward to verify that $\sigma_a$  and $\sigma^a$
are  state-operators on $\mathbb I_G$ that preserves $\odot$ and
$\to.$

Let now $\sigma$ be a state-operator on $\mathbb I_G$ and let
$0<a<1.$ We claim that then $\sigma(a)= 1$ or $\sigma(a)=a.$ (i)
Assume $\sigma(a)<a.$ Then, for any $ z\in [\sigma(a), a],$ we have
$\sigma(z)= \sigma(a)$. Then $\sigma(a\to \sigma(a)) = \sigma(a)$
and $\sigma(a)\to \sigma(a\wedge \sigma(a)) = \sigma(a)\to \sigma(a)
=1,$ a contradiction. (ii) Suppose now $a< \sigma(a)<1,$ then again
$z \in [a,\sigma(a)]$ implies $\sigma(z)=\sigma(a)$ and  we obtain
the same contradiction as in (i). Therefore, if $x\le a < y,$ then
$\sigma(x)= x$ and $\sigma(y)=1.$ If $a_0 = \sup\{a<1\mid
\sigma(a)=a\},$ then $\sigma = \sigma_{a_0}$ or
$\sigma=\sigma^{a_0}.$

(3)  Let $\sigma$ be a state-operator on $\mathbb I_P.$ Due to
Proposition \ref{pr:3.23}, $\sigma$ is an endomorphism. If
$\Ker(\sigma)=\{1\},$ by Lemma \ref{3new}, $\sigma$ is the identity
on $[0,1].$

It is easy to verify that the operator $\sigma_0$ such that
$\sigma_0(0)= 0$ and $\sigma_0(x)=1$ for $x>0$ is a state-operator
on $\mathbb I_P.$  Assume now that for some $x_0<1$ we have
$\sigma(x_0)=1.$ Let $x$ be an arbitrary element such that
$0<x<x_0.$  Then there is an integer $n\ge 1$ such that $x_0^n \le
x.$ Then $\sigma(x_0^n) \le \sigma(x),$ but $\sigma(x_0^n) =
\sigma(x_0)^n = 1 \le \sigma(x)$ proving that $\sigma(x)= 1$ for any
$x>0.$ Hence, $\sigma = \sigma_0.$
\end{proof}

\begin{prop}\label{pr:Go}  Let a BL-algebra $A$ belong to the variety
of G\"odel BL-algebras, i.e., it satisfies the identity $x= x^2.$
Then every state-operator on $A$ is an endomorphism.
\end{prop}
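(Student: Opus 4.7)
The plan is to reduce the statement to showing that $\sigma$ preserves $\wedge$, and then invoke Lemma~\ref{arrow} to upgrade this to a full endomorphism property. The key observation is that in a G\"odel BL-algebra the monoid operation collapses to the lattice meet, so all the ``BL-type'' axioms of a state-operator become much more tractable.

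First I would verify that $x^2 = x$ implies $\odot = \wedge$ on $A$. The inequality $x\odot y \le x\wedge y$ holds in any BL-algebra, and in the G\"odel case $x\wedge y = (x\wedge y)\odot(x\wedge y) \le x\odot y$ by idempotency. Since $\sigma(A)$ is a BL-subalgebra (Lemma \ref{3new}(k)), it is also G\"odel and satisfies $\odot = \wedge$ internally.

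Next, I would prove that $\sigma$ preserves $\wedge$. One inequality, $\sigma(x\wedge y)\le \sigma(x)\wedge\sigma(y)$, is immediate from monotonicity (Lemma \ref{3new}(c)). For the reverse, I would use the trivial fact that $x\odot y \le y$ forces $y\le x\to y$, hence $\sigma(y)\le \sigma(x\to y)$. Combined with Lemma \ref{3new}(f) and $\odot=\wedge$ inside $\sigma(A)$, this yields
\[
\sigma(x)\wedge \sigma(y)\;\le\; \sigma(x)\wedge \sigma(x\to y)\;=\;\sigma(x)\odot\sigma(x\to y)\;=\;\sigma(x\wedge y).
\]

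Finally, by Lemma \ref{arrow}(1) applied pointwise, preservation of $\wedge$ on all pairs is equivalent to preservation of $\to$ on all pairs; then Lemma \ref{arrow}(3) (together with Lemma \ref{arrow}(2)) gives preservation of $\odot$ and $\vee$ as well, so that $\sigma$ is an endomorphism. There is no serious obstacle here: once one sees that $\odot$ collapses to $\wedge$, the inequality $\sigma(y)\le\sigma(x\to y)$ does all the real work, and the remainder is an appeal to Lemma \ref{arrow}.
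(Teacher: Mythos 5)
Your proof is correct and follows essentially the same route as the paper: both reduce the claim to showing that $\sigma$ preserves $\wedge$ (using the collapse $\odot=\wedge$ forced by idempotency) and then invoke Lemma~\ref{arrow} to upgrade this to preservation of $\to$, $\odot$ and $\vee$. The only cosmetic difference is that you obtain the inequality $\sigma(x)\wedge\sigma(y)\le\sigma(x\wedge y)$ via $\sigma(y)\le\sigma(x\to y)$ and Lemma~\ref{3new}$(f)$, whereas the paper gets it directly from Lemma~\ref{3new}$(d)$ applied to $\sigma(x\odot y)$.
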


\begin{proof}
Let $x,y \in A$. Since they are idempotent, so are $\sigma(x)$ and
$\sigma(y)$, and we have $\sigma(x\wedge y) = \sigma(x\odot y) \ge
\sigma(x) \odot \sigma(y) = \sigma(x)\wedge \sigma(y)$ when we have
used Lemma \ref{3new}$(d).$  On the other hand, $\sigma(x\wedge y)
\le \sigma(x)\wedge \sigma(y) = \sigma(x)\odot \sigma(y).$ Hence,
$\sigma(x\wedge y)=\sigma(x\odot y) = \sigma(x)\odot \sigma(y) =
\sigma(x)\wedge \sigma(y).$  In view of (1) of Lemma \ref{arrow},
$\sigma$ preserves also $\to,$ so that $\sigma$ is an endomorphism.
\end{proof}

\begin{ex}\label{ex:G1}
 Let $A$ be a finite linear G\"odel BL-algebra. For $a\in A,$ we
put $\sigma_a(x):= x$ if $x \le a$ and $\sigma_a(x)=1$ otherwise and
for $a \in  A\setminus \{0\}$ let $\sigma^a(x)= x$ if $x <a$ and
$\sigma^a(x)=1$ otherwise. Then $\sigma_a$ and $\sigma^a$ are
state-morphism-operators on $A$ preserving $\to,$ and if $\sigma$ is
any state-operator on $A,$ then $\sigma = \sigma_a$ or $\sigma=
\sigma^a$ for some $a \in A.$
\end{ex}

\begin{proof} The proof follows the same ideas as that of (2) of
Lemma \ref{le:3.22}.
\end{proof}

\begin{ex}\label{ex:G2}  If $A$ is an arbitrary linear G\"odel
BL-algebra and for each $a\in A$ we define $\sigma_a$ and $\sigma^a$
in same manner as in Example \ref{ex:G1}, then each of them is an
endomorphism.  But not every state-operator on infinite $A$ is of
such a form.

For example, let $A_Q$ be the set of all rational numbers
of the real interval $[0,1],$ let $a$ be an irrational number from
$[0,1],$  and we define $\sigma_a$ and $\sigma^a,$ then
$\sigma_a=\sigma^a$ but $a \notin A_Q.$  On the other hand, every state-operator $\sigma $ on $A_Q$ is the restriction of some state-operator on $\mathbb I_G$ to $A_Q.$ Indeed, let $a_0 = \sup\{a<1\mid \sigma(a)=a\}.$ If $a_0$ is rational, then $\sigma = \sigma_{a_0}$ or $\sigma = \sigma^{a_0}.$ If $a_0$ is irrational, then $\sigma= \sigma_{a_0}$ and so every $\sigma$ is the restriction of a state-operator on $\mathbb I_G$ to $A_Q.$
\end{ex}

We recall that the variety of MV-algebras, $\mathcal {MV},$ in the
variety of BL-algebras is characterized by the identity $x^{--}=x,$
 the variety of product BL-algebras, $\mathcal P,$ is
characterized by identities $x\wedge x^- =0$ and $z^{--} \to
((x\odot x \to y\odot z)\to (x\to y))=1,$ and the variety of G\"odel
BL-algebras, $\mathcal G,$ is characterized by the identity $x^2=x.$
Due to \cite[Thm 6]{DEGM} or \cite[Lem3]{CEGT}, the variety
$\mathcal {MV}\vee {\mathcal P}$ is characterized by the identity
$x\to (x\odot y) = x^-\vee y.$ Therefore, every state-operator on $A
\in \mathcal {MV}\vee {\mathcal P}$ is strong.

\begin{prop}\label{pr:4.8} If a BL-algebra $A$ is locally finite,
then the identity is a unique  state-operator on $A.$
\end{prop}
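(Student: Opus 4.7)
The plan is to reduce everything to the faithfulness of $\sigma$ and then invoke Lemma \ref{3new}(r). By Lemma \ref{100}, if $A$ is locally finite then $A$ is simple and, in particular, linearly ordered, so the machinery available for the linear case applies.

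First I would analyse $\Ker(\sigma) = \{x \in A : \sigma(x) = 1\}$, which is a filter of $A$ by the remark following Definition \ref{2'}. Since $A$ is simple, its only filters are $\{1\}$ and $A$. Assuming $A$ is nontrivial, we have $0 \ne 1$ and, by $(1)_{BL}$ together with Lemma \ref{3new}(a), $\sigma(0) = 0 \ne 1$, so $0 \notin \Ker(\sigma)$. This forces $\Ker(\sigma) = \{1\}$, i.e.\ $\sigma$ is faithful. (The trivial one-element algebra is a degenerate case in which the identity is the only possible map at all.)

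Now, since $A$ is linear (Lemma \ref{100}) and $\sigma$ is faithful, Lemma \ref{3new}(r) yields $\sigma(x) = x$ for every $x \in A$, so $\sigma = \mathrm{id}_A$. Conversely $\mathrm{id}_A$ is always a state-operator by Example \ref{ex:1}, giving uniqueness.

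There is no real obstacle here: the argument is just the concatenation of Lemma \ref{100} (to get simplicity and linearity), the fact that $\Ker(\sigma)$ is a filter (to collapse the two possibilities via $\sigma(0)=0$), and Lemma \ref{3new}(r) (to upgrade faithfulness to the identity in the linear setting). The only minor care needed is to handle the nontriviality of $A$ so that $0\ne 1$ when excluding $\Ker(\sigma)=A$.
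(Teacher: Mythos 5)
Your proof is correct and follows essentially the same route as the paper: establish that $\sigma$ is faithful, then combine linearity (Lemma \ref{100}) with Lemma \ref{3new}(r). The only difference is cosmetic — the paper derives faithfulness directly from finite order via $0=\sigma(x^n)\ge\sigma(x)^n$, while you derive it from simplicity and the fact that $\Ker(\sigma)$ is a proper filter; both are valid.
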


\begin{proof}  Assume that for $0<x<1$ we have $\sigma(x)=1.$  Then
there is an integer $n\ge 1$ such that $x^n=0$ and hence
$0=\sigma(0)=\sigma(x^n)\ge \sigma(x)^n=1,$ absurd. Therefore,
$\sigma$ is faithful.

Due to Lemma \ref{100}, $A$ is linear, and therefore, $\sigma$ is
the identity as it follows from $(r)$ of Proposition \ref{3new}.
\end{proof}

We note that every locally finite BL-algebra is in fact an MV-chain,
see e.g. \cite[Thm 2.17]{DiGeIo}.

\begin{rem}\label{re:ex}  Due to Theorem \ref{13}, a state $s$ on a
BL-algebra $A$ is a state-morphism iff $\Ker(s)$ is a maximal
filter. This is not true for state-operators: There are
state-morphism BL-algebras  $(A,\sigma)$ such that $\Ker(\sigma)$ is
not necessarily  a maximal (state-) filter (for the definition of a state-filter, see the beginning of the next section). Indeed, the identity
operator on $\mathbb I_G$ and $\mathbb I_P$ are state-morphism-operators but its kernel is not a maximal (state-) filter.
\end{rem}

\section{State-Filters and  Congruences of State BL-algebras}\label{sec5}%5

In this section, we concentrate ourselves to filters, state-filters,
maximal state-filters,  congruences on state BL-algebras, and
relationships between them.  In contrast to BL-algebras when every
subdirectly irreducible state BL-algebra is  linearly ordered,  for
the variety of state BL-algebras, this is not necessarily the case.
However, the image of such a subdirectly irreducible state
BL-algebra is always linearly ordered.

\begin{Def}
Let $(A, \sigma)$ be a state BL-algebra (or a state-morphism
BL-algebra). A nonempty set $F\subseteq A$ is called a {\it
state-filter} (or a {\it state-morphism filter}) of $A$ if $F$ is a
filter of $A$ such that if $x\in F,$ then $\sigma(x)\in F.$ A proper
state-filter of $A$ that is not contained as a proper subset in any
other proper filter of $A$ is said to be {\it maximal}. We denote by
$\Rad_\sigma(A)$ the intersection of all maximal state-filters of
$(A,\sigma).$
\end{Def}

For example, $\Ker(\sigma):=\{x\in A \mid \sigma(x)=1\}$ is a
state-filter of $(A,\sigma).$

We recall that there is a one-to-one relationship  between
congruences and state-filters on a state BL-algebra $(A,\sigma)$ as
follows.  If $F$ is a state-filter, then the relation $\sim_F$ given
by $x \sim_F y$ iff $x\to y,y\to x\in F$ is a congruence of the
BL-algebra $A$ and due to Lemma \ref{3new}(h) $\sim_F$ is also a
congruence of the state BL-algebra $(A,\sigma).$

Conversely, let $\sim$ be a congruence of $(A,\sigma)$ and set
$F_\sim:=\{x\in A \mid x\sim 1\}.$ Then $F_\sim$ is a state-filter
of $(A,\sigma)$ and $\sim_{F_\sim} = \sim$ and $F= F_{\sim_F}.$

It is known that any subdirectly irreducible BL-algebra is linear.
This is not true in general for state BL-algebras.

\begin{ex}\label{ex:5.1}  Let $B$ be a simple BL-algebra, i.e. it
has  only two filters.  Set $A=B\times B$ and let
$\sigma_1(a,b)=(a,a)$ and $\sigma_2(a,b)=(b,a)$ for $(a,b)\in A.$
Then  $\sigma_1$ and $\sigma_2$ are state-morphisms that are also
endomorphisms and $\Ker(\sigma_1)= B\times \{1\},$
$\Ker(\sigma_2)=\{1\}\times B.$ In addition, $(A,\sigma_1)$ and
$(A,\sigma_2)$ are isomorphic subdirectly irreducible state-morphism
BL-algebras that are not linear; $\Ker(\sigma_1)$ and
$\Ker(\sigma_2)$ are the least nontrivial state-filters.
\end{ex}

A little bit more general is the following example:

\begin{ex}\label{ex:5.2}  Let $B$ be a linear BL-algebra, $C$ a
nontrivial subdirectly irreducible BL-algebra with the smallest
nontrivial filter $F_C,$ and let $h:B\to C$ be a BL-homomorphism. On
$A=B\times C$ we define $\sigma_h: A\to A$ by
$$ \sigma(b,c):=(b,h(b)),\quad (b,c)\in B\times C,\eqno(5.1)
$$
Then $(A,\sigma_h)$ is a subdirectly irreducible state-morphism
BL-algebra that is not linearly ordered,  $\Ker(\sigma_h)
=\{1\}\times C$ and $F=\{1\}\times F_C$ is the smallest nontrivial
state-filter of $A.$
\end{ex}

\begin{prop}\label{pr:2.6}  Let $(A,\sigma)$ be a state BL-algebra
and  $X\subseteq A.$ Then the state-filter $F_\sigma(X)$ generated
by $X$ is the set

$$F_\sigma(X)=\{x\in A \mid x\ge (x_1\odot
\sigma(x_1))^{n_1}\odot\cdots \odot (x_k\odot \sigma(x_k))^{n_k}, x_i
\in X, n_i\ge 1, k\ge 1\}.$$

If $F$ is a  state-filter of $A$ and $a \not\in A,$ then the
state-filter of $A$ generated by $F$ and $a$ is the set
$$ F_\sigma(F,a)=\{x \in A: \ x \ge i \odot (a\odot \sigma(a))^n,\
i \in F,\ n \ge 1\}.$$

A proper state-filter $F$ is a maximal state-filter if and only if,
for any $a \not\in F,$ there is an integer $n\ge 1$ such that
$(\sigma(a)^n)^- \in F.$
\end{prop}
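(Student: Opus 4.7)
The plan is to verify the three assertions in order, using that a state-filter is a filter (upward closed, closed under $\odot$) that is in addition closed under $\sigma$, together with Lemma \ref{3new}(b), (d), (j) and the absorption axiom $(4)_{BL}$ to control how $\sigma$ interacts with products and negations. For the description of $F_\sigma(X)$, let $G$ denote the right-hand side; it contains $X$ (take $k=1$, $n_1=1$ and note $x_1\ge x_1\odot\sigma(x_1)$), is upward closed by definition, and is closed under $\odot$ since the product of two expressions of the indicated form has the same form. Closure under $\sigma$ is the main content: given $x\ge\prod_i(x_i\odot\sigma(x_i))^{n_i}$, iterating Lemma \ref{3new}(d) and using $(j)$ yields $\sigma(x)\ge\prod_i(\sigma(x_i)\odot\sigma(\sigma(x_i)))^{n_i}=\prod_i\sigma(x_i)^{2n_i}\ge\prod_i(x_i\odot\sigma(x_i))^{2n_i}\in G$. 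Conversely, any state-filter containing $X$ must contain every $\sigma(x_i)$, all finite products of such elements, and all upper bounds, hence contains $G$.

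The candidate set $H$ for $F_\sigma(F,a)$ is handled analogously: it contains $F$ (take $i=j\in F$ and any $n$, since $i\odot(a\odot\sigma(a))^n\le i=j$) and $a$ (take $i=1$, $n=1$); closure under $\odot$ is immediate from $(i_1\odot(a\odot\sigma(a))^{n_1})\odot(i_2\odot(a\odot\sigma(a))^{n_2})=(i_1\odot i_2)\odot(a\odot\sigma(a))^{n_1+n_2}$ with $i_1\odot i_2\in F$; and for closure under $\sigma$, the same computation as in the previous paragraph gives $\sigma(x)\ge\sigma(i)\odot\sigma(a)^{2n}\ge\sigma(i)\odot(a\odot\sigma(a))^{2n}$, with $\sigma(i)\in F$ because $F$ itself is a state-filter. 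Minimality among state-filters containing $F\cup\{a\}$ is clear.

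For the maximality characterization, the forward direction invokes part~(2): if $F$ is maximal and $a\notin F$, then $F_\sigma(F,a)=A$, so $0=i\odot a^n\odot\sigma(a)^n$ for some $i\in F$ and $n\ge 1$, giving $i\le(a^n\odot\sigma(a)^n)^-$. Applying $\sigma$, using Lemma \ref{3new}(b), and combining Lemma \ref{3new}(d) with the identity $\sigma(\sigma(a)^n)=\sigma(a)^n$ (which follows from $(4)_{BL}$ by induction, since $\sigma(a)^n$ is itself a $\sigma$-image), one obtains the chain $\sigma(a^n\odot\sigma(a)^n)\ge\sigma(a^n)\odot\sigma(a)^n\ge\sigma(a)^n\odot\sigma(a)^n=\sigma(a)^{2n}$, and hence $\sigma(i)\le(\sigma(a)^{2n})^-$. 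Since $\sigma(i)\in F$ and $F$ is upward closed, $(\sigma(a)^{2n})^-\in F$. Conversely, if $F\subsetneq G$ for some proper state-filter $G$ and $a\in G\setminus F$, then $\sigma(a)\in G$, so $\sigma(a)^n\in G$, while $(\sigma(a)^n)^-\in F\subseteq G$ by hypothesis, giving $0=\sigma(a)^n\odot(\sigma(a)^n)^-\in G$, a contradiction to properness.

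The main obstacle I anticipate is the uniform estimate $\sigma(a^n\odot\sigma(a)^n)\ge\sigma(a)^{2n}$, both in the closure arguments for $G$ and $H$ and in the forward direction of the maximality criterion: one must combine the subadditivity $\sigma(x\odot y)\ge\sigma(x)\odot\sigma(y)$ with the absorption $\sigma(\sigma(x)\odot\sigma(y))=\sigma(x)\odot\sigma(y)$ in the right order, so that no $\sigma(a)^n$-factor is lost when applying $\sigma$ to the mixed product. Everything else reduces to monotonicity, upward closure, and the fact that $a\odot\sigma(a)\le\sigma(a)$.
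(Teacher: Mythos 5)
Your proof is correct and follows essentially the same route as the paper: the paper declares the two generation formulas ``evident'' (you supply the verifications, which are exactly the expected ones using Lemma \ref{3new}(b),(c),(d),(j)), and your maximality argument --- applying $\sigma$ to $0=i\odot(a\odot\sigma(a))^n$ to get $\sigma(i)\le(\sigma(a)^{2n})^-\in F$, and for the converse producing $0=\sigma(a)^n\odot(\sigma(a)^n)^-$ inside any state-filter strictly containing $F$ --- is the same as the paper's. No gaps.
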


\begin{proof} The first two parts are evident.

Now suppose  $F$ is maximal, and let $a \not\in F$. Then
$F_\sigma(F,a) = A$ and there are $i \in F$ and an integer $n\ge 1$
such that $0 = i \odot (a \odot \sigma(a))^n.$ Applying $\sigma$ to
this equality, we have $0 = \sigma(0)\ge \sigma(i) \odot
\sigma(a)^{2n}$. Therefore, $ \sigma(i)\le (\sigma(a)^{2n})^-  \in
F.$

Conversely, let $a$ satisfy the condition.  Then the element
$0=(\sigma(a)^n)^- \odot \sigma(a)^n \in F_\sigma(F,a)$ so that $F$
is maximal.
\end{proof}

Nevertheless a subdirectly irreducible state BL-algebra $(A,\sigma)$
is not necessarily linearly ordered, while $\sigma(A)$ is always linearly
ordered:

\begin{theo}\label{th:5.4} If $(A,\sigma)$ is a subdirectly
irreducible state BL-algebra, then  $\sigma(A)$ is linearly ordered.

If $\Ker(\sigma)=\{1\},$ then $(A,\sigma)$ is subdirectly
irreducible if and only if $\sigma(A)$ is a subdirectly irreducible
BL-algebra.
\end{theo}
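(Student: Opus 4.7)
My plan is to translate both statements into the language of state-filters, using that $(A,\sigma)$ is subdirectly irreducible iff there is a smallest non-trivial state-filter in $A$, and that in any BL-algebra the filter lattice is distributive with $F(x)\cap F(y)=F(x\vee y)$.

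For the first statement I would argue by contradiction. Suppose $\sigma(A)$ is not linear, so there exist incomparable $u,v\in\sigma(A)$. Set $\alpha:=u\to v$ and $\beta:=v\to u$. By axiom $(5)_{BL}$ both $\alpha$ and $\beta$ lie in $\sigma(A)$, both are strictly less than $1$, and by the BL-prelinearity of Definition \ref{def:BL} one has $\alpha\vee\beta=1$. The crucial observation is that since $\sigma(\alpha)=\alpha$ and $\sigma(\beta)=\beta$, the generation formula of Proposition \ref{pr:2.6} reduces the state-filters $F_\sigma(\alpha)$ and $F_\sigma(\beta)$ to the plain BL-filters $F(\alpha)$ and $F(\beta)$; the monotonicity in Lemma \ref{3new}(c) makes these BL-filters automatically state-filters. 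Then $F(\alpha)\cap F(\beta)=F(\alpha\vee\beta)=F(1)=\{1\}$ (the inclusion $\supseteq$ is clear, and for $\subseteq$ one checks $(x\vee y)^{n+m}\le x^n\vee y^m$ by distributing $\odot$ over $\vee$). Thus $A$ has two non-trivial state-filters whose intersection is $\{1\}$, contradicting subdirect irreducibility.

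For the second statement, assume $\Ker(\sigma)=\{1\}$. I would work with the pair of maps: for any BL-filter $G$ of $\sigma(A)$ set $\tilde G:=\{x\in A:\sigma(x)\in G\}$, which is a state-filter of $A$ (routine check using Lemma \ref{3new} and $\sigma^2=\sigma$ on $\sigma(A)$), and for any state-filter $F$ of $A$ observe that $F\cap\sigma(A)$ is a BL-filter of $\sigma(A)$. Under $\Ker(\sigma)=\{1\}$, if $x\in F$ with $x<1$ then $\sigma(x)\in F$ and $\sigma(x)\ne 1$, so $F\cap\sigma(A)$ is non-trivial whenever $F$ is. Assuming $(A,\sigma)$ is subdirectly irreducible with smallest non-trivial state-filter $F_0$, the BL-filter $F_0\cap\sigma(A)$ is non-trivial; for any non-trivial BL-filter $G$ of $\sigma(A)$, $\tilde G$ is a non-trivial state-filter, hence contains $F_0$, which forces $F_0\cap\sigma(A)\subseteq G$, so $F_0\cap\sigma(A)$ is the least non-trivial BL-filter of $\sigma(A)$. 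Conversely, if $\sigma(A)$ has a smallest non-trivial BL-filter $G_0$, then the filter of $A$ generated by $G_0$ (which is automatically a state-filter since its generators are $\sigma$-fixed) is non-trivial, and for any non-trivial state-filter $F$ the non-trivial $F\cap\sigma(A)$ contains $G_0$, hence $F$ contains the filter generated by $G_0$; that filter is therefore the least non-trivial state-filter, so $(A,\sigma)$ is subdirectly irreducible.

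The pivotal point, and the main technical step, is recognising that for elements already fixed by $\sigma$ the apparently more complicated state-filter generation of Proposition \ref{pr:2.6} collapses to ordinary BL-filter generation, so the distributivity of the BL-filter lattice becomes available inside the state-filter picture. For the second statement, the essential role of $\Ker(\sigma)=\{1\}$ is to prevent the trace $F\cap\sigma(A)$ of a non-trivial state-filter from collapsing to $\{1\}$; this is what makes the correspondence $G\leftrightarrow\tilde G$ between non-trivial BL-filters of $\sigma(A)$ and non-trivial state-filters of $A$ well-behaved enough to transport subdirect irreducibility in both directions.
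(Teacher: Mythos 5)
Your proof is correct and follows essentially the same route as the paper's: both parts hinge on the observation that state-filters generated by $\sigma$-fixed elements collapse to ordinary BL-filters, the first part then using $\alpha\vee\beta=1\Rightarrow\alpha^n\vee\beta^m=1$ to produce two nontrivial state-filters meeting in $\{1\}$ (the paper phrases this as the generator $a$ of the monolith being forced to equal $1$), and the second part using the same correspondence $G\mapsto\sigma^{-1}(G)$, $F\mapsto F\cap\sigma(A)$ with $\Ker(\sigma)=\{1\}$ guaranteeing nontriviality of the trace. No gaps.
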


\begin{proof}  Let $F$ be the least nontrivial state-filter of
$(A,\sigma).$ By the minimality of $F$, $F$ is generated by some
element $a<1.$ Hence, $F=\{x\in A\mid x \ge (a\odot \sigma(a))^n,\ n
\ge 1\}.$  Assume that there are two elements $\sigma(x),\sigma(y)
\in \sigma(A)$ such that $\sigma(x)\not\le \sigma(y)$ and
$\sigma(y)\not\le \sigma(x).$ Then $\sigma(x)\to \sigma(y)<1$ and
$\sigma(y)\to \sigma(x) <1$ and let $F_1$ and $F_2$ be the
state-filters generated by $\sigma(x)\to \sigma(y)$ and
$\sigma(y)\to \sigma(x),$ respectively. They are nontrivial,
therefore they contain $F,$ and $a \in F_1\cap F_2.$  Because
$\sigma$ on $\sigma(A)$ is the identity, by Proposition
\ref{pr:2.6}, there is an integer $n \ge 1$ such that $a \ge
(\sigma(x)\to \sigma(y))^n$ and $a \ge (\sigma(y)\to \sigma(x))^n.$
By \cite[Cor 3.17]{DGI1}, $a\ge (\sigma(x)\to \sigma(y))^n \vee
(\sigma(y)\to \sigma(x))^n = 1$ and this is a contradiction.

Since $\sigma$ on $\sigma(A)$ is the identity, then every
state-filter of $\sigma(A)$ is a BL-filter and vice-versa.

Assume $\Ker(\sigma)=\{1\}$ and  let $F$ be the least nontrivial
state-filter of $(A,\sigma).$ Then $F_0:=F\cap \sigma(A)$ is a
nontrivial state-filter of $\sigma(A).$ We assert that $F_0$ is the
least nontrivial filter of $\sigma(A).$ Indeed, let $I$ be another
nontrivial filter of $\sigma(A)$ and let $F'$ be the state-filter of
$(A,\sigma)$ generated by $I.$ Then $F' \supseteq F$ and $I= F'\cap
\sigma(A) \supseteq F \cap \sigma(A)=F_0$ proving that $F_0$ is the
least state-filter.

Conversely, assume that $J$ is the least nontrivial filter of
$\sigma(A).$ Let $F(J)$ be the state-filter of $(A,\sigma)$
generated by $J.$ We assert that $F(J)$ is the least nontrivial
state-filter of $(A,\sigma).$ Let $F$ be any nontrivial state-filter
of $(A,\sigma).$ Then $F \cap \sigma(A)$ is a nontrivial filter of
$\sigma(A)$, hence $F\cap \sigma(A) \supseteq J$ which proves $F
\supseteq F(J).$
\end{proof}

%{\bf REMARK}  The following statements also in the next section are
%also true if $\sigma$ is a state-operator preserving $\odot$ !!!
%Find an example that (5.2) is a proper inclusion.

\begin{prop}\label{30} If $\sigma$ is a state-operator on a
BL-algebra $A$, then
$$\Rad(\sigma(A)) \subseteq \sigma(\Rad(A)).\eqno(5.2)
$$  If,
in addition, $\sigma$ is a strong state-operator on $A$, then
$$\sigma(\Rad(A))=\Rad(\sigma(A)). \eqno(5.3)$$
\end{prop}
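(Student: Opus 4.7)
The plan has two parts. First, I would prove inclusion (5.2) by identifying $\Rad(\sigma(A))$ with $\sigma(A) \cap \Rad(A)$. Since $\sigma(A)$ is a BL-subalgebra of $A$ (Lemma \ref{3new}(k)), the operations $\odot$ and $^{-}$ computed inside $\sigma(A)$ agree with those in $A$, so the characterisation of the radical in Proposition \ref{27''} gives $\Rad(\sigma(A)) = \sigma(A) \cap \Rad(A)$. Now take $y \in \Rad(\sigma(A))$. Then $y \in \sigma(A)$, so by Lemma \ref{3new}(l) we have $y = \sigma(y)$, and since also $y \in \Rad(A)$, this yields $y = \sigma(y) \in \sigma(\Rad(A))$. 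This is the easy half and needs no strength.

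For the reverse inclusion under the assumption that $\sigma$ is strong, the goal is to prove $\sigma(\Rad(A)) \subseteq \Rad(A)$; together with Part 1 this forces equality. Fix $x \in \Rad(A)$; I must verify $(\sigma(x)^n)^{-} \le \sigma(x)$ for every $n \ge 1$. The crucial auxiliary identity is
\[
\sigma(x^n) = \sigma(x)^n \qquad \text{for all } n \ge 1.
\]

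I would prove this by induction using the equality case of Lemma \ref{3}(a), which asserts $\sigma(u \odot v) = \sigma(u) \odot \sigma(v)$ whenever $u^{-} \le v$. Setting $u = x^n$ and $v = x$, the hypothesis $u^{-} \le v$ reads $(x^n)^{-} \le x$, which is precisely the defining condition of $x \in \Rad(A)$ from Proposition \ref{27''}. Hence $\sigma(x^{n+1}) = \sigma(x^n) \odot \sigma(x) = \sigma(x)^n \odot \sigma(x) = \sigma(x)^{n+1}$, completing the induction.

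With this identity in hand, apply $\sigma$ to the radical inequality $(x^n)^{-} \le x$ and use Lemma \ref{3new}(b),(c) to obtain
\[
(\sigma(x)^n)^{-} = \sigma(x^n)^{-} = \sigma((x^n)^{-}) \le \sigma(x),
\]
so $\sigma(x) \in \Rad(A)$, and since $\sigma(x) \in \sigma(A)$, in fact $\sigma(x) \in \sigma(A) \cap \Rad(A) = \Rad(\sigma(A))$. I expect the main obstacle to be precisely recognising that the radical condition on $x$ is exactly the hypothesis needed to iterate the equality clause of Lemma \ref{3}(a): without strength we only get $\sigma(x^n) \ge \sigma(x)^n$, which after negation goes in the wrong direction and does not allow one to conclude $(\sigma(x)^n)^{-} \le \sigma(x)$.
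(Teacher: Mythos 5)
Your proof is correct and follows essentially the same route as the paper: the easy inclusion comes from the co-infinitesimal characterisation $\Rad(\sigma(A))=\sigma(A)\cap\Rad(A)$ together with $y=\sigma(y)$, and the reverse inclusion for strong $\sigma$ rests on the induction $\sigma(x^{n+1})=\sigma(x^n)\odot\sigma((x^n)^-\vee x)=\sigma(x^n)\odot\sigma(x)$, where the radical condition $(x^n)^-\le x$ is exactly what collapses the join. Your use of the equality clause of Lemma \ref{3}(a) is just the packaged form of the paper's direct appeal to $(3')_{BL}$, so no substantive difference.
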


\begin{proof}
Let $\sigma$ be a state-operator on $A$ and choose $y\in
\Rad(\sigma(A)).$ Since $(y^n)^-\leq y $ for all integers $n\geq 1,$
then $y\in \Rad(A).$ But $y=\sigma(y),$ so $y\in \sigma(\Rad(A)).$
Thus $\Rad(\sigma(A))\subseteq \sigma(\Rad(A)).$

Let now $\sigma$ be a strong state-operator on $A.$ Let $x\in
\Rad(A),$ then from Proposition \ref{27''} we have $(x^n)^-\leq x$
for any $n\in \N.$ As $\sigma$ is increasing, we get
$\sigma(x^n)^-\leq \sigma(x).$ Using condition $(3')_{BL}$ we obtain
$\sigma(x\odot x)=\sigma(x)\odot \sigma(x^- \vee x) = \sigma(x)\odot
\sigma(x)$ and by induction, we have,
$\sigma(x^{n+1})=\sigma(x)\odot \sigma(x^-\vee x)\odot \cdots \odot
\sigma((x^n)^- \vee x)= \sigma(x)^{n+1}.$ Hence,
$\sigma(x^n)=\sigma(x)^n$ and thus $(\sigma(x)^n)^-\leq \sigma(x),$
for any $n\in \N$. Therefore, $\sigma(x)\in \Rad(\sigma(A)),$ and
thus $\sigma(\Rad(A))\subseteq \Rad(\sigma(A)).$
\end{proof}

\begin{prop}\label{pr:2.7}  Let $F$ be a maximal state-filter
of a state BL-algebra $(A,\sigma)$ and let, for some $a \in A,$
$\sigma(a)/F$ be co-infinitesimal in $A/F.$ Then $\sigma(a) \in F.$
\end{prop}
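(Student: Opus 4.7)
The plan is to proceed by contradiction, combining the maximality criterion for state-filters from Proposition \ref{pr:2.6} with the characterization of the radical from Proposition \ref{27''}.

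Suppose $\sigma(a)\notin F$. Since $F$ is a maximal state-filter, the criterion in the last clause of Proposition \ref{pr:2.6} applied to $\sigma(a)$ yields an integer $n\ge 1$ with $(\sigma(\sigma(a))^n)^-\in F$. Using the idempotence of $\sigma$ given by Lemma \ref{3new}(j), namely $\sigma(\sigma(a))=\sigma(a)$, this simplifies to $(\sigma(a)^n)^-\in F$.

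Next I pass to the quotient BL-algebra $A/F$. The canonical projection $x\mapsto x/F$ is a BL-homomorphism, so it preserves $\odot$ and $^{-}$; hence $(\sigma(a)^n)^-/F = ((\sigma(a)/F)^n)^-$, and since $(\sigma(a)^n)^-\in F$ this equals $1/F$. On the other hand, the hypothesis that $\sigma(a)/F$ is co-infinitesimal in $A/F$ together with Proposition \ref{27''} gives $((\sigma(a)/F)^m)^-\le \sigma(a)/F$ for every $m\ge 1$, and in particular for $m=n$. Combining these, $1/F \le \sigma(a)/F$, whence $\sigma(a)/F = 1/F$, i.e.\ $\sigma(a)\in F$. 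This contradicts the assumption $\sigma(a)\notin F$, so $\sigma(a)\in F$.

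The only non-routine step is the invocation of the correct maximality characterization: one must remember that for a maximal \emph{state}-filter (as opposed to an ordinary maximal filter) the criterion involves $\sigma(a)$ rather than $a$, and then one needs Lemma \ref{3new}(j) to eliminate the iterated $\sigma$. Once this is in place, the argument is just a one-line comparison inside $A/F$.
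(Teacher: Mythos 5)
Your proof is correct and follows essentially the same route as the paper: assume $\sigma(a)\notin F$, invoke the maximality criterion of Proposition \ref{pr:2.6} (with the idempotence $\sigma(\sigma(a))=\sigma(a)$ from Lemma \ref{3new}(j)) to get $(\sigma(a)^n)^-\in F$, and then compare with the co-infinitesimality condition in $A/F$ to force $\sigma(a)/F=1/F$, a contradiction. The paper's proof is just a more compressed version of the same argument.
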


\begin{proof}  Let $(\sigma(a)^n)^-/F \le \sigma(a)/F$ for each $n
\ge 1.$  If $\sigma(a) \notin F,$  by Proposition \ref{pr:2.6},
there is an integer $n\ge 1$ such that $(\sigma(a)^n)^- \in F$.
Hence $(\sigma(a)^n)^-/F =  1/F = \sigma(a)/F,$  so that $\sigma(a)
\in F$ that is a contradiction. Therefore, $\sigma(a) \in F.$
\end{proof}

\begin{prop}\label{pr:2.8} $(1)$ Let $N$ be a state BL-subalgebra
of a state BL-algebra $(A,\sigma)$. If $J$ is a maximal state-filter
of $A$, so is $I=J\cap N$ in $N.$  Conversely, if $I$ is a maximal
state-filter of $N,$ there is a maximal state-filter $J$ of $A$ such
that $I=J\cap N.$

$(2)$ If $I$ is a state-filter  (maximal state-filter) of
$(A,\sigma)$, then $\sigma(I)$ is a filter (maximal filter) of
$\sigma(A)$ and $\sigma(I) = I \cap \sigma(A).$

$(3)$ If $I$ is a (maximal) filter of $\sigma(A)$, then
$\sigma^{-1}(I)$ is a (maximal) state-filter of $A$ and
$\sigma^{-1}(I) \cap \sigma(A)= I.$

\end{prop}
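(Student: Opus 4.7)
The unifying tool for all three parts is the maximality criterion of Proposition \ref{pr:2.6}: a proper state-filter $F$ of $(A,\sigma)$ is maximal iff for every $a\notin F$ some power $(\sigma(a)^n)^-$ lies in $F$. Combined with the idempotence $\sigma(\sigma(x))=\sigma(x)$ and the description $\sigma(A)=\{x\in A:x=\sigma(x)\}$ of Lemma \ref{3new}(j),(l) (which exhibits $\sigma(A)$ as a BL-subalgebra on which $\sigma$ acts as the identity), this criterion reduces each verification to routine bookkeeping.

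For part $(1)$, the set $I:=J\cap N$ is visibly a state-filter of $N$ because both $J$ and $N$ are closed under $\sigma$. For its maximality, I would take $a\in N\setminus I$; then $a\notin J$, so by maximality of $J$ there is $n\ge 1$ with $(\sigma(a)^n)^-\in J$, and since $N$ is a state BL-subalgebra this element also lies in $N$, hence in $I$. For the converse, given a maximal state-filter $I$ of $N$, the upward closure of $I$ inside $A$ is a proper state-filter of $A$ (proper because $I$ is already $\odot$- and $\sigma$-closed, so $0\notin I$ forces $0$ out of the closure); a standard Zorn argument extends it to a maximal state-filter $J$ of $A$. Then $J\cap N$ is a proper state-filter of $N$ containing $I$, and maximality of $I$ forces $J\cap N=I$.

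For part $(2)$, the identity $\sigma(I)=I\cap\sigma(A)$ falls out directly from $\sigma(A)=\{x:x=\sigma(x)\}$ combined with the defining property $x\in I\Rightarrow\sigma(x)\in I$ of a state-filter. To see that $\sigma(I)$ is a filter of $\sigma(A)$, I would use axiom $(4)_{BL}$ for $\odot$-closure (so that $\sigma(x)\odot\sigma(y)$ already lies in $\sigma(A)$ and in $I$) and inherit upward closure from $I$. If moreover $I$ is maximal and $z\in\sigma(A)\setminus\sigma(I)$, then $z\notin I$, so Proposition \ref{pr:2.6} supplies $n$ with $(\sigma(z)^n)^-=(z^n)^-\in I\cap\sigma(A)=\sigma(I)$, which is precisely the BL-algebra maximality criterion of Proposition \ref{27} applied inside $\sigma(A)$ (where $\sigma$ is the identity).

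For part $(3)$, $\sigma^{-1}(I)$ is nonempty (it contains $1$), closed under $\odot$ by Lemma \ref{3new}(d) together with upward closure of $I$ in $\sigma(A)$, upward closed by Lemma \ref{3new}(c), $\sigma$-closed by Lemma \ref{3new}(j), and proper because $\sigma(0)=0\notin I$. The equality $\sigma^{-1}(I)\cap\sigma(A)=I$ is immediate since $\sigma$ is the identity on $\sigma(A)$. Finally, for $a\notin\sigma^{-1}(I)$ we have $\sigma(a)\notin I$; maximality of $I$ in $\sigma(A)$ supplies $(\sigma(a)^n)^-\in I\subseteq\sigma^{-1}(I)$, and Proposition \ref{pr:2.6} then yields maximality of $\sigma^{-1}(I)$. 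The only genuinely delicate point in the whole argument is ensuring in $(1)$ that the Zornian extension of $I$ remains proper, which is why I emphasize that $I$ is already closed under all operations needed to generate a state-filter in $A$.
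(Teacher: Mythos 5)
Your proposal is correct and follows essentially the same route as the paper: part (1) via the upward closure of $I$ (the generated state-filter) extended to a maximal one, and parts (2) and (3) via the identity $\sigma(I)=I\cap\sigma(A)$, the fact that $\sigma$ acts as the identity on the subalgebra $\sigma(A)$, and the maximality criteria of Propositions \ref{pr:2.6} and \ref{27}. The only difference is that you spell out some verifications the paper leaves as "evident" or "an easy calculation"; no gaps.
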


\begin{proof} $(1)$ Since $J\cap N$ is a state-filter of $N$,
the first half of the first statement follows from Proposition
\ref{pr:2.6}.  Conversely, let $I$ be a maximal state-filter of $N$
and let $F(I)=\{x\in A\mid x\ge i$ for some $i \in I\}$ be the
state-filter of $(A,\sigma)$ generated by $I.$ Since $0\notin F(I),$
choose a maximal state-filter $J$ of $A$ containing $F(I)$.  Then
$J\cap N \supseteq F(I)\cap N = I.$  The maximality of $I$ entails
$J\cap N = I.$

$(2)$ Let $I$ be a state-filter  of  $(A,\sigma).$   An easy
calculation shows that $\sigma(I) = I\cap \sigma(A),$ therefore by
(1), $\sigma(I)$ is a filter of $\sigma(A).$

Let now $I$ be maximal and suppose now that $\sigma(a) \not\in
\sigma(I).$ Therefore, $a \not\in I$ and there is an integer such
that $(\sigma(a)^n)^- \in I$ and hence, $(\sigma(a)^n)^- \in
\sigma(I),$ so that $\sigma(I)$ is a maximal filter in $\sigma(A).$

$(3)$ Finally, using the basic properties of $\sigma$, we have that
$\sigma^{-1}(I)$ is a filter  of $A.$  Suppose now $y \in
\sigma^{-1}(I),$ then $\sigma(y) \in I$ and $\sigma(y) =
\sigma(\sigma(y)) \in \sigma^{-1}(I)$ proving $\sigma^{-1}(I)$ is a
state-filter of $(A,\sigma).$

Now, suppose $I$ is a maximal filter of $\sigma(A)$,  then $0\notin
\sigma^{-1}(I),$ and let $x \not\in \sigma^{-1}(I).$ Hence
$\sigma(x) \not\in I$ and the maximality of $I$  implies that there
is an integer $n \ge 1$ such that $(\sigma(x)^n)^- \in I$. Then
$\sigma((\sigma(x)^n)^-) = (\sigma(x)^n)^- \in I,$ and
$(\sigma(x)^n)^- \in \sigma^{-1}(I).$ By Proposition \ref{pr:2.6},
$\sigma^{-1}(I)$ is a maximal state-filter of $(A,\sigma).$
\end{proof}

\begin{prop}\label{pr:2.9}  Let $(A,\sigma)$ be a state
BL-algebra. Then
$$ \sigma(\mbox{\rm Rad}(A)) \supseteq \mbox{\rm Rad}(\sigma(A)) =
\sigma(\mbox{\rm Rad}_\sigma(A)).$$

\end{prop}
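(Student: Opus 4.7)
The proof splits naturally into two parts: the inclusion $\sigma(\mathrm{Rad}(A)) \supseteq \mathrm{Rad}(\sigma(A))$ and the equality $\mathrm{Rad}(\sigma(A)) = \sigma(\mathrm{Rad}_\sigma(A))$. The first part is immediate, since it is exactly the inclusion (5.2) already established in Proposition \ref{30}, whose proof used only that an element $y \in \mathrm{Rad}(\sigma(A))$ satisfies $(y^n)^- \le y$ and $y = \sigma(y)$. So the entire task reduces to verifying the equality.

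For the equality, the plan is to exploit the correspondence between maximal state-filters of $(A,\sigma)$ and maximal filters of $\sigma(A)$ given by Proposition \ref{pr:2.8}. Specifically, Proposition \ref{pr:2.8}(2) sends a maximal state-filter $I$ of $(A,\sigma)$ to the maximal filter $\sigma(I) = I \cap \sigma(A)$ of $\sigma(A)$, and Proposition \ref{pr:2.8}(3) sends a maximal filter $J$ of $\sigma(A)$ to the maximal state-filter $\sigma^{-1}(J)$ of $A,$ with $\sigma(\sigma^{-1}(J)) = \sigma^{-1}(J) \cap \sigma(A) = J$. Consequently, as $I$ ranges over all maximal state-filters of $(A,\sigma)$, the sets $\sigma(I) = I \cap \sigma(A)$ range over exactly the maximal filters of $\sigma(A)$.

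For the inclusion $\sigma(\mathrm{Rad}_\sigma(A)) \subseteq \mathrm{Rad}(\sigma(A))$, I would take $x = \sigma(y)$ with $y \in I$ for every maximal state-filter $I$. Then $x = \sigma(y) \in \sigma(I) = I \cap \sigma(A)$, and by the correspondence every maximal filter of $\sigma(A)$ has the form $\sigma(I)$ for some such $I$; hence $x$ lies in every maximal filter of $\sigma(A)$, so $x \in \mathrm{Rad}(\sigma(A))$. For the reverse inclusion $\mathrm{Rad}(\sigma(A)) \subseteq \sigma(\mathrm{Rad}_\sigma(A))$, I would take $x \in \mathrm{Rad}(\sigma(A))$; since $x \in \sigma(A)$, Lemma \ref{3new}(l) gives $x = \sigma(x)$. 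For any maximal state-filter $I$ of $(A,\sigma)$ the set $\sigma(I) = I \cap \sigma(A)$ is a maximal filter of $\sigma(A)$, hence contains $x$, whence $x \in I$. Therefore $x \in \mathrm{Rad}_\sigma(A)$, and $x = \sigma(x) \in \sigma(\mathrm{Rad}_\sigma(A))$.

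The argument is essentially bookkeeping once Proposition \ref{pr:2.8} is in hand; no obstacle is expected. The only subtle point to verify carefully is that the assignment $I \mapsto I \cap \sigma(A)$ on maximal state-filters is surjective onto maximal filters of $\sigma(A)$, which is precisely what Proposition \ref{pr:2.8}(3) guarantees through the preimage construction, so the intersection over all $\sigma(I)$ indeed equals $\mathrm{Rad}(\sigma(A))$.
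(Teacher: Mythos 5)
Your proof is correct and follows essentially the same route as the paper's: the first inclusion is quoted from Proposition \ref{30}, and both directions of the equality rest on the correspondence of Proposition \ref{pr:2.8} between maximal state-filters of $(A,\sigma)$ and maximal filters of $\sigma(A)$ (via $I\mapsto \sigma(I)=I\cap\sigma(A)$ and $J\mapsto\sigma^{-1}(J)$ with $\sigma(\sigma^{-1}(J))=J$). Your converse direction is marginally cleaner in that you take the witness $y=x$ directly, using $x=\sigma(x)$ from Lemma \ref{3new}(l), rather than an arbitrary preimage as the paper does, but the argument is the same.
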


\begin{proof} The left-side inclusion follows from (5.2).

Suppose $x \in \mbox{Rad}_\sigma(A).$ Then $x \in I$ for every
maximal state-filter $I$ on $(A,\sigma)$
 and $\sigma(x) \in \sigma(I) = I \cap
\sigma(A)$ and $\sigma(I)$ is a maximal filter of $\sigma(A)$ by
Proposition \ref{pr:2.8}. If now $J$ is a maximal filter of
$\sigma(A),$ by Proposition \ref{pr:2.8}, $\sigma^{-1}(J)$ is a
maximal state-filter of $(A, \sigma(A))$ and $\sigma(x) \in
\sigma(\sigma^{-1}(J))= J$. Therefore, $\sigma(x) \in
\mbox{Rad}(\sigma(A)),$ and $\sigma(\mbox{Rad}_\sigma(A)) \subseteq
\mbox{Rad}(\sigma(A)).$

%%%%%%%%%%%
%Due to Proposition 5.8, for each maximal state-filter $J$ on $(A,
%\sigma),$  $\sigma(J)$ is a maximal state-filter on $\sigma(A)$,
%$\sigma(J) = J\cap \sigma(A)$ and $\sigma^{-1}(\sigma(J)) = J$. So
%$x\in \sigma(J)$ and  $y\in \sigma^{-1}(\sigma(J)) = J$ for any
%maximal state-filter $J$ on $(A, \sigma)$. (Pokracuje Therefore ...)

%%%%%%%%%%%
Conversely, let $x \in \mbox{Rad}(\sigma(A)).$ Then $x \in I$ for
every maximal filter $I$ on $\sigma(A).$  Hence, if $\sigma(y) = x$,
then $y \in \sigma^{-1}(I)$ and $\sigma^{-1}(I)$ is a maximal
state-filter of $(A,\sigma).$ Suppose that $y \in J$ for each
maximal state-filter of $(A,\sigma).$ Then $x \in \sigma(J) = J \cap
\sigma(A)$ and $\sigma(J)$ is a maximal filter of  $\sigma(A)$ and
$y \in \sigma^{-1}(\sigma(J)) = J.$ Therefore, $y \in
\mbox{Rad}_\sigma(A)$ and $x \in \sigma(\mbox{Rad}_\sigma(A)),$
giving $\mbox{Rad}(\sigma(A)) \subseteq
\sigma(\mbox{Rad}_\sigma(A)).$
\end{proof}

\begin{opprob} (1) Describe subdirectly irreducible elements of
state-morphism BL-algebras.

(2) Does there exist a state BL-algebra such that in (5.2) we have a
proper inclusion~?
\end{opprob}

\section{States on State BL-algebras}%6

In the present section, we give  relations between states on
BL-algebras and state-operators. We show that starting from a state
BL-algebra $(A,\sigma)$ we can always define a state on $A$:  Take a
maximal filter $F$ of the BL-algebra $\sigma(A).$  Then the quotient
$\sigma(A)/F$ is isomorphic to a subalgebra of the standard
MV-algebra of the real interval $[0,1]$ such that the mapping
$s:\sigma(a) \mapsto \sigma(a)/F$ is an extremal state on
$\sigma(A).$ And this can define a state on $A$ as it is shown in
the next two statements. Some reverse process in a nearer sense is
also possible, i.e. starting with a state $s$ on $A$, we can define
a state-operator on the tensor product of $[0,1]$ and an appropriate
MV-algebra connected with $s$ and $A,$ see the last remark of this
section.

As it was already shown before, due to \cite{DvRa}, the notions of a
Bosbach state and a Rie\v{c}an state coincide for BL-algebras.

\begin{prop}\label{9}
Let $(A, \sigma)$ be a state BL-algebra, and let $s$ be a  state on
$\sigma(A)$. Then $s_{\sigma}(x):=s(\sigma(x)),$ $x \in A,$ is a
state on $A.$
\end{prop}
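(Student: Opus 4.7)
The plan is to verify that $s_\sigma$ satisfies the two Bosbach axioms $(BS1)$ and $(BS2)$, using that $\sigma(A)$ is a BL-subalgebra of $A$ by Lemma \ref{3new}$(k)$ so that $s:\sigma(A)\to[0,1]$ has all the properties of a state on that subalgebra.

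First I would dispatch $(BS2)$. By $(1)_{BL}$, $s_\sigma(0)=s(\sigma(0))=s(0)=0$, and by Lemma \ref{3new}$(a)$, $s_\sigma(1)=s(\sigma(1))=s(1)=1$.

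The main step is $(BS1)$: show that for all $x,y\in A$,
\[
s(\sigma(x))+s(\sigma(x\rightarrow y))=s(\sigma(y))+s(\sigma(y\rightarrow x)).
\]
The obstacle is that $\sigma$ need not preserve $\rightarrow$, so $\sigma(x\rightarrow y)\neq \sigma(x)\rightarrow\sigma(y)$ in general. I would bypass this by invoking axiom $(2)_{BL}$: $\sigma(x\rightarrow y)=\sigma(x)\rightarrow\sigma(x\wedge y)$, and likewise $\sigma(y\rightarrow x)=\sigma(y)\rightarrow\sigma(x\wedge y)$. Now everything lives in $\sigma(A)$ where $s$ is a state, and by Lemma \ref{3new}$(c)$ the element $\sigma(x\wedge y)$ lies below both $\sigma(x)$ and $\sigma(y)$.

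Applying $(BS1)$ for $s$ on $\sigma(A)$ to the pair $(\sigma(x),\sigma(x\wedge y))$ gives
\[
s(\sigma(x))+s(\sigma(x)\rightarrow\sigma(x\wedge y))=s(\sigma(x\wedge y))+s(\sigma(x\wedge y)\rightarrow\sigma(x))=s(\sigma(x\wedge y))+1,
\]
since $\sigma(x\wedge y)\leq\sigma(x)$. The identical argument applied to $(\sigma(y),\sigma(x\wedge y))$ yields
\[
s(\sigma(y))+s(\sigma(y)\rightarrow\sigma(x\wedge y))=s(\sigma(x\wedge y))+1.
\]
Equating the left-hand sides of the two displays and substituting back via $(2)_{BL}$ gives exactly the desired identity $(BS1)$ for $s_\sigma$. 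Since $s_\sigma$ takes values in $[0,1]$ by construction, this completes the verification that $s_\sigma$ is a state on $A$.
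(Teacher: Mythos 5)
Your proof is correct, and it takes a genuinely different route from the paper's. You verify the Bosbach axioms directly: you use $(2)_{BL}$ to rewrite $\sigma(x\to y)$ as $\sigma(x)\to\sigma(x\wedge y)$, push everything into the subalgebra $\sigma(A)$ (Lemma \ref{3new}$(k)$), and then exploit $(BS1)$ for $s$ on the pairs $(\sigma(x),\sigma(x\wedge y))$ and $(\sigma(y),\sigma(x\wedge y))$, where the inequality $\sigma(x\wedge y)\le\sigma(x),\sigma(y)$ from Lemma \ref{3new}$(c)$ collapses one summand to $s(1)=1$; both sides of $(BS1)$ for $s_\sigma$ then equal $s(\sigma(x\wedge y))+1$. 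The paper instead verifies the Rie\v{c}an axioms: it shows $x\perp y$ implies $\sigma(x)\perp\sigma(y)$ and $\sigma(x+y)=\sigma(x)+\sigma(y)$, and then relies on the equivalence of Bosbach and Rie\v{c}an states from \cite{DvRa} to conclude. Your argument is somewhat more self-contained, since it never leaves the Bosbach framework or invokes the orthogonality relation and partial addition; the paper's version is the natural one if one prefers to think of states as additive functionals on orthogonal pairs. Both are complete proofs.
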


\begin{proof} We show that $s_\sigma$ is a Rie\v{c}an state.
Let $x\bot y.$ Then $x^{--}\leq y^-$ and as $\sigma$ is increasing, we obtain $\sigma(x^{--})\leq \sigma(y^-),$ or equivalently, $\sigma(x)^{--}\leq \sigma(y)^-,$ which implies $\sigma(x)\bot \sigma(y).$ And thus $\sigma(x)+\sigma(y)=\sigma(y)^-\rightarrow \sigma(x)^{--}=\sigma(x)^-\rightarrow \sigma(y)^{--}.$\\
We will prove that $\sigma(x+y)=\sigma(x)+\sigma(y)$ when $x\bot y.$\\
We have: $\sigma(x+y)=\sigma(x\oplus y)=\sigma(x^-\rightarrow
y^{--})=\sigma(x)^-\rightarrow \sigma(x^-\wedge y^{--})$ using axiom
$(2)_{BL}.$
Since $x^{--}\leq y^-, $ i.e. $y^{--}\leq x^-,$ we get $\sigma(x\oplus y)=\sigma(x)^-\rightarrow \sigma(y)^{--}=\sigma(x)+\sigma(y).$ Thus we get $\sigma(x+y)=\sigma(x)+\sigma(y).$\\
We check now that $s_{\sigma}$ is a Rie\v can state.\\
$(RS1_{\sigma})$: $s_{\sigma}(x+y)=s(\sigma(x+y))=s(\sigma(x)+\sigma(y))=s_{\sigma}(x)+s_{\sigma}(y)$ for $x\bot y.$\\
$(RS2_{\sigma})$: $s_{\sigma}(0)=s(\sigma(0))=s(0)=0,$ using $(1)_{BL}$ and $RS2.$\\
Thus $s_{\sigma}$ is a Rie\v can state on $A.$
\end{proof}

\begin{prop}\label{15}
Let $(A, \sigma)$ be a state-morphism BL-algebra, and let $s$ be an
extremal state on $\sigma(A).$ Then $s_{\sigma}(a):=s(\sigma(a)),$
$a\in A,$ is an extremal state on $A.$
\end{prop}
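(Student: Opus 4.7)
The plan is to reduce everything to the characterization of extremal states given by Theorem \ref{13}. By Proposition \ref{9}, once we know that $s$ is a state on $\sigma(A)$, the composite $s_\sigma = s \circ \sigma$ is automatically a state on $A$; so the only task left is to verify one of the equivalent conditions of Theorem \ref{13} for $s_\sigma$ on $A$.

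The cleanest condition to check is preservation of $\odot$, that is, $s_\sigma(x \odot y) = s_\sigma(x) \odot s_\sigma(y)$ for all $x, y \in A$, where the right-hand $\odot$ is the MV-operation $\max\{r + t - 1, 0\}$ on $[0,1]$. Here we use crucially the extra hypothesis that $\sigma$ is a state-morphism-operator: by axiom $(6)_{BL}$, $\sigma$ preserves $\odot$, so $\sigma(x \odot y) = \sigma(x) \odot \sigma(y)$, with $\sigma(x), \sigma(y) \in \sigma(A)$. On the other hand, $s$ is extremal on the BL-algebra $\sigma(A)$, so Theorem \ref{13} applied within $\sigma(A)$ gives $s(u \odot v) = s(u) \odot s(v)$ for all $u, v \in \sigma(A)$. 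Composing these two identities yields
\[
 s_\sigma(x \odot y) = s(\sigma(x) \odot \sigma(y)) = s(\sigma(x)) \odot s(\sigma(y)) = s_\sigma(x) \odot s_\sigma(y).
\]

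Having verified this identity, Theorem \ref{13} applied now inside $A$ tells us that $s_\sigma$ is a state-morphism on $A$, hence an extremal state, which is exactly what we want. There is no real obstacle here: the proof is essentially a one-line composition, and the only hypothesis doing work is $(6)_{BL}$ (which is the defining extra property of a state-morphism-operator and fails in general for arbitrary state-operators, explaining why we need the stronger assumption of Proposition \ref{15} rather than just the hypotheses of Proposition \ref{9}).
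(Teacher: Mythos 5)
Your proof is correct and is essentially identical to the paper's own argument: both invoke Proposition \ref{9} to get that $s_\sigma$ is a state, then verify $s_\sigma(x\odot y)=s_\sigma(x)\odot s_\sigma(y)$ by chaining $(6)_{BL}$ with the extremality of $s$ on $\sigma(A)$, and conclude via Theorem \ref{13}. (As a minor aside, you correctly write the MV-operation on $[0,1]$ with $\max$, where the statement of Theorem \ref{13} in the paper has a typo with $\min$.)
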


\begin{proof}
By Proposition \ref{9}, $s_{\sigma}$ is a state on $A.$ Then
$s_{\sigma}(x\odot y)=s(\sigma(x\odot y))=s(\sigma(x)\odot
\sigma(y))=s(\sigma(x))\odot s(\sigma(y))=s_{\sigma}(x)\odot
s_{\sigma}(y).$ Using Theorem \ref{13}, $s_{\sigma}$ is an extremal
state on $A.$
\end{proof}

\begin{Def}\label{16}
Let $(A, \sigma)$ be a state BL-algebra and $s$ a  state on $A.$ We
call $s$ a {\it $\sigma$-compatible  state} if and only if
$\sigma(x)=\sigma(y)$ implies $s(x)=s(y)$ for  $x, y \in A.$  We
denote by ${\mathcal S}_{\rm com}(A,\sigma)$ the set of all
$\sigma$-compatible states on $(A,\sigma).$
\end{Def}

In what follows, we show that ${\mathcal S}_{\rm com}(A,\sigma) \ne
\emptyset$ and, in addition, ${\mathcal S}_{\rm com}(A,\sigma)$ is
affinely homeomorphic with ${\mathcal S}(\sigma(A)),$ i.e. the
homeomorphism preserves  convex combinations of states.

\begin{theo}\label{17}
Let $(A, \sigma)$ be a state BL-algebra. Then the set of
$\sigma$-compatible states ${\mathcal S}_{\rm com}(A,\sigma) \ne
\emptyset$ and it is affinely homeomorphic with the set ${\mathcal
S}(\sigma(A))$ of all states on the BL-algebra $\sigma(A).$
\end{theo}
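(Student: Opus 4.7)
The plan is to construct mutually inverse affine continuous maps
$$
\Phi : \mathcal{S}(\sigma(A)) \longrightarrow \mathcal{S}_{\rm com}(A,\sigma), \qquad
\Psi : \mathcal{S}_{\rm com}(A,\sigma) \longrightarrow \mathcal{S}(\sigma(A)),
$$
and then derive the nonemptiness as a consequence. Define $\Phi(s)(x) := s(\sigma(x))$ for $x \in A$ and $\Psi(t) := t\!\restriction_{\sigma(A)}$. Proposition \ref{9} already shows that $\Phi(s)$ is a state on $A$; it is $\sigma$-compatible by inspection, since whenever $\sigma(x)=\sigma(y)$ we have $\Phi(s)(x)=s(\sigma(x))=s(\sigma(y))=\Phi(s)(y)$. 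For $\Psi$, note that $\sigma(A)$ is a BL-subalgebra of $A$ by Lemma \ref{3new}(k), so the restriction of any state on $A$ to $\sigma(A)$ is automatically a state on $\sigma(A)$.

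Next I would check that $\Phi$ and $\Psi$ are mutually inverse. For $s \in \mathcal{S}(\sigma(A))$ and $y = \sigma(x) \in \sigma(A)$, Lemma \ref{3new}(j) gives $\sigma(\sigma(x))=\sigma(x)$, so $\Psi(\Phi(s))(y) = \Phi(s)(\sigma(x)) = s(\sigma(\sigma(x))) = s(\sigma(x)) = s(y)$. Conversely, for $t \in \mathcal{S}_{\rm com}(A,\sigma)$ and $x \in A$, the identity $\sigma(\sigma(x)) = \sigma(x)$ together with $\sigma$-compatibility forces $t(\sigma(x)) = t(x)$; hence $\Phi(\Psi(t))(x) = \Psi(t)(\sigma(x)) = t(\sigma(x)) = t(x)$. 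This simultaneously shows that $\Psi$ does land in $\mathcal{S}(\sigma(A))$ and that the correspondence is bijective.

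Affinity of both maps is immediate from the pointwise definition of convex combinations: $\Phi(\lambda s_1 + (1-\lambda) s_2)(x) = \lambda s_1(\sigma(x)) + (1-\lambda) s_2(\sigma(x))$, and similarly for $\Psi$. For the topological part, the weak topology on states is the topology of pointwise convergence. If $s_\alpha \to s$ pointwise on $\sigma(A)$, then $\Phi(s_\alpha)(x) = s_\alpha(\sigma(x)) \to s(\sigma(x)) = \Phi(s)(x)$ for every $x \in A$, so $\Phi$ is continuous; and if $t_\alpha \to t$ pointwise on $A$ then in particular on $\sigma(A)$, so $\Psi$ is continuous as well.

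Finally, nonemptiness of $\mathcal{S}_{\rm com}(A,\sigma)$ reduces via $\Phi$ to nonemptiness of $\mathcal{S}(\sigma(A))$, which is guaranteed by Remark \ref{re:maxim}: $\sigma(A)$ is a BL-algebra, so picking any maximal filter $F$ of $\sigma(A)$ yields the state-morphism $y \mapsto y/F$. The main (though minor) subtlety in the argument is the $\sigma$-idempotency trick $\sigma \circ \sigma = \sigma$, which is what makes both compositions collapse to the identity; once this observation is in hand, the rest of the verification is routine.
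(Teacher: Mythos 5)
Your proof is correct and follows essentially the same route as the paper: the same pair of mutually inverse affine continuous maps $\Phi(s)=s\circ\sigma$ and its inverse, with $\sigma$-compatibility and the idempotency $\sigma\circ\sigma=\sigma$ doing all the work, and nonemptiness pulled back from ${\mathcal S}(\sigma(A))$. The only (welcome) streamlining is that you identify the inverse map as plain restriction to the BL-subalgebra $\sigma(A)$, which makes the Bosbach axioms for $\Psi(t)$ automatic, whereas the paper defines $\phi(s)(\sigma(x)):=s(x)$ and verifies $(BS1)$ by hand using $(5)_{BL}$ and Lemma \ref{3new}(j); since the two maps coincide on compatible states, nothing is lost.
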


\begin{proof}  We define $\psi:{\mathcal S}(\sigma(A)) \to {\mathcal S}_{\rm
com}(A,\sigma)$  as follows: $\psi(s')(x)=s'(\sigma(x))$ for all $s'
\in \mathcal S(\sigma(A)),$ $x \in A.$ By  Proposition \ref{9},
$\psi(s')=s'_{\sigma}$ is a Rie\v{c}an state on $A.$ The mapping
$\psi(s')$ is a $\sigma$-compatible Rie\v{c}an state on $A$ since if
$\sigma(x)=\sigma(y),$ then
$\psi(s')(x)=s'(\sigma(x))=s'(\sigma(y))=\psi(s')(y).$  Hence,
${\mathcal S}_{\rm com}(A,\sigma) \ne \emptyset.$

Now we define a mapping $\phi: {\mathcal S}_{\rm com}(A,\sigma) \to
{\mathcal S}(\sigma(A))$ by $\phi(s)(\sigma(x)):=s(x)$ for any $x
\in A,$ where $s$ is a $\sigma$-compatible state on $A.$

We first prove that $\phi(s)$  is well defined since if
$\sigma(x)=\sigma(y),$ then by definition $s(x)=s(y),$ so
$\phi(s)(\sigma(x))=\phi(s)(\sigma(y)).$

We prove now condition $(BS1):$ Let $x,y\in \sigma(A),$ so
$x=\sigma(x)$ and $y=\sigma(y)$ according to $(l)$ from Lemma
\ref{3new}. We have:
\begin{eqnarray*}
\phi(s)(x)+\phi(s)(x\rightarrow y)&=&\phi(s)(\sigma(x))+\phi(s)(\sigma(x)\rightarrow \sigma(y))\\
&=&\phi(s)(\sigma(\sigma(x)))+\phi(s)(\sigma(\sigma(x)\rightarrow
\sigma(y)))\\
&=&s(\sigma(x))+s(\sigma(x)\rightarrow \sigma(y))\\
&=&s(\sigma(y))+s(\sigma(y)\rightarrow \sigma(x))\\
&=&\phi(s)(\sigma(\sigma(y)))+\phi(s)(\sigma(\sigma(y)\rightarrow \sigma(x)))\\
&=&\phi(s)(\sigma(y))+\phi(s)(\sigma(y)\rightarrow \sigma(x))
=\phi(s)(y)+\phi(s)(y\rightarrow x).
\end{eqnarray*}
We used axiom $(5)_{BL},$ property $(j)$ of Lemma \ref{3new} and the
fact that $s$ is a state.

Now we check condition $(BS2).$\\
$\phi(s)(0)=\phi(s)(\sigma(0))=s(0)=0$ and similarly it follows that $\phi(s)(1)=1.$\\
Thus $\phi(s)$ is a state on $\sigma(A).$

Finally, $\phi\circ \psi= \rm id_{{\mathcal S}\sigma(A)}$ since
$\phi(\psi(s'))(\sigma(x))=\psi(s')(x)=s'(\sigma(x)).$ Also,
$\psi\circ \phi=\rm id_{{\mathcal S}_{\rm com}(A,\sigma)}$ because
$(\psi\circ \phi)(s)(x)=\phi(s)(\sigma(x))=s(x).$

It is straightforward that both mappings preserve convex
combinations of states, and both are also continuous with respect to
the weak topology of states.
\end{proof}

\begin{cor}\label{co:4.6} Let $(A,\sigma)$ be a state BL-algebra.
Then every $\sigma$-compatible state on $(A,\sigma)$ is  a weak
limit of  a net of convex combinations of extremal
$\sigma$-compatible states on $(A,\sigma)$, and the set of extremal
$\sigma$-compatible states is relatively compact in the weak
topology of states.
\end{cor}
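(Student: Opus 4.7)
The plan is to transport both assertions from $\mathcal S(\sigma(A))$ to $\mathcal S_{\rm com}(A,\sigma)$ through the affine homeomorphism $\psi:\mathcal S(\sigma(A))\to\mathcal S_{\rm com}(A,\sigma)$ built in Theorem \ref{17}. The key observation is that, once $\psi$ is known to be affine and a homeomorphism with respect to the weak topologies, it carries convex combinations to convex combinations, weak limits to weak limits, and extreme points of $\mathcal S(\sigma(A))$ bijectively onto extreme points of $\mathcal S_{\rm com}(A,\sigma)$. Before anything else, I would record these three properties: affinity and bijectivity of $\psi$ are already displayed in the proof of Theorem \ref{17}; continuity in the weak topology follows because, for fixed $x\in A$, the evaluation $s'\mapsto \psi(s')(x)=s'(\sigma(x))$ is a coordinate projection (and symmetrically for $\phi=\psi^{-1}$), which is the defining ingredient of the product topology used to define weak convergence.

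Next, I would invoke the Krein--Mil$'$man theorem in the form already quoted in the paper (\cite[Thm 5.17]{Goo}): every state $s'$ on the BL-algebra $\sigma(A)$ is a weak limit of a net of convex combinations $\sum_{i}\lambda_i^{\alpha} t_i^{\alpha}$ with $t_i^{\alpha}\in\partial_e\mathcal S(\sigma(A))$. Given any $s\in\mathcal S_{\rm com}(A,\sigma)$, set $s':=\phi(s)\in\mathcal S(\sigma(A))$. Express $s'$ as such a weak limit and apply $\psi$ termwise: by affinity $\psi\bigl(\sum_i\lambda_i^{\alpha}t_i^{\alpha}\bigr)=\sum_i\lambda_i^{\alpha}\psi(t_i^{\alpha})$, by weak continuity the limit is preserved, and because $\psi$ sends $\partial_e\mathcal S(\sigma(A))$ onto $\partial_e\mathcal S_{\rm com}(A,\sigma)$ the points $\psi(t_i^{\alpha})$ are extremal $\sigma$-compatible states. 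This gives the required representation of $s$.

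For the second assertion, I would use that $\mathcal S(\sigma(A))$ is a closed subset of the compact Hausdorff space $[0,1]^{\sigma(A)}$ (it is defined by pointwise equalities and inequalities that pass to limits), hence it is compact; consequently $\partial_e\mathcal S(\sigma(A))$ is relatively compact. Transferring through the homeomorphism $\psi$ yields that $\partial_e\mathcal S_{\rm com}(A,\sigma)=\psi(\partial_e\mathcal S(\sigma(A)))$ is relatively compact in the weak topology of states on $(A,\sigma)$. The only nontrivial point in the whole argument is the verification that $\psi$ really maps extreme points to extreme points; this is the standard fact that an affine bijection between convex sets sends extreme points bijectively onto extreme points, and it is where one must be careful to remember that ``extremal $\sigma$-compatible state'' is interpreted inside the convex set $\mathcal S_{\rm com}(A,\sigma)$, not inside the possibly larger set $\mathcal S(A)$.
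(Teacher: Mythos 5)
Your argument is correct and follows essentially the same route as the paper: both transport the Krein--Mil'man decomposition and the relative compactness of the extreme boundary from $\mathcal S(\sigma(A))$ to $\mathcal S_{\rm com}(A,\sigma)$ via the affine homeomorphism of Theorem \ref{17}. Your extra care about why the homeomorphism preserves extreme points and why $\mathcal S(\sigma(A))$ is compact is a welcome elaboration of details the paper leaves implicit, but it is not a different proof.
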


\begin{proof} Due to Theorem \ref{17}, extremal $\sigma$-compatible
states correspond to extremal states on $\sigma(A)$ under some
affine homeomorphism. By the Krein-Mil'man theorem,  \cite[Thm
5.17]{Goo}, every  state on $\sigma(A)$  is a weak limit of  a net
of convex combinations of extremal states on $\sigma(A),$ therefore,
the same is true also for $\sigma$-compatible states on
$(A,\sigma).$ From Theorem \ref{13}, we have that the set of
extremal states on $\sigma(A)$ is relatively compact, therefore, the
same is true for the set of extremal $\sigma$-compatible states on
$(A,\sigma).$
\end{proof}

\begin{rem}\label{re:last}  Let $s$ be a state on a BL-algebra $A.$
Since $\Ker(s)$ is a filter of $A,$ then $A/\Ker(s)$ is an
MV-algebra because $s(a^{--})=s(a)$ and $s(a^{--}\to a)=1$ for any
$a \in A.$ According to \cite{FlMo}, we define the tensor product
$T:=[0,1]\otimes A/\Ker(s)$ in the category of MV-algebras. Then $T$
is generated by elements $\alpha\otimes (a/\Ker(s)),$ where $\alpha
\in [0,1]$ and $a\in A,$ and $A/\Ker(s)$ can be embedded into $T$
via $a/\Ker(s) \mapsto 1\otimes (a/\Ker(s)).$

Let $\mu$ be any state on $A/\Ker(s),$ in particular, $\mu$ can be a
state defined by $a/\Ker(s) \mapsto s(a),$ $(a\in A).$ We define an
operator $\sigma_\mu:T\to T$ by $\sigma_\mu(\alpha\otimes
(a/\Ker(s))) = \alpha \mu((a/\Ker(s)) \otimes (1/\Ker(s)).$ Due to
\cite[Thm 5.3]{FlMo}, $\sigma_\mu$ is always a state-MV-operator on
$T,$ and in view of \cite[Thm 3.1]{DiDV1}, $\sigma_\mu$ is a
state-morphism-operator if and only if $\mu$ is an extremal state.
\end{rem}

\section{Classes of State-Morphism BL-algebras}%7

We characterize some classes of state-morphism BL-algebras, like
simple state BL-algebras, semisimple state BL-algebras, local state
BL-algebras, and perfect state BL-algebras.

\begin{Def}
A state BL-algebra $(A, \sigma)$ is called {\it simple} if
$\sigma(A)$ is simple. We denote by $\mathcal{SSBL}$ the class of
all simple state BL-algebras.
\end{Def}

\begin{rem}
Let  $(A,\sigma)$ be a state BL-algebra. Note that if $A$ is simple,
then $\sigma(A)$ is simple, thus $(A,\sigma)$ is simple.
\end{rem}

\begin{theo}\label{31}
Let $(A, \sigma)$ be a state-morphism BL-algebra. The following are equivalent:\\
$(1)$ $(A, \sigma)\in \mathcal{SSBL};$\\
$(2)$ $\Ker(\sigma)$ is a maximal filter of $A.$
\end{theo}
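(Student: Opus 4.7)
The plan is to prove the equivalence by exploiting two facts: (a) since $\sigma$ is a state-morphism, it preserves $\odot$, so $\sigma(x^n) = \sigma(x)^n$ for all $n$; and (b) by Lemma~\ref{3new}(l), $\sigma$ restricted to $\sigma(A)$ is the identity, so $\Ker(\sigma) \cap \sigma(A) = \{1\}$. Recall also that by Lemma~\ref{100}, ``$\sigma(A)$ is simple'' is equivalent to ``$\sigma(A)$ is locally finite,'' which will give us the order-finiteness needed to invoke Proposition~\ref{27}.

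For $(1) \Rightarrow (2)$, I would first check that $\Ker(\sigma)$ is a (BL-)filter of $A$: closure under $\odot$ uses $\sigma(x\odot y) = \sigma(x)\odot\sigma(y)$, and upward closure uses the monotonicity of $\sigma$ from Lemma~\ref{3new}(c). For maximality I would apply Proposition~\ref{27}: given $x \notin \Ker(\sigma)$, the element $\sigma(x)\in \sigma(A)$ satisfies $\sigma(x) < 1$, and since $\sigma(A)$ is simple, hence locally finite, there exists $n\ge 1$ with $\sigma(x)^n = 0$. Then using the state-morphism property and Lemma~\ref{3new}(b),
\[
\sigma((x^n)^-) = \sigma(x^n)^- = (\sigma(x)^n)^- = 0^- = 1,
\]
so $(x^n)^- \in \Ker(\sigma)$, confirming maximality.

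For $(2) \Rightarrow (1)$, let $F$ be an arbitrary proper filter of $\sigma(A)$; the goal is $F = \{1\}$. I would consider $\sigma^{-1}(F) \subseteq A$. This is a filter of $A$ (again using that $\sigma$ is a morphism for $\odot$ and monotone), and it is proper since $\sigma(0) = 0 \notin F$. Moreover $\Ker(\sigma) = \sigma^{-1}(\{1\}) \subseteq \sigma^{-1}(F)$, so by maximality of $\Ker(\sigma)$ we must have $\sigma^{-1}(F) = \Ker(\sigma)$. Now for any $y \in F \subseteq \sigma(A)$, Lemma~\ref{3new}(l) gives $y = \sigma(y) \in F$, whence $y \in \sigma^{-1}(F) = \Ker(\sigma)$, i.e.\ $\sigma(y) = 1$; combined with $y = \sigma(y)$ this forces $y = 1$. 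Hence $F = \{1\}$, so $\sigma(A)$ has only the two filters $\{1\}$ and $\sigma(A)$ and is simple.

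The main obstacle is a small one: making sure the proof uses the state-morphism hypothesis precisely where it is needed (namely, to obtain the identity $\sigma(x^n) = \sigma(x)^n$ used in the Proposition~\ref{27}-style argument in $(1) \Rightarrow (2)$). Without $(6)_{BL}$ one would only have the inequality $\sigma(x^n) \ge \sigma(x)^n$ from Lemma~\ref{3new}(d), which is insufficient to conclude $(x^n)^- \in \Ker(\sigma)$. The converse direction is essentially a pullback argument and does not depend on this; it would work for any state-operator.
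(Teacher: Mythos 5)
Your proof is correct. The direction $(1)\Rightarrow(2)$ is essentially the paper's argument: simplicity of $\sigma(A)$ gives local finiteness via Lemma~\ref{100}, then $\sigma((x^n)^-)=(\sigma(x)^n)^-=1$ via $(6)_{BL}$ and Lemma~\ref{3new}(b), and Proposition~\ref{27} yields maximality. The converse, however, is genuinely different. The paper runs Proposition~\ref{27} in reverse: from $x\notin\Ker(\sigma)$ it extracts $(\sigma(x)^n)^-\in\Ker(\sigma)$, deduces $\sigma(x)^n=0$, and concludes that $\sigma(A)$ is locally finite, hence simple by Lemma~\ref{100}. You instead argue directly from the definition of simplicity by pulling back an arbitrary proper filter $F$ of $\sigma(A)$: $\sigma^{-1}(F)$ is a proper filter of $A$ containing $\Ker(\sigma)$, so it equals $\Ker(\sigma)$ by maximality, and since $\sigma$ fixes $\sigma(A)$ pointwise (Lemma~\ref{3new}(l)) every $y\in F$ lands in $\Ker(\sigma)$ and must equal $1$. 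Your route buys two things: it bypasses Lemma~\ref{100} and Proposition~\ref{27} entirely in that direction, and, as you correctly observe, it uses only properties valid for arbitrary state-operators (monotonicity, $\sigma(x\odot y)\ge\sigma(x)\odot\sigma(y)$, idempotence on the image), so it makes explicit that $(2)\Rightarrow(1)$ does not depend on the state-morphism hypothesis --- a fact that is obscured in the paper's version, which invokes $\sigma(x^n)=\sigma(x)^n$ even though it could be avoided there too. The paper's version, in exchange, delivers the extra structural information that $\sigma(A)$ is locally finite (hence an MV-chain), which is reused implicitly elsewhere.
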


\begin{proof}
For the direct implication consider $x\notin \Ker(\sigma),$ i.e.
$\sigma(x)<1.$ Since $\sigma(A)$ is simple,  using Lemma \ref{100},
we have that $\ord(\sigma(x))<\infty.$ This means that there exists
$n\in \N$ such that $(\sigma(x))^n=0.$ By $(6)_{BL}$ and negation,
it follows that $\sigma((x^n)^-)=1,$ that is $(x^n)^-\in
\Ker(\sigma).$ By Proposition \ref{27} we obtain that $\Ker(\sigma)$
is maximal filter of $A.$

 Vice-versa, assume
$\Ker(\sigma)$ is a maximal filter of $A.$ Let $\sigma(x)<1,$ then
$\sigma(x)\notin \Ker(\sigma).$ But $\Ker(\sigma)$ is maximal, so by
Proposition \ref{27} we get that there exists $n\in \N$ such that
$((\sigma(x))^n)^-\in \Ker(\sigma),$ thus
$\sigma(((\sigma(x))^n)^-)=1.$ But $\sigma$ is a
state-morphism-operator, so $\sigma(x^n)=\sigma(x)^n.$ Using Lemma
\ref{3new}$(b)$ and $(j)$ we obtain  $\sigma((x^n)^-)=1.$ Since
$\sigma(x^n)\leq \sigma(x^n)^{--}=1^-=0,$ we get $\sigma(x^n)=0.$
This means $\ord(\sigma(x))<\infty$ for any $\sigma(x)\neq 1,$ which
implies that $\sigma(A)$ is simple, using Lemma \ref{100}.
\end{proof}

\begin{Def}
A state BL-algebra $(A, \sigma)$ is called {\it semisimple} if
$\Rad(\sigma(A))=\{1\}.$ We denote by $\mathcal{SSSBL}$ the class of
all semisimple state BL-algebras.
\end{Def}

\begin{theo}\label{31'}
Let $(A, \sigma)$ be a state-morphism BL-algebra. The following are equivalent:\\
$(1)$ $(A, \sigma)\in \mathcal{SSSBL};$\\
$(2)$ $\Rad(A)\subseteq \Ker(\sigma).$
\end{theo}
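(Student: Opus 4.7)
The plan is to exploit the equality $\sigma(\Rad(A))=\Rad(\sigma(A))$ from Proposition \ref{30}, which is available here because every state-morphism-operator is strong by Proposition \ref{pr:3.16}. Once that equality is in hand, the theorem reduces to an almost tautological unpacking of the definition of $\Ker(\sigma)$.

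First I would record the two inputs: (a) by Proposition \ref{pr:3.16}, $\sigma$ is a strong state-operator; (b) by the second part of Proposition \ref{30}, this gives $\sigma(\Rad(A))=\Rad(\sigma(A))$. This is really the only nontrivial ingredient, and it has already been proved.

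For the direction $(1)\Rightarrow(2)$, assume $(A,\sigma)\in\mathcal{SSSBL}$, i.e.\ $\Rad(\sigma(A))=\{1\}$. Then $\sigma(\Rad(A))=\{1\}$ by the equality above, so every $x\in\Rad(A)$ satisfies $\sigma(x)=1$, which is precisely the statement $\Rad(A)\subseteq\Ker(\sigma)$. Conversely, for $(2)\Rightarrow(1)$, assume $\Rad(A)\subseteq\Ker(\sigma)$. Then $\sigma(\Rad(A))\subseteq\{1\}$, and since $1\in\sigma(\Rad(A))$ trivially (as $1\in\Rad(A)$), we get $\sigma(\Rad(A))=\{1\}$. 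Applying the equality of Proposition \ref{30} once more yields $\Rad(\sigma(A))=\{1\}$, i.e.\ $(A,\sigma)\in\mathcal{SSSBL}$.

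There is no real obstacle here; the entire work is done by Proposition \ref{30} combined with Proposition \ref{pr:3.16}. The only thing worth double-checking during the write-up is that the hypothesis ``state-morphism'' is genuinely used (it is, precisely to upgrade the inclusion $\Rad(\sigma(A))\subseteq\sigma(\Rad(A))$ to an equality); without the strong/state-morphism assumption only the implication $(2)\Rightarrow(1)$ would go through by the weaker inclusion of Proposition \ref{30}.
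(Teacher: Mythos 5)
Your proof is correct and follows essentially the same route as the paper: both reduce the theorem to Proposition \ref{30} via $\sigma(\Rad(A))=\Rad(\sigma(A))$ (using that a state-morphism-operator is strong, Proposition \ref{pr:3.16}) and then unpack the definition of $\Ker(\sigma)$. Your closing observation about exactly where the equality (rather than the one-sided inclusion) is needed is accurate and in fact makes the dependence on the state-morphism hypothesis more explicit than the paper's own write-up.
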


\begin{proof}
Assume $(A, \sigma)$ is semisimple, that is $\Rad(\sigma(A))=\{1\}$,
so by Proposition \ref{30}, $\sigma(\Rad(A))=\{1\},$ thus
$\Rad(A)\subseteq \Ker(\sigma).$

Conversely, assume $\Rad(A)\subseteq \Ker(\sigma),$ that means
$\sigma(\Rad(A))=\{1\},$ so $\Rad(\sigma(A))=\{1\},$ thus $(A,
\sigma)$ is semisimple.
\end{proof}

\begin{theo}\label{32}
Let $(A, \sigma)$ be a state BL-algebra. The following are equivalent:\\
$(1)$ $A$ is perfect;\\
$(2)$ $(\forall x\in A,$ $\sigma(x)\in \Rad(A)$ implies $x\in
\Rad(A))$ and $\sigma(A)$ is perfect.
\end{theo}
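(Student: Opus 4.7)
The plan rests on the following key observation: since $\sigma(A)$ is a BL-subalgebra of $A$ (Lemma \ref{3new}(k)) and Proposition \ref{27''} characterises the radical intrinsically by $(x^n)^-\le x$, we have $\Rad(\sigma(A)) = \Rad(A)\cap \sigma(A)$ and $\Rad(\sigma(A))^- = \{z^-\mid z\in\Rad(A)\cap\sigma(A)\}$. A second small fact I need is the identity $a^{---}=a^-$, which holds in any BL-algebra (immediate from $a\le a^{--}$ and antitonicity of $b\mapsto b^-$). Before starting, I record a disjointness lemma: in any non-trivial perfect BL-algebra, $\Rad(A)\cap\Rad(A)^-=\emptyset$. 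Indeed, if $x$ lies in both, then $x^-\le x$ (from $x\in\Rad(A)$, taking $n=1$) and $x\le x^{--}\le x^-$ (from $x\in\Rad(A)^-$, i.e. $x^-\in\Rad(A)$), so $x=x^-$, then $x^2=x\odot x^-=0$, and the radical condition at $n=2$ gives $1=(x^2)^-\le x$, so $x=1=0$.

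For (2)$\Rightarrow$(1), take any $x\in A$ and examine $\sigma(x)\in\sigma(A)$. Perfectness of $\sigma(A)$ places $\sigma(x)$ in $\Rad(\sigma(A))$ or in $\Rad(\sigma(A))^-$. In the first case, $\sigma(x)\in\Rad(A)$ by the key observation, so the hypothesis gives $x\in\Rad(A)$. In the second case, $\sigma(x)^-=\sigma(x^-)$ by Lemma \ref{3new}(b) lies in $\Rad(\sigma(A))\subseteq\Rad(A)$; the hypothesis applied to $x^-$ gives $x^-\in\Rad(A)$, so $x\in\Rad(A)^-$. Thus $A=\Rad(A)\cup\Rad(A)^-$, i.e.\ $A$ is perfect.

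For (1)$\Rightarrow$(2) I split into two parts. Part (a), the implication on $x$: assume $\sigma(x)\in\Rad(A)$ but $x\notin\Rad(A)$; perfectness places $x\in\Rad(A)^-$, hence $x^-\in\Rad(A)$ and therefore $x\le x^{--}\le x^-$. Applying $\sigma$ (monotone by Lemma \ref{3new}(c)) together with Lemma \ref{3new}(b) yields $\sigma(x)\le\sigma(x)^-$, while the reverse inequality $\sigma(x)^-\le\sigma(x)$ is the $n=1$ case of $\sigma(x)\in\Rad(A)$. So $\sigma(x)=\sigma(x)^-$, whence $\sigma(x)^2=\sigma(x)\odot\sigma(x)^-=0$, and the $n=2$ radical condition forces $\sigma(x)=1$, contradicting $\sigma(x)=\sigma(x)^-$ in a non-trivial algebra.

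Part (b), perfectness of $\sigma(A)$: for $y=\sigma(x)\in\sigma(A)$, perfectness of $A$ gives $x\in\Rad(A)$ or $x\in\Rad(A)^-$. If $x\in\Rad(A)$, I claim $\sigma(x)\in\Rad(A)$ (hence $\sigma(x)\in\Rad(A)\cap\sigma(A)=\Rad(\sigma(A))$); otherwise perfectness of $A$ places $\sigma(x)\in\Rad(A)^-$, and the same squeeze as in Part (a)—$\sigma(x)^-\le\sigma(x)$ from applying $\sigma$ to $x^-\le x$, and $\sigma(x)\le\sigma(x)^{--}\le\sigma(x)^-$ from $\sigma(x)^-\in\Rad(A)$—yields $\sigma(x)=\sigma(x)^-$, placing $\sigma(x)$ in $\Rad(A)\cap\Rad(A)^-$, contradicting the disjointness lemma. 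If $x\in\Rad(A)^-$, write $x=z^-$ with $z\in\Rad(A)$; the identity $a^{---}=a^-$ gives $x^{--}=z^{---}=z^-=x$, so $\sigma(x)=\sigma(x)^{--}$, and the first subcase applied to $x^-$ gives $\sigma(x)^-=\sigma(x^-)\in\Rad(\sigma(A))$, so $\sigma(x)=(\sigma(x)^-)^-\in\Rad(\sigma(A))^-$.

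The main obstacle is recognising the recurring "squeeze" $\sigma(x)=\sigma(x)^-$ that drives both parts of (1)$\Rightarrow$(2) and isolating the disjointness lemma $\Rad(A)\cap\Rad(A)^-=\emptyset$ that closes every resulting contradiction; once these are in hand, the argument reduces to the monotonicity of $\sigma$, Lemma \ref{3new}(b), and the intrinsic characterisation of the radical from Proposition \ref{27''}.
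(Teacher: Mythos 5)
Your proof is correct. Its overall skeleton --- the case analysis driven by perfectness, Remark \ref{26'}, and the intrinsic description of the radical in Proposition \ref{27''} --- coincides with the paper's, and your $(2)\Rightarrow(1)$ is essentially the paper's argument verbatim. The genuine divergence is in $(1)\Rightarrow(2)$: the paper reaches its contradiction by applying Corollary \ref{26''} twice (the inequality $x^-\le y^-$ for $x\in\Rad(A)$, $y\in\Rad(A)^-$ in a perfect algebra), whereas you use only the $n=1$ instances of the radical condition together with monotonicity of $\sigma$ and Lemma \ref{3new}(b) to squeeze $\sigma(x)=\sigma(x)^-$, and then the $n=2$ instance (equivalently your disjointness lemma $\Rad(A)\cap\Rad(A)^-=\emptyset$) to finish. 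This buys you a more self-contained argument that never invokes Corollary \ref{26''}; it also forces you to spell out why $\sigma(A)$ is perfect, a point the paper settles with the unproved one-liner that a BL-subalgebra of a perfect BL-algebra is perfect --- your verification via $\Rad(\sigma(A))=\Rad(A)\cap\sigma(A)$ and $a^{---}=a^-$ is exactly the missing detail. Two minor remarks: your disjointness lemma tacitly assumes $A$ nontrivial (harmless, since the trivial algebra satisfies both $(1)$ and $(2)$); and in case 2 of $(2)\Rightarrow(1)$ the step from $x^-\in\Rad(A)$ to $x\in\Rad(A)^-$ strictly only yields $x^{--}\in\Rad(A)^-$ --- but the paper's own proof makes the identical step, so this is not a defect of your argument relative to the original.
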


\begin{proof}
First, let $A$ be a perfect BL-algebra and let $\sigma(x)\in
\Rad(A).$ Assume $x\notin \Rad(A)$, i.e. $x\in \Rad(A)^-,$ so
$x^-\in \Rad(A)$ using Remark \ref{26'}. Then from Corollory
\ref{26''} it follows that $\sigma(x)^-\leq x^-$ and negating it we
get $x^{--}\leq \sigma(x)^{--}.$ Using again Corollory \ref{26''},
we obtain $x^{--}\leq x^-,$ i.e. $\sigma(x)^{--}\leq \sigma(x)^-.$
Thus $\sigma(x)^{--}\leq \sigma(x)^-\leq x^-$ which is a
contradiction, since $\sigma(x)^{--}\in \Rad(A),$ but
$\sigma(x)^-\notin \Rad(A)$ and $\Rad(A)$ is a filter. Thus $x\in
\Rad(A).$
Also, $\sigma(A)$ is perfect, since it is a BL-subalgebra of a perfect BL-algebra.\\
Now we prove the converse implication. Assume $\sigma(A)$ is perfect
and take $x\in A.$ If $\sigma(x)\in \Rad(\sigma(A))\subseteq
\Rad(A),$ then by the hypothesis we get $x\in \Rad(A).$ If
$\sigma(x)\in \Rad(\sigma(A))^-\subseteq \Rad(A)^-,$ then
$\sigma(x)^-=\sigma(x^-)\in \Rad(A),$ and by the hypothesis $x^-\in
\Rad(A),$ and using again Remark \ref{26'}, it follows that $x\in
\Rad(A)^-.$ Thus $A$ is perfect.
\end{proof}

\begin{Def}
Let $(A, \sigma)$ be a state BL-algebra. A state-operator $\sigma$ is called {\it
radical-faithful} if, for every $x\in A,$ $\sigma(x)\in \Rad(A)$
implies $x\in \Rad(A).$
\end{Def}

The first implication of Theorem \ref{32} can be restated in the
following way: Every state-operator on a perfect BL-algebra is
radical-faithful.

\begin{theo}\label{33}
Let $(A, \sigma)$ be a  state-morphism BL-algebra with radical-faithful $\sigma.$ The following are equivalent:\\
$(1)$ $A$ is a local BL-algebra;\\
$(2)$ $\sigma(A)$ is a local BL-algebra.
\end{theo}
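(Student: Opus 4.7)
My plan is to rely throughout on Proposition \ref{27'''}, which characterizes locality of a BL-algebra $B$ by the condition that every $z\in B$ satisfies $\ord(z)<\infty$ or $\ord(z^-)<\infty$. Since $\sigma$ is a state-morphism operator, it preserves $\odot$ (by $(6)_{BL}$) and $^-$ (by Lemma \ref{3new}(b)), so $\sigma(z^n)=\sigma(z)^n$ for every $z\in A$ and $n\ge 1$. This will let me transfer the finite-order condition back and forth between $A$ and $\sigma(A)$.

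For the direction $(1)\Rightarrow(2)$, I will simply note that $\sigma(A)$ is a BL-subalgebra of $A$ by Lemma \ref{3new}(k), so for any $y\in\sigma(A)$ the operations $\odot$ and $^-$, and hence the orders $\ord(y)$ and $\ord(y^-)$, are computed identically in $\sigma(A)$ and in $A$. Therefore the condition of Proposition \ref{27'''} inherited from $A$ automatically holds in $\sigma(A)$, giving $(2)$. No use of radical-faithfulness is needed here.

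For the harder direction $(2)\Rightarrow(1)$, the crucial preparatory observation is that radical-faithfulness of $\sigma$ forces $\Ker(\sigma)\subseteq\Rad(A)$: any $y\in\Ker(\sigma)$ satisfies $\sigma(y)=1\in\Rad(A)$, whence $y\in\Rad(A)$. Now fix $x\in A$ and apply Proposition \ref{27'''} to $\sigma(x)\in\sigma(A)$: we obtain $\sigma(x)^n=0$ or $\sigma(x^-)^n=0$ for some $n\ge 1$. Using $\sigma(z^n)=\sigma(z)^n$, the element $a:=x^n$ (respectively $a:=(x^-)^n$) satisfies $\sigma(a)=0$, hence $\sigma(a^-)=1$, and so $a^-\in\Ker(\sigma)\subseteq\Rad(A)$. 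Proposition \ref{27''} applied to $a^-\in\Rad(A)$ with exponent $k=1$ yields $a^{--}=((a^-)^1)^-\le a^-$; combined with the universal inequality $a\le a^{--}$ this gives $a\le a^-$, and hence $a^2=a\odot a\le a\odot a^-=0$. Thus either $x^{2n}=0$ or $(x^-)^{2n}=0$, so $\ord(x)<\infty$ or $\ord(x^-)<\infty$, and Proposition \ref{27'''} concludes that $A$ is local.

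The decisive and essentially only nontrivial step is the inclusion $\Ker(\sigma)\subseteq\Rad(A)$ supplied by radical-faithfulness, which is what reroutes the finite-order information for $\sigma(x)$ inside $\sigma(A)$ into co-infinitesimality of the relevant negation inside $A$; after that, a short squeeze using the characterization of $\Rad(A)$ as co-infinitesimals (Proposition \ref{27''}) together with the identity $a\odot a^-=0$ closes the argument.
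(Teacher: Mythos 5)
Your proof is correct, and the direction $(2)\Rightarrow(1)$ takes a genuinely different and more elementary route than the paper's. The paper argues via primary filters: it uses $\sigma(\Rad(A))=\Rad(\sigma(A))$ and Proposition \ref{27.1} to show that $\Rad(A)$ is a primary filter, then invokes Proposition \ref{0} to conclude that $A/\Rad(A)$ is local, and finally lifts the unique maximal filter of the quotient back to $A$. You instead work entirely with the order-theoretic characterizations: you extract from radical-faithfulness only the consequence $\Ker(\sigma)\subseteq\Rad(A)$ (since $\sigma(y)=1\in\Rad(A)$ forces $y\in\Rad(A)$), push the finite-order condition for $\sigma(x)$ in $\sigma(A)$ back to $\sigma(x^n)=0$ or $\sigma((x^-)^n)=0$ using $(6)_{BL}$, and then use the co-infinitesimal description of the radical (Proposition \ref{27''}) together with $a\le a^{--}$ and $a\odot a^-=0$ to squeeze out $a^2=0$. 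This avoids quotients and primary filters altogether, and it has the added benefit of making visible that the full strength of radical-faithfulness is not needed for this implication: the weaker hypothesis $\Ker(\sigma)\subseteq\Rad(A)$ already suffices, whereas the paper's argument invokes radical-faithfulness in the stronger form ``$\sigma((x^n)^-)\in\Rad(A)$ implies $(x^n)^-\in\Rad(A)$.'' Your treatment of $(1)\Rightarrow(2)$ (locality of a subalgebra is inherited because $\ord$ is computed identically in $\sigma(A)$ and in $A$) is essentially the paper's, and correctly notes that radical-faithfulness plays no role there.
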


\begin{proof}
First, assume $A$ is local. Then according to Proposition
\ref{27'''}, $\ord(x)<\infty$ or $\ord(x^-)<\infty$ for any $x\in
A,$ i.e. there exists $n\in \N$ such that $x^n=0$ or $(x^-)^n=0,$ so
either $\sigma(x)^n=0$ or $(\sigma(x)^-)^n=0,$ which means
$\sigma(A)$ is local.

Now we prove the converse implication. Assume $\sigma(A)$ is local,
then by Proposition \ref{27.1}, $\Rad(\sigma(A))$ is primary. Let
$(x\odot y)^-\in \Rad(A)$; then, since $\sigma$ is a state-morphism,
we get by (5.3): $\sigma((x\odot y)^-) = (\sigma(x)\odot
\sigma(y))^-\in \sigma(\Rad(A))= \Rad(\sigma(A))\subseteq \Rad(A).$
Therefore, $(\sigma(x)^n)^-\in \Rad(\sigma(A))$ or
$(\sigma(y)^n)^-\in \Rad(\sigma(A))$ for some $n$. We can assume
$(\sigma(x)^n)^-\in \Rad(\sigma(A)),$ that is $\sigma((x^n)^-)\in
\Rad(\sigma(A))\subseteq \Rad(A)$, and since $\sigma$ is a
radical-faithful state-morphism-operator, we get $(x^n)^-\in
\Rad(A).$ Similarly, $(\sigma(y)^n)^-\in \Rad(\sigma(A)) ,$  implies
$(y^n)^-\in \Rad(A).$ Since $\Rad(A)$ is proper, we get $\Rad(A)$ is
primary. Using Proposition \ref{0}, $A/\Rad(A)$ is local, so there
exists a unique maximal filter, say $F,$ in $A/\Rad(A).$  Let
$J=\{x\in A \mid x/\Rad(A)\in F\}.$ Then $J$ is a proper filter of
$A$ containing $\Rad(A).$ To show that $J$ is maximal, we use
Proposition \ref{27}. Thus let $x \in A \setminus J.$  Then
$x/\Rad(A) \notin F$ and locality of $A/\Rad(A)$ yields that there
is an integer $n \ge 1$ such that $x^n/\Rad(A)=0/\Rad(A),$ i.e.
$(x^n)^- \in \Rad(A)\subseteq J $ proving $J$ is maximal.

We claim that there is no other maximal filter $I\ne J$ of $A.$ If
not, there is an element $x\in I\setminus J$ and again there is an
integer $n\ge 1$ such that $(x^n)^-\in \Rad(A)\subseteq I.$ This
gives a contradiction $x^n, (x^n)^- \in I.$ Consequently, $A$ is
local.
\end{proof}

\begin{theo}\label{34}
Let $(A, \sigma)$ be a state-morphism BL-algebra with radical-faithful $\sigma.$ The following statements are equivalent:\\
$(1)$ $(A, \sigma)\in \mathcal{SSBL};$\\
$(2)$ $A$ is a local BL-algebra and $\Ker(\sigma)=\Rad(A).$
\end{theo}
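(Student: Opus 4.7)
The proof is a short combination of the earlier characterizations, and the plan is to handle the two implications by reducing each to a single previous theorem plus a trivial observation about local BL-algebras.

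For $(1)\Rightarrow(2)$, I will start from $(A,\sigma)\in\mathcal{SSBL}$, i.e.\ $\sigma(A)$ is simple. A simple BL-algebra has exactly two filters, $\{1\}$ and the whole algebra, so $\{1\}$ is the unique maximal filter and in particular $\sigma(A)$ is local. Since $\sigma$ is a radical-faithful state-morphism-operator, Theorem~\ref{33} then gives that $A$ itself is local, so $\Rad(A)$ is the unique maximal filter of $A$. On the other hand, Theorem~\ref{31} (applicable because $\sigma$ is a state-morphism-operator) says that $(A,\sigma)\in\mathcal{SSBL}$ is equivalent to $\Ker(\sigma)$ being a maximal filter of $A$. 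But $A$ has only one maximal filter, namely $\Rad(A)$, whence $\Ker(\sigma)=\Rad(A)$.

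For $(2)\Rightarrow(1)$, I will argue even more directly. Since $A$ is local, $\Rad(A)$ is the (unique) maximal filter of $A$; together with the hypothesis $\Ker(\sigma)=\Rad(A)$ this makes $\Ker(\sigma)$ a maximal filter of $A$. Another appeal to Theorem~\ref{31} then immediately yields $(A,\sigma)\in\mathcal{SSBL}$.

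There is essentially no obstacle; the one thing to double-check while writing the proof is that the quoted theorems apply with the stated hypotheses, namely that $\sigma$ is a state-morphism-operator in Theorem~\ref{31}, and a \emph{radical-faithful} state-morphism-operator in Theorem~\ref{33}, both of which are exactly the hypotheses of Theorem~\ref{34}. The radical-faithfulness of $\sigma$ is used only through Theorem~\ref{33} in the $(1)\Rightarrow(2)$ direction.
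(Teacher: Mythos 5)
Your proof is correct, and the backward implication $(2)\Rightarrow(1)$ is exactly the paper's argument. The forward direction differs in one small but genuine way: to establish $\Ker(\sigma)=\Rad(A)$, the paper proves two separate inclusions — $\Ker(\sigma)\subseteq\Rad(A)$ because a proper filter in a local algebra sits inside the unique maximal filter, and $\Rad(A)\subseteq\Ker(\sigma)$ via the radical identity $\sigma(\Rad(A))=\Rad(\sigma(A))=\{1\}$ from Proposition \ref{30} — whereas you get the equality in one step by invoking Theorem \ref{31} (so $\Ker(\sigma)$ is maximal) and then using uniqueness of the maximal filter in the local algebra $A$. Your route is slightly more economical, reusing Theorem \ref{31} in both directions and avoiding equation (5.3) entirely; the paper's route makes the two inclusions explicit and exhibits the role of the radical computation. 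Both are valid under the stated hypotheses, and you correctly identify that radical-faithfulness enters only through Theorem \ref{33}.
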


\begin{proof}
First, assume $(A, \sigma)\in \mathcal{SSBL}.$ Then $\sigma(A)$ is
simple and local and from Theorem \ref{33}, we have $A$ is local.
Thus $\Ker(\sigma)\subseteq \Rad(A).$ Since $\sigma(A)$ is simple
(and also semisimple),  then $\Rad(\sigma(A)) = \{1\}$ and using
$\sigma(\Rad(A)) = \Rad(\sigma(A)) = \{1\}$, we get
$\Rad(A)\subseteq \Ker(\sigma)$. Thus $\Ker(\sigma)=\Rad(A).$

Now assume $A$ is local and $\Ker(\sigma)=\Rad(A).$  Therefore,
$\Ker(\sigma)$ is a maximal filter in $A$ and Theorem \ref{31}
yields that $\sigma(A)$ is simple.

\end{proof}

%\begin{rem}\label{35}
%Let $B$ be a subalgebra of a BL-algebra $A$ and let $\sigma$ be a
%state-operator of $A.$ If $\sigma(B)\subseteq B,$ then $\sigma(B)$
%is a subalgebra of $B.$
%\end{rem}

\setlength{\parindent}{0pt}

\vspace*{3mm}
\begin{flushright}
\begin{minipage}{148mm}\sc\footnotesize
Lavinia Corina Ciungu\\
Polytechnical University of Bucharest
Splaiul Independen\c tei 113, Bucharest, Romania \&\\
State University of New York - Buffalo, 244 Mathematics Building, Buffalo NY, 14260-2900, USA\\
{\it E--mail address}: {\tt lavinia\underline{
}ciungu@mathem.pub.ro, lcciungu@buffalo.edu}\vspace*{3mm}

Anatolij Dvure\v censkij, Marek Hy\v{c}ko\\
Mathematical Institute, Slovak Academy of Sciences, \v Stef\'anikova 49, SK-81473 Bratislava, Slovakia\\
{\it E--mail address}: {\tt
\{dvurecenskij, hycko\}@mat.savba.sk}\vspace*{3mm}

\end{minipage}
\end{flushright}

\end{document}